\newtheorem{theorema}{Theorem}
\newtheorem{thm}{Theorem}[section]
\newtheorem*{thm*}{Theorem}
\newtheorem{prop}[thm]{Proposition}
\newtheorem{lemma}[thm]{Lemma}
\newtheorem{cor}[thm]{Corollary}
\newtheorem{remark}[thm]{Remark}
\theoremstyle{definition}
\newtheorem{mydef}[thm]{Definition}
\newtheorem*{mydef*}{Definition}
\newtheorem{ex}[thm]{Example}
\newcounter{exCnt}
\newcounter{queCnt}
\newcommand{\Z}{\mathbb{Z}}
\newcommand{\Co}{\mathcal{C}}
\newcommand{\set}[2]{\ifthenelse{\equal{#2}{}}{\{ #1 \}}{\{ #1 \mid #2 \}}}
\DeclareMathOperator{\Ext}{Ext}
\DeclareMathOperator{\Hom}{Hom}
\DeclareMathOperator{\HH}{HH}
\DeclareMathOperator{\ide}{id}
\DeclareMathOperator{\Ca}{Ca}
\DeclareMathOperator{\op}{op}
\DeclareMathOperator{\opp}{op}
\DeclareMathOperator{\ev}{ev}
\DeclareMathOperator{\Bicomod}{\text{-}\mathbf{Bicomod}}
\DeclareMathOperator{\Bimod}{\text{-}\mathbf{Bimod}}
\DeclareMathOperator{\Cob}{Cob}
\DeclareMathOperator{\Hh}{H}
\DeclareMathOperator{\EndOp}{\underline{End}}
\DeclareMathOperator{\CoEndOp}{\underline{Coend}}
\DeclareMathOperator{\tr}{tr}
\newcommand\sbu[1][.5]{\mathbin{\vcenter{\hbox{\scalebox{#1}{$\bullet$}}}}}
\newcommand{\ld}[1]{\prescript{*}{}{#1}}
\title{Corings, their dual rings and relative (co)Hochschild cohomology}
\author{Jonathan Lindell}
\begin{document}
\maketitle

\begin{abstract}
  We show for a coring which is finitely generated projective as a left module that the Cartier cohomology is isomorphic to the relative Hochschild cohomology of the right algebra. Furthermore, we show that this isomorphism lifts to the level of \(B_{\infty}\)-algebras of the chain complexes, by showing that the opposite \(B_{\infty}\)-algebra of the relative Hochschild cochains of the right algebra is isomorphic to the \(B_{\infty}\)-algebra of Cartier cochains. Lastly, we apply this to entwining structures where the coalgebra is finite-dimensional, to get a description of the equivariant cohomology of the entwining structure as the relative Hochschild cohomology of the twisted convolution algebra. 
\end{abstract}

\section{Introduction}
\label{sec:introduction}

Hochschild cohomology was first defined by Hochschild in \cite{Hochschild1945}. It was shown by Gerstenhaber in the seminal paper \cite{Gerstenhaber1963} that Hochschild cohomology has the structure of a Gerstenhaber algebra, enconding the deformation theory of the algebra. Dually, Cartier defined in \cite{Cartier1955} a natural cohomology theory for coalgebras using the Ext-functor in the category of bicomodules, called Cartier cohomology or coHochschild cohomology. This was later generalised to corings in \cite{Guzman1989}, which similarly has the structure of a Gerstenhaber algebra, see 30.8 in \cite{CoringsBW}. In \cite{Hochschild1956}, Hochschild defined relative homological algebra and relative Hochschild cohomology using the relative Ext-functor, taking into account the presence of a subalgebra. Gerstenhaber and Schack used relative Hochschild cohomology in the context of deformation theory, see \cite{GerstenhaberSchack1986}. Recent developments \cite{CLMSS2019, CLMS2020, CibilsLanzilottaMarcosSolotar2024, FaitgGainutdinovSchweigert2024}, in the context of adding and deleting arrows to a quiver, Han's conjecture and deformations of tensor categories, has created a growing interest in relative homological algebra and relative Hochschild (co)homology. 

\hfill

Corings were first defined by Sweedler in \cite{Sweedler1975}, and independently by Roiter in \cite{Roiter1979} under the name of bocses in the context of matrix problems. Corings appear naturally in the context of non-commutative descent theory of modules \cite[Chapter 4]{CoringsBW}, entwining structures \cite[Chapter 5]{CoringsBW} and quasi-hereditary algebras \cite{KKO2014} among other applications. Associated to any \(B\)-coring \(\mathcal{C}\), there is the right algebra, also called the opposite of the left dual algebra, see \cite{Sweedler1975} and Section 1 in \cite{BurtButler1991}, which has \(B\) as a canonical subalgebra. In this paper, we study the connection between the Cartier cohomology of a coring and the relative Hochschild cohomology of the associated right algebra. We show the following

\begin{theorema}[Proposition~\ref{prop:extduality}, Theorem~\ref{thm:extduality}]\label{thm:A}
   Let \(\mathcal{C}\) be a \(B\)-coring. Then there is a natural morphism
  \begin{equation*}
    \Ext^{*}_{(\mathcal{C}\text{-}\mathcal{C}|B\text{-}B)}(M,N) \rightarrow \Ext^*_{(A^e|B^e)}(F_R(N), F_R(M))
  \end{equation*}
  of \(k\)-vector spaces. Moreover, if \(M\) and \(N\) are left finitely generated projective as \(B\)-modules, then the map above is an isomorphism. 
\end{theorema}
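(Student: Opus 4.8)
The idea is to produce explicit cochain complexes computing both relative $\Ext$-groups together with a chain map between them induced by $F_R$, and then to promote this chain map to an isomorphism under the finiteness hypothesis. First I would fix the models. On the coring side, the forgetful functor from $\mathcal{C}\text{-}\mathcal{C}$-bicomodules to $B\text{-}B$-bimodules has the right adjoint $\mathcal{C}\otimes_B(-)\otimes_B\mathcal{C}$, the cofree bicomodule, and the associated comonad produces for any $\mathcal{C}$-bicomodule $N$ a relative injective (cobar) resolution $N\to\mathcal{C}\otimes_B N\otimes_B\mathcal{C}\to\mathcal{C}^{\otimes_B 2}\otimes_B N\otimes_B\mathcal{C}^{\otimes_B 2}\to\cdots$; applying $\Hom_{\mathcal{C}\text{-}\mathcal{C}}(M,-)$ and simplifying by the adjunction, $\Ext^{*}_{(\mathcal{C}\text{-}\mathcal{C}|B\text{-}B)}(M,N)$ becomes the cohomology of $\Hom_{B\text{-}B}(M,\mathcal{C}^{\otimes_B\bullet}\otimes_B N\otimes_B\mathcal{C}^{\otimes_B\bullet})$. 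Symmetrically, the forgetful functor from $A^e$-modules to $B\text{-}B$-bimodules has the left adjoint $A\otimes_B(-)\otimes_B A$, whose monad yields the relative projective (bar) resolution $\mathrm{bar}_{\bullet}(F_R(N))$ of $F_R(N)$, so that $\Ext^{*}_{(A^e|B^e)}(F_R(N),F_R(M))$ is the cohomology of $\Hom_{A^e}(\mathrm{bar}_{\bullet}(F_R(N)),F_R(M))\cong\Hom_{B\text{-}B}(A^{\otimes_B\bullet}\otimes_B F_R(N)\otimes_B A^{\otimes_B\bullet},F_R(M))$. The interchange of $M$ and $N$ in the statement is precisely the shadow of the fact that the contravariant $F_R$ turns ``resolve the target by relative injectives'' into ``resolve the source by relative projectives''.

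Next, for the natural morphism of the Proposition, I would apply $F_R$ termwise to the cobar resolution of $N$. Since $F_R$ sends a $\mathcal{C}$-bicomodule to its left $B$-dual equipped with the induced $A$-bimodule structure, and $A$ is the opposite algebra of $\ld{\mathcal{C}}$, there is for every $B\text{-}B$-bimodule $X$ a canonical ``product of duals into the dual of the tensor product'' map $A\otimes_B\ld{X}\otimes_B A\to F_R(\mathcal{C}\otimes_B X\otimes_B\mathcal{C})$ of $A$-bimodules, natural in $X$. Assembling these maps over the simplicial degrees --- and using that the cobar differential, built from the comultiplication of $\mathcal{C}$ and the coactions of $N$, dualizes into the bar differential, built from the multiplication of $A$ and the actions of $F_R(N)$ --- produces a chain map $\mathrm{bar}_{\bullet}(F_R(N))\to F_R(\mathrm{cobar}^{\bullet}(N))$ of augmented complexes over $F_R(N)$. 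Functoriality of the contravariant $F_R$ gives a chain map $\Hom_{\mathcal{C}\text{-}\mathcal{C}}(M,\mathrm{cobar}^{\bullet}(N))\to\Hom_{A^e}(F_R(\mathrm{cobar}^{\bullet}(N)),F_R(M))$, and precomposing with the comparison map lands it in $\Hom_{A^e}(\mathrm{bar}_{\bullet}(F_R(N)),F_R(M))$; passing to cohomology yields the desired natural morphism.

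For the isomorphism statement I would assume $\mathcal{C}$, $M$ and $N$ to be left finitely generated projective over $B$ --- so that every $B\text{-}B$-bimodule occurring at any simplicial degree of either resolution is finitely generated projective over $B$, hence reflexive --- and check that both ingredients become isomorphisms. The comparison map $A\otimes_B\ld{X}\otimes_B A\to F_R(\mathcal{C}\otimes_B X\otimes_B\mathcal{C})$ is an isomorphism as soon as $\mathcal{C}$ and $X$ are finitely generated projective over $B$, so $\mathrm{bar}_{\bullet}(F_R(N))\to F_R(\mathrm{cobar}^{\bullet}(N))$ is an isomorphism of complexes; and, unwinding the adjunctions, the map induced by $F_R$ is in each degree identified with the dualization map $\Hom_{B\text{-}B}(M,X)\to\Hom_{B\text{-}B}(\ld{X},\ld{M})$, $f\mapsto\ld{f}$, which is an isomorphism since $M$ and $X$ are finitely generated projective over $B$. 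Hence the composite chain map is a levelwise isomorphism and induces the asserted isomorphism on cohomology.

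The step I expect to be the main obstacle is twofold. First, verifying that $\mathrm{bar}_{\bullet}(F_R(N))\to F_R(\mathrm{cobar}^{\bullet}(N))$ really is a chain map: one must match, through the several dualizations and the passage to opposite algebras, the bar differential of $A$ against the dual of the cobar differential of $\mathcal{C}$, keeping careful track of all the intervening $B$-bimodule structures and of the oplax monoidality of the duality functor. Second, pinning down exactly which $B$-modules must be finitely generated projective, and on which side, so that the ``product of duals'' and double-duality maps are isomorphisms simultaneously in every simplicial degree --- in particular whether the hypothesis on $\mathcal{C}$ is really needed or can be absorbed into that on $M$ and $N$ by a more economical choice of resolution --- together with the routine but necessary check that the comonad and monad resolutions are honest relative injective, respectively projective, resolutions, so that the complexes above genuinely compute the relative $\Ext$-groups.
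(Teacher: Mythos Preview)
Your plan is correct and follows the same overall architecture as the paper: apply $F_R$ to a relative injective resolution of $N$, compare with a relative projective resolution of $F_R(N)$, and then upgrade the comparison to an isomorphism under the finiteness hypothesis. The execution, however, is noticeably different. For the natural morphism (Proposition~\ref{prop:extduality}) the paper does \emph{not} construct your explicit ``product of duals'' chain map $\mathrm{bar}_{\bullet}(F_R(N))\to F_R(\mathrm{cobar}^{\bullet}(N))$; instead it observes abstractly that $F_R$ preserves relative exactness (since $\Hom_{B\text{-}}(-,B)$ is additive), takes \emph{any} relative injective resolution $I^{\bullet}$ of $N$ and \emph{any} relative projective resolution $P_{\bullet}$ of $F_R(N)$, and produces the comparison map by lifting $\ide_{F_R(N)}$ through $P_{\bullet}\to F_R(I^{\bullet})$ using relative projectivity. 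For the isomorphism (Theorem~\ref{thm:extduality}) the paper again avoids your degreewise computation: it shows that on left finitely generated projective objects $F_R$ restricts to a genuine duality $D$ (Lemma~\ref{lemma:dualityfgp}) and that $D$ intertwines the cofree-bicomodule functor $\mathcal{C}\otimes_B(-)\otimes_B\mathcal{C}$ with the induced-bimodule functor $R\otimes_B(-)\otimes_B R$ (Lemma~\ref{lemma:injtoproj}), so $D(I^{\bullet})$ is \emph{itself} a relative projective resolution and the Hom-isomorphism is immediate from the duality. Your explicit approach buys you a concrete chain-level formula and makes the role of the $\gamma$-maps transparent (which is exactly what the paper later exploits in Section~4 for the $B_\infty$-statement), at the price of the bookkeeping you flag in your ``main obstacle''. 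The paper's softer argument sidesteps that bookkeeping entirely but gives less explicit control. Finally, your instinct about the hypotheses is on point: the paper does impose that $\mathcal{C}$ itself be left finitely generated projective over $B$ (stated in Theorem~\ref{thm:extduality}, though suppressed in the introductory Theorem~A), precisely so that the duality and the $\gamma$-isomorphisms are available in every simplicial degree.
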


For \(M = N = \mathcal{C}\), we show that this lifts to the \(B_{\infty}\)-level. We use \(-^{\opp}\) to denote the opposite of \(B_{\infty}\)-algebras as defined in \cite{ChenLiWang2021} and for Gerstenhaber algebras as defined in Section 2.2.

\begin{theorema}[Theorem~\ref{thm:coringtoalg}]\label{thm:B}
  Let \(\mathcal{C}\) be a coring and let \(R^{\op}\) be the opposite of the right algebra for \(\mathcal{C}\). Then there is a \(B_{\infty}\)-morphism 
  \begin{equation*}
    C_{\Ca}^*(\mathcal{C}) \rightarrow C^*(R^{\op}| B^{\op}) \cong C^*(R|B)^{\opp}
  \end{equation*}
  from the \(B_{\infty}\)-algebra given by the Cartier cochain complex to the opposite \(B_{\infty}\)-algebra given by the relative Hochschild cochain complex, which thus induces an morphism of Gerstenhaber algebras
  \begin{equation*}
    \Hh_{\Ca}^*(\mathcal{C}) \rightarrow \HH^*(A|B)^{\opp}.
  \end{equation*}
  Moreover, if \(\mathcal{C}\) is left finitely generated as \(B\)-module, then the maps above are isomorphisms. 
\end{theorema}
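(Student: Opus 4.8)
The plan is to present both cochain complexes as arising from operads with multiplication in the sense of Gerstenhaber--Voronov, and to verify that linear duality along the left $B$-module $\mathcal{C}$ is a morphism of such data. Recall that the Cartier cochains, $C_{\Ca}^{n}(\mathcal{C}) = \Hom_{B\text{-}B}(\mathcal{C},\mathcal{C}^{\otimes_{B}n})$, carry the co-endomorphism operad structure of $\mathcal{C}$ --- operad composition inserting tensor factors through the inner cochains --- together with the comultiplication $\Delta\in C_{\Ca}^{2}(\mathcal{C})$ as multiplication, $\Delta$ being a multiplication precisely by coassociativity and counitality; this is the data underlying the $B_{\infty}$-structure on $C_{\Ca}^{*}(\mathcal{C})$. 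Dually, the relative Hochschild cochains $C^{n}(R^{\op}|B^{\op}) = \Hom_{B\text{-}B}\bigl((R^{\op})^{\otimes_{B}n},R^{\op}\bigr)$ carry the endomorphism operad structure of $R^{\op}$ together with its product $\mu_{R^{\op}}\in C^{2}$ as multiplication, where $R^{\op}={}^{*}\mathcal{C}$ is the left dual ring and $B^{\op}$ its canonical subalgebra.

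First I would define, for each $n$, the map sending $f\in\Hom_{B\text{-}B}(\mathcal{C},\mathcal{C}^{\otimes_{B}n})$ to the composite of ${}^{*}f\colon{}^{*}(\mathcal{C}^{\otimes_{B}n})\to{}^{*}\mathcal{C}$ with the canonical map $({}^{*}\mathcal{C})^{\otimes_{B}n}\to{}^{*}(\mathcal{C}^{\otimes_{B}n})$, landing in $C^{n}(R^{\op}|B^{\op})$; this canonical map is an isomorphism exactly when ${}_{B}\mathcal{C}$ is finitely generated projective. That the resulting family is a chain map is the chain-level content of Theorem~\ref{thm:A} for $M=N=\mathcal{C}$, with the functor $F_{R}$ there being this linear duality. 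Next I would check that $\{{}^{*}(-)\}_{n}$ is a morphism of operads with multiplication: it carries $\Delta$ to $\mu_{R^{\op}}$ by the defining formula for the dual-ring product, and it intertwines the operad compositions by the naturality and (lax) monoidality of the duality functor ${}^{*}(-)$, the precise matching of the dual slots being the delicate point addressed below. The Gerstenhaber--Voronov construction then upgrades this to a $B_{\infty}$-morphism $C_{\Ca}^{*}(\mathcal{C})\to C^{*}(R^{\op}|B^{\op})$; composing with the identification $C^{*}(R^{\op}|B^{\op})\cong C^{*}(R|B)^{\opp}$ --- the general fact that the Hochschild $B_{\infty}$-algebra of an opposite algebra is the opposite $B_{\infty}$-algebra, in the sense of \cite{ChenLiWang2021}, restricted to the relative complexes since $B^{\op}\hookrightarrow R^{\op}$ --- yields the displayed morphism, and passing to cohomology gives the morphism of Gerstenhaber algebras $\Hh_{\Ca}^{*}(\mathcal{C})\to\HH^{*}(A|B)^{\opp}$, as any $B_{\infty}$-morphism induces one on the underlying Gerstenhaber algebras.

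For the final clause, when ${}_{B}\mathcal{C}$ is finitely generated, hence finitely generated projective, each map $\Hom_{B\text{-}B}(\mathcal{C},\mathcal{C}^{\otimes_{B}n})\to\Hom_{B\text{-}B}\bigl(({}^{*}\mathcal{C})^{\otimes_{B}n},{}^{*}\mathcal{C}\bigr)$ above is bijective by the isomorphism part of Theorem~\ref{thm:A}, so the linear component of the $B_{\infty}$-morphism is an isomorphism; a $B_{\infty}$-morphism whose linear component is invertible is an isomorphism of $B_{\infty}$-algebras (one inverts it component by component, by induction on arity), whence the induced Gerstenhaber morphism is an isomorphism as well. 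The step I expect to be the main obstacle is the bookkeeping in the operad-morphism verification: linear duality reverses the order of tensor factors, so this reversal and the Koszul signs it drags along must be tracked carefully through the iterated operad compositions; getting these orientations right is exactly what pins down that the target is $C^{*}(R^{\op}|B^{\op})$ --- equivalently $C^{*}(R|B)^{\opp}$, and not $C^{*}(R|B)$ --- and it is the only genuinely delicate point in the argument.
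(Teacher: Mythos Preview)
Your proposal is correct and follows essentially the same route as the paper: both present the Cartier and relative Hochschild cochains as the coendomorphism and endomorphism operads with multiplication, define the comparison map via left duality composed with the canonical map $\gamma\colon ({}^{*}\mathcal{C})^{\otimes_B n}\to {}^{*}(\mathcal{C}^{\otimes_B n})$, verify that this is a morphism of operads with multiplication, invoke the Gerstenhaber--Voronov functor to obtain a $B_{\infty}$-morphism, and then compose with the Chen--Li--Wang identification $C^{*}(R^{\op}|B^{\op})\cong C^{*}(R|B)^{\opp}$.

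Two small remarks. First, the paper does not leave the operad-morphism verification at the level of ``naturality and lax monoidality'': it isolates the key identity $D(g^{i,m})\circ\gamma = \gamma\circ[D(g)\circ\gamma]_{i,m}$ as a separate lemma and checks it by an explicit element-wise computation; this is exactly the bookkeeping you flag as the delicate point, and the reversal of tensor factors under $\gamma$ is what forces the target to be $\EndOp(R^{\op})$ rather than $\EndOp(R)$. Second, because the comparison arises from a morphism of operads with multiplication, the resulting $B_{\infty}$-morphism is \emph{strict} (only $f_{1}$ is nonzero), so in the finitely generated projective case the isomorphism follows immediately from invertibility of each degree-wise map $f\mapsto D_B(f)\circ\gamma$; your induction-on-arity argument for inverting a general $B_{\infty}$-morphism with invertible linear part is correct but unnecessary here.
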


As an application we get that the one-to-one correspondence shown in \cite{BKK2020} between quasi-hereditary algebras with homological exact Borel subalgebras and directed corings is compatible with Cartier cohomology and relative Hochschild cohomology.

\hfill

Entwining structures were first defined by Majid and Brezi\'nski \cite{BrzezinskiMajid1998} in the context of non-commmutative geometry. Moreover, it turned out that entwining structures unify several structures such as Hopf modules, relative Hopf modules, Yetter-Drinfeld modules and Doi-Hopf modules, see Chapter 5 in \cite{CoringsBW}. Given an entwining structure \((A,C,\psi)\), Brzezi\'nski defined the equivariant cohomology \(\Hh_{\psi-e}(C)\), see Section 5 in \cite{Brzezinski2001} for the dual version and Chapter 5 in \cite{CoringsBW}. As an application we prove that
\begin{theorema}\label{thm:C}
  Let \((A,C,\psi)\) be an entwining structure such that \(C\) is finite-dimensional. Then there is an isomorphism of \(B_{\infty}\)-algebras
  \begin{equation*}
    C^*_{\psi-e}(C) \cong C^*(\Hom_{\psi}(C,A)|A)^{\opp} \cong C^*((C^{\vee})^{\op} \# A | A)^{\op}
  \end{equation*}
  which thus induces an isomorphism of Gerstenhaber algebras
  \begin{equation*}
    \Hh^*_{\psi-e}(C) \cong \HH^*(\Hom_{\psi}(C,A)|A)^{\opp} \cong  \HH^*((C^{\vee})^{\op} \# A | A)^{\op}.
  \end{equation*}
\end{theorema}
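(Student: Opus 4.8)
The plan is to deduce Theorem~\ref{thm:C} from Theorem~\ref{thm:B} via the standard dictionary between entwining structures and corings. First I would attach to the entwining structure $(A,C,\psi)$ its associated $A$-coring $\mathcal{C}=A\otimes C$, whose comultiplication and counit are induced from those of $C$ and whose $A$-bimodule structure is twisted by $\psi$, as in \cite[Chapter~5]{CoringsBW}. Since $C$ is finite-dimensional, $\mathcal{C}=A\otimes C$ is free of finite rank as a left $A$-module, in particular left finitely generated projective, so Theorem~\ref{thm:B} applies with base ring $B=A$.

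Next I would match up the objects in the statement. On the coalgebra side, Brzezi\'nski's equivariant cochain complex $C^*_{\psi-e}(C)$ is --- essentially by its definition in \cite{Brzezinski2001} and \cite[Chapter~5]{CoringsBW} --- the Cartier cochain complex $C_{\Ca}^*(\mathcal{C})$ of the associated coring; what must be checked is that this identification respects the full $B_\infty$-structure (the brace operations as well as the cup product), which comes down to transporting the defining formulas along $\mathcal{C}\cong A\otimes C$. On the algebra side, I would compute the right algebra of $\mathcal{C}=A\otimes C$: it is the opposite of the left dual $\Hom_{A}(\mathcal{C},A)$, and the left $A$-linear isomorphism $\mathcal{C}\cong A\otimes C$ gives $\Hom_{A}(\mathcal{C},A)\cong\Hom_k(C,A)$; a direct computation identifies the transported product with the $\psi$-twisted convolution product, so the right algebra is $\Hom_\psi(C,A)$, with canonical subalgebra $A$. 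Feeding this into Theorem~\ref{thm:B} gives
\begin{equation*}
  C^*_{\psi-e}(C)\;\cong\;C_{\Ca}^*(\mathcal{C})\;\cong\;C^*(\Hom_\psi(C,A)^{\op}\,|\,A^{\op})\;\cong\;C^*(\Hom_\psi(C,A)\,|\,A)^{\opp},
\end{equation*}
which is the first claimed isomorphism; passing to cohomology gives the corresponding Gerstenhaber-algebra statement.

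For the remaining isomorphism I would use finite-dimensionality of $C$ once more to dualize: $\Hom_k(C,A)\cong C^\vee\otimes A$ as $k$-modules, and under this identification the $\psi$-twisted convolution product corresponds to the multiplication of a smash product built from the dual algebra $C^\vee$ and $A$, with $C^\vee$ acting on $A$ via $\psi$ --- namely, up to taking opposites, $(C^\vee)^{\op}\#A$. Combining this with the general compatibility of the $B_\infty$-opposite with passage to the opposite algebra (in the form $C^*(S\,|\,B)^{\opp}\cong C^*(S^{\op}\,|\,B^{\op})$, cf.\ \cite{ChenLiWang2021}) rewrites the relative Hochschild $B_\infty$-algebra of the twisted convolution algebra as $C^*((C^\vee)^{\op}\#A\,|\,A)^{\op}$, which finishes the chain of $B_\infty$-isomorphisms; the Gerstenhaber statement again follows by taking cohomology.

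I expect the main obstacle to be the verification in the second paragraph that the equivariant complex agrees with the Cartier complex of $\mathcal{C}=A\otimes C$ \emph{together with all of its brace operations} --- that is, as $B_\infty$-algebras and not merely up to quasi-isomorphism --- since the equivariant theory is usually recorded only as a cohomology theory and its higher structure is not spelled out. A close second is the bookkeeping of the various opposites (algebra opposite versus $B_\infty$-opposite, and on which tensor factor) that is needed to land exactly on the stated form $C^*((C^\vee)^{\op}\#A\,|\,A)^{\op}$; getting the handedness of the $C^\vee$-action and of the smash product right is where sign or side errors would most easily arise.
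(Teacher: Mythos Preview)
Your proposal is correct and follows exactly the paper's route: the paper proves Theorem~\ref{thm:C} (restated as Theorem~\ref{thm:equivariantcohomology}) by combining Lemma~\ref{lemma:rightalgebraent} (right algebra of the associated coring is $\Hom_\psi(C,A)$), Lemma~\ref{lemma:smashproduct} ($(C^\vee)^{\op}\#A\cong\Hom_\psi(C,A)$), Lemma~\ref{lemma:fingenproj} ($A\otimes C$ is left fgp when $C$ is finite-dimensional), and Theorem~\ref{thm:coringtoalg}. The ``main obstacle'' you anticipate is in fact empty here: in this paper the $\psi$-equivariant cochain complex is \emph{defined} to be the Cartier cochain complex of the associated coring (Definition preceding Lemma~\ref{lemma:fingenproj}), so the $B_\infty$-identification $C^*_{\psi-e}(C)=C^*_{\Ca}(\mathcal{C})$ is tautological and no transport of brace operations needs to be checked.
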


As a corollary we obtain, in the case where the coalgebra is finite-dimensional, that the equivariant cohomology controls the deformation theory of the \(\psi\)-twisted convolution algebra \(\Hom_{\psi}(C,A)\), see Corollary~\ref{cor:deformation}. Lastly, we compute for trivial entwining structures the \(\psi\)-equivariant cohomology.  

\subsection*{Outline}
\label{sec:outline}

We begin by recalling \(A_{\infty}\)- and \(B_{\infty}\)-algebras, operads, relative Hochschild cohomology, Cartier cohomology and relative deformation theory in Section 2. In Section 3 we describe the connection between relative Ext on the category of bicomodules and the category of bimodules over the right algebra, and prove Theorem~\ref{thm:A}. In Section 4 we show that this lifts to the \(B_{\infty}\)-level and prove Theorem~\ref{thm:B}. Lastly, in Section 5, we recall entwining structures and prove Theorem~\ref{thm:C}. 

\subsection*{Acknowledgements.} I would like to thank Julian K\"ulshammer for his support and for making several important suggestions. I would also like to thank Lleonard Rubio y Degrassi for his support and several important discussions.

\section{Preliminaries}
\label{sec:background}

We start by recalling the definitions of and basic results for \(A_{\infty}\)-algebras, \(B_{\infty}\)-algebras and operads. We mainly follow the exposition in \cite{ChenLiWang2021} for \(A_{\infty}\)- and \(B_{\infty}\)-algebras and the exposition in \cite{GerstenhaberVoronov1995} for operads and their connection to \(B_{\infty}\)-algebras. Throughout, let \(k\) be a field.

\subsection{\(A_{\infty}\)-algebras and morphisms}

We recall the definition of and basic results for \(A_{\infty}\)-algebras.  

\begin{mydef}
  Let \(A = \oplus_{n \in \Z}A^n\) be a \(\Z\)-graded vector space. We say that \((A^n,m_n)_{n \in \Z}\) is an \(A_{\infty}\)\textit{-algebra} if there are \(k\)-linear graded maps
  \begin{equation*}
    m_n : A^{\otimes n} \rightarrow A
  \end{equation*}
  of degree \(2-n\) that satisfy the following relations
  \begin{equation*}
    \sum_{j=0}^{n-1}\sum_{s=1}^{n-j}(-1)^{j+s(n-j-s)}m_{n-s+1}(\ide^{\otimes j} \otimes m_s \otimes \ide^{\otimes n-j-s}) = 0
  \end{equation*}
  for \(n \geq 1\). Note in particular that this means that \(m_1\) is a differential. We say that \(A\) is \textit{strictly unital} if there exists an element \(1 \in A^0\) such that \(m_2(1,a) = m_2(a,1) = a\) for all \(a \in A\).  
\end{mydef}

Let \(A = \oplus_{p \in \Z} A^p\) be a graded vector space, then we define the \textit{suspension} of \(A\) to be the graded vector space \(sA\), where \(sA^p = A^{p+1}\).
\begin{mydef}
  Let \(A\) be a graded vector space. The \textit{tensor coalgebra} is the graded coalgebra
  \begin{equation*}
    T^c_k(A) := k \oplus A \oplus A^{\otimes 2} \oplus ...
  \end{equation*}
  where the comultiplication is given by deconcatination and the counit is given by projection onto \(k\). 
\end{mydef}

\begin{lemma}\label{lemma:lifting}\hfill
  \begin{enumerate}
  \item Let \(f : V^{\otimes n} \rightarrow V\) be a map of graded vector spaces of degree \(2-n\). This can be viewed as a map \(f: T^c(V) \rightarrow V\) by letting \(f\) be zero on all other components. Then the map \(f\) lifts to a coderivation \(\widetilde{f} : T^c(V) \rightarrow T^c(V)\) by
    \begin{equation*}
      \widetilde{f}(v_1,...,v_k) =
      \begin{cases}
        0 &\text{ if } k < n \\
        \sum_{i=0}^{k-n} (-1)^{(n-1)\cdot (|v_1| + ... + |v_i|)}(v_1,...,f(v_{i+1},...,v_{i+n}),...,v_k) &\text{ otherwise.}
      \end{cases}
    \end{equation*}
  \item There is a one-to-one correspondence between coderivations
    \begin{equation*}
      \sigma : T^c(V) \rightarrow T^c(V)
    \end{equation*}
    and systems of maps \(\{f_i : V^{\otimes i} \rightarrow V\}_{i \geq 0}\) given by \(\sigma = \sum \widetilde{f_i}\). 
  \end{enumerate}
\end{lemma}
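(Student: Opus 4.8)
The plan is to treat this as the classical fact that the tensor coalgebra $T^c(V)$ is cofree on $V$ among conilpotent coalgebras, so that coderivations are freely determined by their corestriction to the space of cogenerators $V$. Part~(1) is a direct verification that the displayed formula defines a coderivation, and part~(2) then follows by showing that the assignment $\sigma \mapsto \{f_i\}$ given by corestriction is inverse to $\{f_i\} \mapsto \sum \widetilde{f_i}$.

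For part~(1), recall that the comultiplication on $T^c(V)$ is deconcatenation,
\[
  \Delta(v_1, \ldots, v_k) = \sum_{i=0}^{k} (v_1, \ldots, v_i) \otimes (v_{i+1}, \ldots, v_k),
\]
and that $\widetilde{f}$ being a coderivation means $\Delta \circ \widetilde{f} = (\widetilde{f} \otimes \ide + \ide \otimes \widetilde{f}) \circ \Delta$. Applying both sides to $(v_1, \ldots, v_k)$ and expanding, each side becomes a sum of terms of the form $(v_1, \ldots, f(v_{a+1}, \ldots, v_{a+n}), \ldots, v_b) \otimes (v_{b+1}, \ldots, v_k)$, together with the analogous terms carrying the $f$-block in the right tensor factor; one checks that the ranges of the cut position $b$ match term by term. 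The only subtlety is the Koszul sign $(-1)^{(n-1)(|v_1| + \cdots + |v_i|)}$, which is forced by commuting the degree-$(2-n)$ map $f$ past the preceding entries after the usual suspension bookkeeping. I would organise this computation by first checking it with $f$ concentrated in a single tensor degree and reading off that the coderivation identity holds on each graded piece.

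For part~(2), given a coderivation $\sigma$, set $f_i := \pi \circ \sigma|_{V^{\otimes i}}$ where $\pi : T^c(V) \to V$ is the projection onto the cogenerators; on each $V^{\otimes k}$ only $f_0, \ldots, f_k$ contribute to $\sum_i \widetilde{f_i}$, so the sum is well-defined. Then $\sigma = \sum_i \widetilde{f_i}$: both are coderivations (the right-hand side by part~(1) applied termwise) with the same corestriction $\pi$ to $V$, since $\pi \circ \widetilde{f_i} = f_i$ directly from the formula, and a coderivation into the cofree coalgebra $T^c(V)$ is uniquely determined by its corestriction to $V$ — alternatively one proves $\sigma = \sum_i \widetilde{f_i}$ by downward induction on tensor length using the coderivation identity. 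This gives the bijection, with inverse $\{f_i\} \mapsto \sum_i \widetilde{f_i}$.

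The main obstacle is purely bookkeeping: getting the Koszul signs in part~(1) to come out exactly as displayed, and being consistent about whether one works with $V$ or its suspension $sV$ and with the degree $2-n$ versus degree $n-1$ conventions for the operators involved. Once the sign conventions are pinned down, both parts are routine, and I would isolate the sign check into a short computation so that the structural argument for the bijection in part~(2) remains transparent.
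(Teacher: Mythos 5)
Your argument is correct, but note that the paper does not prove this lemma at all: it simply cites Lemma~2.3 of Tradler's paper, and your proposal essentially reconstructs that standard proof. The structure is the right one: the tensor coalgebra is cofree among conilpotent coalgebras, so a coderivation is uniquely determined by its corestriction $\pi\circ\sigma$ to the cogenerators, which gives part~(2) once part~(1) supplies the lift $\widetilde{f_i}$ with $\pi\circ\widetilde{f_i}=f_i$; your observation that only $f_0,\dots,f_k$ act on $V^{\otimes k}$ settles the convergence of $\sum_i\widetilde{f_i}$, and the downward-induction alternative is also fine. The one place where your sketch should be made precise is the sign in part~(1): with the stated convention ($f$ of degree $2-n$, degrees $|v_j|$ taken in $V$), the naive Koszul rule for the identity $\Delta\widetilde{f}=(\widetilde{f}\otimes\ide+\ide\otimes\widetilde{f})\Delta$ would produce $(-1)^{(2-n)(|v_1|+\dots+|v_i|)}$, which differs from the displayed $(-1)^{(n-1)(|v_1|+\dots+|v_i|)}$ by $(-1)^{|v_1|+\dots+|v_i|}$. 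The displayed sign is the one obtained by working on the suspension, i.e.\ viewing $f$ as (the desuspension of) a degree-$1$ map $(sV)^{\otimes n}\to sV$ and applying the Koszul rule to the shifted degrees $|v_j|-1$; this is exactly the ``suspension bookkeeping'' you allude to, and your verification must be carried out in that picture (or, equivalently, with the coderivation identity interpreted on $T^c(sV)$) for the stated formula to come out. With that convention fixed, your proof goes through and agrees with the argument in the cited reference.
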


For a proof see Lemma 2.3 in \cite{Tradler2008}. From this it now follows that

\begin{prop}\label{prop:barAinfty}
  The following are equivalent
  \begin{enumerate}
  \item \((A,m_n)\) is an \(A_{\infty}\)-algebra;
  \item \(T^c(sA)\) is a coalgebra together with a differential of degree \(1\). 
  \end{enumerate}
\end{prop}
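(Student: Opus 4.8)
The plan is to deduce Proposition~\ref{prop:barAinfty} from Lemma~\ref{lemma:lifting} by unwinding what a degree~\(1\) coderivation differential on \(T^c(sA)\) amounts to. First I would invoke part (2) of the lemma: a coderivation \(\sigma : T^c(sA) \to T^c(sA)\) is the same thing as a family of maps \(\{b_i : (sA)^{\otimes i} \to sA\}_{i \geq 0}\). Imposing that \(\sigma\) has degree \(1\) forces each \(b_i\) to have degree \(1\) on \(sA\); under the suspension isomorphism \(s : A \to sA\) this corresponds to a map \(m_i : A^{\otimes i} \to A\) of degree \(2 - i\), via \(b_i = s \circ m_i \circ (s^{-1})^{\otimes i}\) up to the standard Koszul sign. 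I would note (or assume for simplicity, matching the rest of the paper) that \(b_0\) is taken to be zero so that \(T^c(sA)\) genuinely has a counit-compatible differential, i.e. we are in the non-curved case.

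Next I would impose that \(\sigma\) is a differential, \(\sigma^2 = 0\). Since the composite of a coderivation with itself is again a coderivation (degree \(2\)), and since by Lemma~\ref{lemma:lifting}(2) a coderivation is determined by its corestriction to \(sA\), the equation \(\sigma^2 = 0\) is equivalent to \(\mathrm{pr}_{sA} \circ \sigma^2 = 0\), i.e. to the vanishing of one family of maps \((sA)^{\otimes n} \to sA\) indexed by \(n \geq 1\). I would then expand \(\mathrm{pr}_{sA} \circ \sigma^2\) using the explicit coderivation formula from Lemma~\ref{lemma:lifting}(1): the only terms of \(\widetilde{b_s}\) that can feed into \(b_{n-s+1}\) and land back in \(sA\) are those where \(b_s\) is applied to a consecutive block of \(s\) inputs and the remaining \(n - s\) inputs are left untouched. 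Collecting these gives exactly
\begin{equation*}
  \sum_{s=1}^{n} \sum_{i=0}^{n-s} (-1)^{i}\, b_{n-s+1}\big(\ide^{\otimes i} \otimes b_s \otimes \ide^{\otimes (n-s-i)}\big) = 0
\end{equation*}
on \((sA)^{\otimes n}\), where the sign \((-1)^i\) comes from the \((n-1)\equiv 1 \pmod 2\) degree bookkeeping of Lemma~\ref{lemma:lifting}(1) together with the fact that each \(b_s\) has odd degree.

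The remaining step is purely bookkeeping: transport this identity through the suspension to recover the stated \(A_{\infty}\) relations for the \(m_n\). Replacing each \(b_s\) by \(s \circ m_s \circ (s^{-1})^{\otimes s}\) and commuting the suspension maps \(s^{\pm 1}\) past the identity tensor factors and past \(m_s\) produces Koszul signs; after reindexing (setting \(j = i\) and matching \(s\) with the inner multiplication arity) one checks that the total sign becomes \((-1)^{j + s(n-j-s)}\), which is precisely the sign in the definition of an \(A_{\infty}\)-algebra. Conversely, reading the computation backwards shows that the \(A_{\infty}\) relations for \((A, m_n)\) imply \(\mathrm{pr}_{sA}\circ \sigma^2 = 0\), hence \(\sigma^2 = 0\). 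The main obstacle, and the only place requiring care, is the sign reconciliation: one must fix a consistent convention for the suspension isomorphism and the Koszul sign rule and then verify that the signs produced by Lemma~\ref{lemma:lifting}(1) match \((-1)^{j+s(n-j-s)}\) term by term. Everything else is formal, and since this is a known result I would cite the relevant references (e.g. \cite{Tradler2008}) for the detailed sign verification rather than reproduce it in full.
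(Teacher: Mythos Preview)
Your proposal is correct and follows exactly the approach the paper indicates: the paper does not give a proof at all, merely prefacing the proposition with ``From this it now follows that'' (referring to Lemma~\ref{lemma:lifting}) and relying on the reference \cite{Tradler2008}. Your write-up is therefore a faithful, more detailed expansion of precisely what the paper intends.
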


\begin{mydef}
  Let \((A,m_n)\) and \((A',m_n')\) be \(A_{\infty}\)-algebras. An \(A_{\infty}\)\textit{-morphism} \(f = (f_n)_{n \geq 1} : A \rightarrow A'\) is a collection of graded maps \(f_n : A^{\otimes n} \rightarrow A'\) of degree \(1-n\) such that
  \begin{equation*}
    \sum_{\substack{a + s + t = n \\ a,t \geq 0, s \geq 1}} (-1)^{a+st}f_{a+1+t}(\ide^{\otimes a} \otimes m_s \otimes \ide^{\otimes t}) = \sum_{\substack{r \geq 1 \\ i_1 + ... + i_r = n}} (-1)^{\varepsilon}m_r'(f_{i_1} \otimes ... \otimes f_{i_r})
  \end{equation*}
  for all \(n \geq 1\), where \(\varepsilon = (r-1)(i_1-1) + ... + 1\cdot(i_{r-1}-1)\). Note that \(f_1\) is especially a chain map by the above identities. An \(A_{\infty}\)-morphism is \textit{strict} if \(f_i = 0\) for \(i \neq 1\). We say that an \(A_{\infty}\)-morphism is a \textit{quasi-isomorphism} if \(f_1\) is a quasi-isomorphism of chain complexes. 
\end{mydef}

One can show, similar to Proposition~\ref{prop:barAinfty}, that \(A_{\infty}\)-morphisms \(f : A \rightarrow A'\) correspond to dg coalgebras morphisms \(T^c(sf) : T^c(sA) \rightarrow T^c(sA')\) and vice-versa.

\subsection{\(B_{\infty}\)-algebras and morphisms}

The notion of \(B_{\infty}\)-algebras was first defined in \cite{GetzlerJones1994}, abstracting the definition from the structure that Baues, in \cite{Baues1981}, gave on the normalised cochain complex of any simplicial set. 

\begin{mydef}
  Let \(A\) be an \(A_{\infty}\)-algebra and let \(T^c(sA)\) be the corresponding dg coalgebra. We say that \(A\) is a \(B_{\infty}\)\textit{-algebra} if \(T^c(sA)\) has the structure of a dg bialgebra. An \(A_{\infty}\)-morphism \(f : A \rightarrow A'\) of \(B_{\infty}\)-algebras is a \(B_{\infty}\)\textit{-morphism} if the associated morphism \(T^c(sf) : T^c(sA) \rightarrow T^c(sA')\) is a map of dg bialgebras. We say that a \(B_{\infty}\)-morphism is a \textit{quasi-isomorphism} if the underlying \(A_{\infty}\)-morphism is a quasi-isomorphism. We say that an \(B_{\infty}\)-morphism is \textit{strict} if the underlying \(A_{\infty}\)-morphism is strict. 
\end{mydef}

By results similar to Lemma~\ref{lemma:lifting}, this is equivalent to that there are homogeneous maps
\begin{equation*}
  \mu_{p,q} : A^{\otimes p} \otimes A^{\otimes q} \rightarrow A \quad p,q \geq 0
\end{equation*}
of degree \(1-p-q\) fulfilling certain unitality and associativity conditions, as well as fulfilling certain higher Leibniz rules together with the \(\{m_n\}_{n \geq 1}\). For a detailed description of these conditions see Section 5.2 in \cite{ChenLiWang2021}.

\begin{mydef}
  We say that a \(B_{\infty}\)-algebra \((B,m_1,m_2,..,\mu_{p,q})\) is a \textit{brace} \(B_{\infty}\)\textit{-algebra} if \(m_n = 0\) for \(n \geq 3\) and \(\mu_{p,q} = 0\) for \(p \geq 2\). 
\end{mydef}

For a brace \(B_{\infty}\)-algebra, we define the \textit{brace operations} to be
\begin{equation*}
  a\{b_1,...,b_p\} := (-1)^{p|a|+(p-1)|b_1|+(p-2)|b_2|+...+|b_p|}\mu_{1,p}(a \otimes b_1 \otimes ... \otimes b_p)
\end{equation*}
for any \(a,b_1,...,b_p \in A\). We will shorten \(a\{b_1,...,b_p\} = a \{b_{1,p}\}\).

\hfill

In the case of a brace \(B_{\infty}\)-algebra the conditions on the maps \(\mu_{p,q}\) become the following
\begin{enumerate}
\item Higher pre-Jacobi identity:
  \begin{align*}
    (a \{b_{1,p}\}) \{c_{1,q}\} = \sum (-1)^{\epsilon} &a \{c_{1,i_1},b_1 \{c_{i_1+1,i_1+l_1}\},c_{i_1+l+1,i_2},b_2 \{c_{i_2+1,i_2+l_2}\},...,c_{i_p},\\
                                &b_p \{c_{i_p+1,i_p+l_p}\}, c_{i_p+l_p+1,q}\} 
  \end{align*}
  where the sum is taken over all sequences of nonnegative integers \((i_1,...,i_p;l_1,...,l_p)\) such that
  \begin{equation*}
    0 \leq i_i \leq i_1+l_1 \leq i_2 \leq i_2 + l_2 \leq i_3 \leq ... \leq i_p + l_p \leq q
  \end{equation*}
  and
  \begin{equation*}
    \epsilon = \sum_{l=1}^p\left((|b_l|-1)\sum_{j=1}^{i_l}(|c_j|-1)\right).
  \end{equation*}
\item Distributivity:
  \begin{equation*}
    m_2(a_1 \otimes a_2)\{b_{1,q}\} = \sum_{j=0}^q(-1)^{|a_2|\sum_{i=1}^j(|b_i|-1)}m_2((a_1 \{b_{1,j}\}) \otimes (a_2 \{b_{j+1,q}\})).
  \end{equation*}
\item Higher homotopy:
  \begin{align*}
    &m_1(a \{b_{1,p}\}) - (-1)^{|a|(|b_1|-1)}m_2(b_1 \otimes (a \{b_{2,p}\})) + (-1)^{\epsilon_{p-1}}m_2((a \{b_{1,.p-1}\}) \otimes b_p) \\
    = &m_1(a) \{b_{1,p}\} - \sum_{i=0}^{p-1}(-1)^{\epsilon_i}a \{b_{1,i},m_1(b_{i+1}),b_{i+2,p}\} + \sum_{i=0}^{p-2}(-1)^{\epsilon_{i+1}}a \{b_{1,i},m_2(b_{i+1,i+2}),b_{i+3,p}\}
  \end{align*}
  where \(\epsilon_0 = |a|\) and \(\epsilon_i = |a| + \sum_{j=1}^{i}(|b_j|-1)\) for \(i \geq 1\). 
\end{enumerate}

All the \(B_{\infty}\)-algebras in this article will be brace \(B_{\infty}\)-algebras. Given a dg algebra map \(f : A \rightarrow A'\) between brace \(B_{\infty}\)-algebra the following characterisation of strict \(B_{\infty}\)-morphisms was given, see Lemma 5.15 in \cite{ChenLiWang2021}. 

\begin{lemma}\label{lemma:strictMorphism}
  Let \(f : A \rightarrow A'\) be a dg algebra map between two brace \(B_{\infty}\)-algebras. Then \(f\) is a strict \(B_{\infty}\)-morphism if and only if
  \begin{equation*}
    f(a \{b_1,...,b_p\}) = f(a) \{f(b_1),...,f(b_p)\}
  \end{equation*}
  for all \(p \geq 1\) and all \(a, b_1,...,b_p \in A\). 
\end{lemma}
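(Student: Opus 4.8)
The plan is to unpack the definition of a strict \(B_{\infty}\)-morphism through the bar construction, reduce everything to the structure maps \(\mu_{p,q}\), and then use the hypothesis \(\mu_{p,q} = 0\) for \(p \geq 2\) to isolate the brace operations. First I would note that a dg algebra map \(f \colon A \to A'\) between brace \(B_{\infty}\)-algebras is in particular a strict \(A_{\infty}\)-morphism: the \(A_{\infty}\)-morphism relations in degrees \(n = 1, 2\) reduce precisely to \(f m_1 = m_1' f\) and \(f m_2 = m_2'(f \otimes f)\), and for \(n \geq 3\) they are vacuous because \(f\) has no higher components and \(m_n = m_n' = 0\). Hence \(f\) induces the strict morphism of dg coalgebras \(F := T^c(sf) \colon T^c(sA) \to T^c(sA')\) that applies \(sf\) term by term (using the correspondence between \(A_{\infty}\)-morphisms and dg coalgebra morphisms recalled above). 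So \(f\) is a strict \(B_{\infty}\)-morphism if and only if \(F\) is in addition a morphism of algebras for the \(B_{\infty}\)-multiplications \(M\) on \(T^c(sA)\) and \(M'\) on \(T^c(sA')\) — compatibility with the units \(1 \in k\) being automatic, since \(F\) fixes the \(k\)-summand.

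Next I would invoke cofreeness of the tensor coalgebra, the two-variable analogue of Lemma~\ref{lemma:lifting}(2): since \(T^c(sA) \otimes T^c(sA)\) is conilpotent and \(T^c(sA')\) is the cofree conilpotent coalgebra on \(sA'\), a morphism of coalgebras from the former to the latter is determined by its corestriction to \(sA'\), and \(M'\) is by definition the unique such morphism whose corestriction is \(\sum_{p,q \geq 0} \mu_{p,q}'\). Both \(F \circ M\) and \(M' \circ (F \otimes F)\) are morphisms of coalgebras, so \(F \circ M = M' \circ (F \otimes F)\) holds if and only if it holds after corestriction; computing the two corestrictions — \(\pi_1' \circ F = sf \circ \pi_1\) on one side, the corestriction of \(M'\) on the other — this becomes the family of identities \(sf \circ \mu_{p,q} = \mu_{p,q}' \circ (sf)^{\otimes(p+q)}\), one for each \((p,q) \neq (0,0)\).

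The brace hypothesis then collapses this family. For \(p \geq 2\) both sides vanish; the pairs \((1,0)\) and \((0,1)\) give \(sf = sf\) because \(\mu_{1,0} = \mu_{0,1} = \ide\); and \(\mu_{0,q} = 0\) for \(q \geq 2\) handles the rest. So the only surviving conditions are \(sf \circ \mu_{1,q} = \mu_{1,q}' \circ (sf)^{\otimes(q+1)}\) for \(q \geq 1\). Translating \(\mu_{1,q}\) into the brace operation via \(a\{b_1,\dots,b_q\} = (-1)^{\,q|a| + (q-1)|b_1| + \cdots + |b_q|}\,\mu_{1,q}(a \otimes b_1 \otimes \cdots \otimes b_q)\), and using that \(f\) has degree \(0\) so that \(|f(x)| = |x|\), the sign attached to \(a\{b_1,\dots,b_q\}\) and the sign attached to \(f(a)\{f(b_1),\dots,f(b_q)\}\) coincide (as do the suspension signs relating the \(sA\)- and \(A\)-level maps on the two sides). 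Hence these conditions are equivalent to \(f(a\{b_1,\dots,b_p\}) = f(a)\{f(b_1),\dots,f(b_p)\}\) for all \(p \geq 1\), which yields both implications simultaneously.

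The step I expect to require the most care is the corestriction argument in the middle: one must state precisely that \(M'\) is the unique coalgebra morphism \(T^c(sA') \otimes T^c(sA') \to T^c(sA')\) with the prescribed corestriction, that any coalgebra morphism out of \(T^c(sA) \otimes T^c(sA)\) is pinned down by its corestriction on the coaugmentation coideal \(\bigoplus_{(p,q) \neq (0,0)} (sA)^{\otimes p} \otimes (sA)^{\otimes q}\), and that the relevant coalgebra structure on \(T^c(sA) \otimes T^c(sA)\) is the tensor-product one for which \(M\) is a coalgebra morphism. Beyond that, the only work is bookkeeping of Koszul and suspension signs, which is harmless precisely because \(f\) is degree zero and therefore preserves every degree on which those signs depend.
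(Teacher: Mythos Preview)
The paper does not supply its own proof of this lemma; it merely cites Lemma~5.15 in \cite{ChenLiWang2021}. Your argument is correct and is the expected one: strictness makes \(T^c(sf)\) a dg coalgebra map, the bialgebra-morphism condition then reduces by cofreeness of \(T^c\) to the family \(sf \circ \mu_{p,q} = \mu'_{p,q} \circ (sf)^{\otimes(p+q)}\), and the brace hypothesis together with unitality (\(\mu_{0,q} = 0\) for \(q \geq 2\), \(\mu_{1,0} = \mu_{0,1} = \ide\)) leaves exactly the \((1,q)\)-conditions, which are the brace compatibilities up to signs that cancel because \(f\) has degree zero.
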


\begin{mydef} 
  A \textit{Gerstenhaber algebra} \((H, \smile, [-,-])\) is a \(\Z\)-graded \(k\)-module \(H\) for which \((H,\smile)\) is a graded commutative associative algebra, \((sH,[-,-])\) is a graded Lie algebra with bracket \([-,-]\) and
  \begin{equation*}
    [\alpha \smile \beta, \gamma] = [\alpha, \gamma] \smile \beta + (-1)^{|\alpha|(|\gamma|-1)} \alpha \smile [\beta,\gamma]
  \end{equation*}
  for all homogeneous elements \(\alpha,\beta,\gamma \in H\). A morphism of Gerstenhaber algebras is a morphism of \(\Z\)-graded \(k\)-modules which is both a graded algebra morphism and graded Lie algebra morphism of the shifted algebra. We denote the category of Gerstenhaber algebras over \(k\) by \(\mathbf{Ger}_k\).
\end{mydef}

The following is shown in Section 5.2 in \cite{GetzlerJones1994}. 

\begin{prop}\label{prop:BgivesG}
  Let \((A,m_n,\mu_{pq})\) be a \(B_{\infty}\)-algebra. Then there is a natural Gerstenhaber structure on \((H^*(A,m_1), \smile, [-,-])\) on the cohomology, given by
  \begin{align*}
    f \smile g &= m_2(f,g). \\
    [f,g] &= (-1)^{|f|}\mu_{1,1}(f,g) - (-1)^{(|f|-1)(|g|-1)+|g|}\mu_{1,1}(g,f).
  \end{align*}
  Moreover, a \(B_{\infty}\)-quasi-isomorphism induces an isomorphism of Gerstenhaber algebras on the level of cohomology. 
\end{prop}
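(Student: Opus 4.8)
The plan is to read the Gerstenhaber axioms off the low-arity instances of the three defining relations of a brace \(B_{\infty}\)-algebra — the higher pre-Jacobi identity, the distributivity relation, and the higher homotopy relation — in the spirit of Section~5.2 of \cite{GetzlerJones1994} and, in operadic language, of \cite{GerstenhaberVoronov1995}. Throughout write \(a\{b\}=(-1)^{|a|}\mu_{1,1}(a\otimes b)\), so that \(a\smile b=m_{2}(a\otimes b)\) and \([a,b]=a\{b\}-(-1)^{(|a|-1)(|b|-1)}b\{a\}\); note that \(\smile\) is neither associative nor commutative on \(A\) itself, so one must pass to \(H^{*}(A,m_{1})\) for the commutative associative part, while the bracket will already be a dg Lie bracket on \(sA\).

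First I would check that both operations descend to \(H^{*}(A,m_{1})\). The \(n=2\) instance of the \(A_{\infty}\)-relations reads \(m_{1}m_{2}=\pm\,m_{2}(m_{1}\otimes\ide)\pm m_{2}(\ide\otimes m_{1})\) (Koszul signs absorbed in the action on tensors), so \(m_{2}\) is a chain map and \(\smile\) is defined on cohomology. For the bracket, the \(p=1\) case of the higher homotopy relation, evaluated on two elements, has the shape \(m_{1}(a\{b\})-(-1)^{|a|(|b|-1)}m_{2}(b\otimes a)+(-1)^{|a|}m_{2}(a\otimes b)=m_{1}(a)\{b\}-(-1)^{|a|}a\{m_{1}(b)\}\); antisymmetrising this in \(a\) and \(b\) with the signs of the bracket formula, the two \(m_{2}\)-terms assemble into a graded-symmetric expression and cancel, leaving \(m_{1}([a,b])=[m_{1}a,b]-(-1)^{|a|}[a,m_{1}b]\). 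Hence \([-,-]\) is a chain map, independent of the chosen cocycle representatives on cohomology.

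Next I would establish the algebraic axioms on \(H^{*}\). Associativity of \(\smile\) comes from the \(n=3\) \(A_{\infty}\)-relation, with \(m_{3}\) as the associating homotopy, and graded commutativity from the \(p=1\) higher homotopy relation applied to cocycles, which gives \(a\smile b-(-1)^{|a||b|}b\smile a=\pm\,m_{1}(a\{b\})\). For graded antisymmetry and the graded Jacobi identity of \([-,-]\), the crucial observation is the \(p=1\) case of the higher pre-Jacobi identity, which reads \((a\{b\})\{c\}-a\{b\{c\}\}=a\{b,c\}+(-1)^{(|b|-1)(|c|-1)}a\{c,b\}\): the right-hand side is manifestly graded-symmetric in \(b\) and \(c\), so \((a,b)\mapsto a\{b\}\) is a graded right pre-Lie product on \(sA\), and the commutator of such a product satisfies graded Jacobi — so Jacobi and antisymmetry hold already at the chain level, and together with the previous paragraph \((sA,[-,-])\) is a dg Lie algebra. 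Finally, for the graded Leibniz (Poisson) rule I would combine the \(q=1\) distributivity relation \((a_{1}\smile a_{2})\{b\}=(a_{1}\{b\})\smile a_{2}\pm a_{1}\smile(a_{2}\{b\})\) with the \(p=2\) higher homotopy relation, which for cocycles \(a,b_{1},b_{2}\) identifies \(a\{b_{1}\smile b_{2}\}\) with \((a\{b_{1}\})\smile b_{2}\pm b_{1}\smile(a\{b_{2}\})\) up to an \(m_{1}\)-exact term; substituting both expansions into \([\alpha\smile\beta,\gamma]=(\alpha\smile\beta)\{\gamma\}-(-1)^{(|\alpha|+|\beta|-1)(|\gamma|-1)}\gamma\{\alpha\smile\beta\}\) and discarding the coboundaries gives \([\alpha\smile\beta,\gamma]=[\alpha,\gamma]\smile\beta+(-1)^{|\alpha|(|\gamma|-1)}\alpha\smile[\beta,\gamma]\) on \(H^{*}\).

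For the last statement, a \(B_{\infty}\)-morphism \(f\) yields the chain map \(f_{1}\) and hence \(H^{*}(f_{1})\); since \(T^{c}(sf)\) is a map of dg bialgebras, \(f_{1}\) commutes with \(m_{2}\) and with \(\mu_{1,1}\) up to homotopies built from the higher components of \(f\), so \(H^{*}(f_{1})\) intertwines \(\smile\) and \([-,-]\) and is a morphism of Gerstenhaber algebras — when \(f\) is strict this is immediate from Lemma~\ref{lemma:strictMorphism} — and if moreover \(f\) is a quasi-isomorphism then \(H^{*}(f_{1})\) is bijective, hence an isomorphism of Gerstenhaber algebras. I expect the main obstacle to be the sign bookkeeping in the Poisson identity, since it is the one axiom that genuinely requires splicing together two different structure relations and then discarding \(m_{1}\)-exact terms; the cancellation of the \(m_{2}\)-terms in the descent of \([-,-]\) and the precise control of commutativity also need care, whereas associativity and the pre-Lie/Jacobi part are essentially formal.
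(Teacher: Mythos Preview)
The paper does not prove this proposition at all: it simply prefaces the statement with ``The following is shown in Section 5.2 in \cite{GetzlerJones1994}'' and moves on. So there is nothing to compare against; your sketch supplies what the paper leaves to the literature, and the route you take---reading off the Gerstenhaber axioms from the low-arity brace relations (higher pre-Jacobi for the pre-Lie/Jacobi part, distributivity together with the \(p=2\) higher homotopy for the Poisson identity, and the \(p=1\) higher homotopy both for descent of the bracket and for graded commutativity of \(\smile\))---is precisely the standard one in \cite{GerstenhaberVoronov1995} and \cite{GetzlerJones1994}. Your sign-chasing for the cancellation of the \(m_{2}\)-terms when showing \([-,-]\) is a chain map is correct, and the pre-Lie argument for Jacobi is the right one.

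One caveat worth making explicit: your argument is written for \emph{brace} \(B_{\infty}\)-algebras (you invoke exactly the three relations that define that case), whereas the proposition as stated is for arbitrary \((A,m_{n},\mu_{pq})\). In the general case the relations you use acquire extra terms involving \(m_{n}\) for \(n\geq 3\) and \(\mu_{p,q}\) for \(p\geq 2\); these contribute only further homotopies and vanish on cohomology, so the conclusion is unchanged, but strictly speaking you have proved the brace case. Since the paper declares just above that all \(B_{\infty}\)-algebras it considers are brace, this restriction is harmless for its purposes. For the functoriality clause, your argument is fine; you might note that in the brace case the compatibility of \(f_{1}\) with \(\mu_{1,1}\) up to homotopy follows from projecting the bialgebra-morphism condition for \(T^{c}(sf)\) onto the \((1,1)\)-component, where the correction term is governed by \(f_{2}\).
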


We recall two different natural operations on \(B_{\infty}\)-algebras given in Section 5.3 in \cite{ChenLiWang2021}. 

\begin{mydef}\label{def:opBinfinity}
  Let \((A,m_n,\mu_{pq})\) be a \(B_{\infty}\)-algebra. The \textit{opposite} \(B_{\infty}\)\textit{-algebra} \(A^{\opp}\) is the \(B_{\infty}\)-algebra \((A,m_n,\mu_{pq}^{\opp})\), where
  \begin{equation*}
    \mu_{pq}^{\opp}(a_1 \otimes ... \otimes a_p \otimes b_1 \otimes ... \otimes b_q) = (-1)^{pq+\varepsilon}\mu_{qp}(b_1 \otimes ... \otimes b_q \otimes a_1 \otimes ... \otimes a_p)
  \end{equation*}
  where \(\varepsilon := (|b_1| + ... + |b_q|)(|a_1| + ... + |a_p|)\). 
\end{mydef}

On the level of homology, this induces the following operation. 

\begin{mydef}
  Let \((H,\smile,[-,-])\) be a Gerstenhaber algebra. The \textit{opposite Gerstenhaber algebra} \(H^{\op}\) is the Gerstenhaber algebra \((H,\smile, [-,-]^{\op})\), where we define \([f,g]^{\op} = -[g,f]\).  
\end{mydef}

\begin{mydef}\label{def:trBinfinity}
  Let \((A,m_n,\mu_{pq})\) be a \(B_{\infty}\)-algebra. The \textit{transpose} \(B_{\infty}\)\textit{-algebra} \(A^{\tr}\) is the \(B_{\infty}\)-algebra \((A,m_n^{\tr},\mu_{pq}^{\tr})\), where
  \begin{align*}
    m_n^{\tr}(a_1 \otimes ... \otimes a_n) &:= (-1)^{\varepsilon_n} m_n(a_n \otimes ... \otimes a_1) \\
    \mu_{pq}^{\tr}(a_1 \otimes ... \otimes a_p \otimes b_1 \otimes ... \otimes b_q) &:= (-1)^{\varepsilon}\mu_{pq}(a_p \otimes ... \otimes a_1 \otimes b_q \otimes ... \otimes b_1)
  \end{align*}
  where
  \begin{align*}
    \varepsilon_n &:= \frac{(n+1)(n-2)}{2} + \sum_{j=1}^{n-1} |a_j|(|a_{j+1}| + ... + |a_n|) \\
    \varepsilon &:= 1 + \frac{p(p+1)}{2} + \frac{q(q+1)}{2} + \sum_{j=1}^{p-1}|a_j|(|a_{j+1}| + ... + |a_p|) + \sum_{j=1}^{q-1}|b_j|(|b_{j+1}| + ... + |b_q|).
  \end{align*}
\end{mydef}

The two notions turn out to be isomorphic as \(B_{\infty}\)-algebras.

\begin{thm}[Theorem 5.10 in \cite{ChenLiWang2021}]\label{thm:optr}
  Let \((A,m_n,\mu_{pq})\) be a \(B_{\infty}\)-algebra. Then there is a natural \(B_{\infty}\)-isomorphism between \(A^{\opp}\) and \(A^{\tr}\). 
\end{thm}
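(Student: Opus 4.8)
The plan is to exhibit an explicit $B_{\infty}$-isomorphism $\Phi \colon A^{\opp} \to A^{\tr}$ whose underlying map of graded vector spaces is, up to signs, the identity on each tensor power — more precisely, I would look for a strict morphism, or at worst an $A_\infty$-isomorphism with $\Phi_1 = \pm\,\ide$ and higher components determined by a sign twist. Since both $A^{\opp}$ and $A^{\tr}$ have the same underlying complex $(A,m_1)$ when $m_n = 0$ for $n \geq 3$ (and in general $m_n^{\tr}$ only reverses arguments and inserts a sign), the natural guess is that $\Phi_1$ is given by scaling the degree-$n$ part of $sA^{\otimes n}$ by a fixed sign $(-1)^{c_n}$ depending only on $n$, together with the reversal permutation $a_1 \otimes \cdots \otimes a_n \mapsto a_n \otimes \cdots \otimes a_1$. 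The first thing I would do is pin down $c_n$ and the reversal Koszul sign so that $\Phi_1$ intertwines $m_n$ on $A^{\opp}$ (which is just $m_n$) with $m_n^{\tr}$ on $A^{\tr}$; this is exactly the standard fact that reversal is an $A_\infty$-automorphism of any $A_\infty$-algebra after a sign twist, and the exponent $\varepsilon_n = \frac{(n+1)(n-2)}{2} + \sum_j |a_j|(|a_{j+1}|+\cdots+|a_n|)$ in Definition~\ref{def:trBinfinity} should match the Koszul sign of the reversal permutation on suspended generators plus a degree-only correction.

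Next I would promote this to a $B_\infty$-statement by checking that $T^c(s\Phi)$ is a map of dg bialgebras. The dg coalgebra part is automatic since $\Phi$ is an $A_\infty$-morphism. For the bialgebra (multiplication) part, I would use the characterization via the $\mu_{pq}$: the multiplication on $T^c(sA)$ for $A^{\opp}$ is built from $\mu_{pq}^{\opp}$, which by Definition~\ref{def:opBinfinity} is $\mu_{qp}$ with the two blocks of inputs swapped and a sign $(-1)^{pq+\varepsilon}$; the multiplication for $A^{\tr}$ is built from $\mu_{pq}^{\tr}$, which reverses \emph{within} each of the two blocks separately. The key computation is to verify that conjugating $\mu_{qp}^{\text{swap}}$ by the reversal map $\Phi_1$ (applied to the concatenated word of length $p+q$, which reverses the whole word and hence swaps the two blocks \emph{and} reverses each) produces exactly $\mu_{pq}^{\tr}$, and that all the accumulated Koszul signs collapse to the prescribed $\varepsilon = 1 + \frac{p(p+1)}{2} + \frac{q(q+1)}{2} + \cdots$. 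Concretely: reversing a length-$(p+q)$ word sends the block structure $(a_{1,p} \mid b_{1,q})$ to $(b_{q},\ldots,b_1 \mid a_p,\ldots,a_1)$, i.e.\ it simultaneously performs the ``swap'' of $\opp$ and the ``within-block reversal'' of $\tr$; so the two operations genuinely differ by conjugation by reversal, and the content of the theorem is the sign bookkeeping.

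The main obstacle, as usual for these results, is precisely the sign bookkeeping: I would need to carefully track the suspension degree shifts (elements of $sA$ carry degree $|a|-1$), the Koszul signs from the reversal permutation both on inputs and on the composite maps, and the extra scalar sign $(-1)^{c_n}$ built into $\Phi_1$, and confirm they assemble into the exponents $\varepsilon_n$ and $\varepsilon$ written in the definitions. A clean way to organize this is to pass entirely to the coalgebra/cofree picture à la Proposition~\ref{prop:barAinfty} and Lemma~\ref{lemma:lifting}: show that the single map ``reverse every tensor word in $T^c(sA)$, with the Koszul sign'' is an isomorphism of dg bialgebras from $(T^c(sA), \text{structure of } A^{\opp})$ to $(T^c(sA), \text{structure of } A^{\tr})$, so that exactly one sign computation — the Koszul sign of full reversal — does all the work, and the block-splitting identities for $\opp$ versus $\tr$ become the observation that reversal of a concatenation equals the concatenation of the reversals in the opposite order. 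Finally I would note naturality: the construction uses nothing about $A$ beyond its $B_\infty$-structure and commutes with strict $B_\infty$-morphisms, giving the ``natural'' in the statement. I would cite the proof of Theorem 5.10 in \cite{ChenLiWang2021} for the detailed verification rather than reproduce it.
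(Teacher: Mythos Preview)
The paper does not give its own proof of this theorem: it is stated as Theorem~5.10 of \cite{ChenLiWang2021} and simply cited, with no argument supplied. So there is no ``paper's proof'' to compare your proposal against. Your sketch is a reasonable outline of the standard argument (reversal on $T^c(sA)$ with Koszul signs intertwines the $\opp$ and $\tr$ structures), and you yourself defer the detailed sign verification to \cite{ChenLiWang2021}, which is exactly what the paper does.
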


\subsection{Operads and \(B_{\infty}\)-algebras}

We recall the definition of (non-symmetric) operads via partial composition and their connection to \(B_{\infty}\)-algebras. We mainly follow the exposition in \cite{GerstenhaberVoronov1995}. Let \((\mathbf{C}, \otimes, \mathbbm{1})\) be a symmetric monoidal category.

\begin{mydef}
  An \textit{(non-symmetric) operad} over \(\mathbf{C}\) is a triple \((\mathcal{O}, \circ, \eta)\), where \(\{\mathcal{O}(n)\}_{n \geq 1}\) is a sequence of objects in \(\mathbf{C}\), where \(\eta : \mathbbm{1} \rightarrow \mathcal{O}(1)\) is a map in \(\mathbf{C}\) and morphisms 
  \begin{equation*}
    \circ_{i}^{m,n} : \mathcal{O}(m) \otimes \mathcal{O}(n) \rightarrow \mathcal{O}(m+n-1) \quad \text{ for } i \in \Z,
  \end{equation*}
  such that the following conditions hold for any \(f \in \mathcal{O}(m)\), \(g \in \mathcal{O}(n)\), \(h \in \mathcal{O}(l)\) (suppressing the superscripts \(m, n\) and \(l\)),
  \begin{enumerate}
  \item \(f \circ_{i} g = 0\) if \(i > m-1\) or \(i < 0\); 
  \item \((f \circ_{i} g) \circ_j h = f \circ_i (g \circ_{j-i} h)\) if \(i \leq j < n+i\); 
  \item \((f \circ_i g) \circ_j h = (f \circ_j h) \circ_{i+l-1} g\) if \(j < i\);
  \item The composition
    \begin{equation*}
      \begin{tikzcd}
        \mathbbm{1} \otimes \mathcal{O}(m) \ar{r}{\eta \otimes \ide} & \mathcal{O}(1) \otimes \mathcal{O}(m) \ar{r}{\circ_{0}} & \mathcal{O}(m) 
      \end{tikzcd}
    \end{equation*}
    is the left unit morphism and the composition
    \begin{equation*}
      \begin{tikzcd}
        \mathcal{O}(m) \otimes \mathbbm{1} \ar{r}{\ide \otimes \eta} & \mathcal{O}(m) \otimes \mathcal{O}(1) \ar{r}{\circ_i} & \mathcal{O}(m)
      \end{tikzcd}
    \end{equation*}
    is the right unit morphism for all \(i < m\).  
  \end{enumerate}
  We call the morphisms \(\circ_i\) \textit{partial compositions}. 
\end{mydef}

If we visualise each element in \(\mathcal{O}(n)\) as a tree
\begin{center}
  \begin{tikzpicture}
    \node at (0,1.75) {\(\dots\)} ;
    
    \draw (0,1) node {\(f\)} circle (0.25);
    
    \draw (0,0) -- (0,0.75);
    \draw (-0.1,1.25) -- (-0.5,2);
    \draw (0.1,1.25) -- (0.5,2);
  \end{tikzpicture}
\end{center}
then the partial composition can be visualised as
\begin{center}
  \begin{tikzpicture}
    \node at (0,1.75) {\(\dots\)};
    
    \draw (0,1) node {\(f\)} circle (0.25);
    
    \draw (0,0) -- (0,0.75);
    \draw (-0.1,1.25) -- (-0.5,2);
    \draw (0.1,1.25) -- (0.5,2);

    \node at (2,1.75) {\(\dots\)} ;
    
    \draw (2,1) node {\(g\)} circle (0.25);
    
    \draw (2,0) -- (2,0.75);
    \draw (-0.1+2,1.25) -- (-0.5+2,2);
    \draw (0.1+2,1.25) -- (0.5+2,2);

    \node at (1,1) {\(\circ_i\)};
    \node at (3,1) {\(=\)};

    \node at (3.45,1.9) {\(\dots\)};
    \node at (4.55, 1.9) {\(\dots\)};
    \node at (4,2.75) {\(\dots\)};
    \node at (4.75,2.2) {\(i+1\)};
    
    \draw (4,1) node {\(f\)} circle (0.25);
    \draw (4,2) node {\(g\)} circle (0.25);
    
    \draw (4,0) -- (4,0.75);
    \draw (-0.1+4,1.25) -- (-1+4,2);
    \draw (0.1+4,1.25) -- (1+4,2);
    \draw (4,1.25) -- (4,1.75);

    \draw (-0.1+4,1.25+1) -- (-0.5+4,2+1);
    \draw (0.1+4,1.25+1) -- (0.5+4,2+1);
  \end{tikzpicture}
\end{center}
We define the total composition
\begin{equation*}
  \gamma(i_1,...,i_n) : \mathcal{O}(n) \otimes \mathcal{O}(i_1) \otimes ... \otimes \mathcal{O}(i_n) \rightarrow \mathcal{O}(i_1+...+i_n)
\end{equation*}
by \(f \otimes g_{i_1} \otimes ... \otimes g_{i_n} \mapsto (((f \circ_0 g_{i_1}) \circ_{1} ...) \circ_{n-1} g_{n})\). Visually, we graft \(g_{i_1}\) onto the first input of \(f\) and \(g_{i_2}\) onto the second and so on. We denote the image of \(f \otimes g_{i_1} \otimes ... \otimes g_{i_n}\) under \(\gamma\) by \(\gamma(f;g_{i_1},...,g_{i_n})\). A \textit{morphism} \(\varphi : \mathcal{O} \rightarrow \mathcal{O}'\) of operads is a collection of maps \(\{\varphi(n) : \mathcal{O}(n) \rightarrow \mathcal{O}'(n)\}_{n \geq 0}\) in \(\mathbf{C}\) such that they commute with the partial compositions and the unit morphisms. 

\begin{remark}
  Given an operad \(\{\mathcal{O}(n)\}_{n \geq 0}\) over \(\mathbf{Vec}_k\) and an element \(g \in \mathcal{O}(n)\), we let \(|g| = n-1\) denote the degree of \(g\) in the operadic desuspension of \(\mathcal{O}\). We use the same notation for non-negatively graded vector spaces, i.e. if \(f \in V_n\), then \(|f| = n-1\). 
\end{remark}

\begin{ex}
  Let \(\mathbf{C} = \mathbf{Vec}_k\). The \textit{associative operad} \(\mathcal{As}\) is the operad given by \(\mathcal{As}(n) = k\) for all \(n \geq 1\), with partial composition maps given by the canonical isomorphism \(k \otimes_k k \rightarrow k\). 
\end{ex}

\begin{mydef}
  A \textit{brace algebra} \((V,-\{-,...,-\})\) is a non-negatively graded vector space \(V = \oplus_{n \geq 0} V_n\) together with homogeneous maps
  \begin{equation*}
    - \{-,...,-\} : V^{\otimes (n+1)} \rightarrow V
  \end{equation*}
  of degree \(-n\) fulfilling the following equation
  \begin{align*}
    x \{x_1,...,x_n\}\{y_1,...,y_n\} = \sum_{0 \leq i_1 \leq j_1 \leq i_2 \leq ... \leq i_m \leq j_m \leq n}(-1)^{\epsilon}x \{&y_1,...,y_{i_1}, x \{y_{i_1+1},...,y_{j_1}\},y_{j_1+1},..., \\
                                                                                                    &y_{i_m},x_m \{y_{i_m+1},...,y_{j_m}\},y_{j_m+1},...,y_n\}
  \end{align*}
  where \(\epsilon = \sum_{p=1}^{m}|x_p|\sum_{q=1}^{i_p}|y_q|\). We define \(x\{\}\) to be \(x\). Note that, by the equation above,
  \begin{equation*}
    [x,y] = x\{y\} - (-1)^{|x||y|}y\{x\}
  \end{equation*}
  defines a graded Lie algebra structure on \(sV\).
\end{mydef}

Given an operad \(\{\mathcal{O}(n)\}_{n \geq 0}\) over \(\mathbf{Vec}_k\), we define a brace algebra structure on \(\mathcal{O} = \oplus_{n \geq 0} \mathcal{O}(n)\) by
\begin{equation*}
  f \{g_1,...,g_n\} = \sum (-1)^\epsilon \gamma(f;\ide,...,\ide,g_{1},\ide,...,\ide,g_n,\ide,...,\ide)
\end{equation*}
where the sum runs over all possible ways of grafting \(g_1,...,g_n\) onto \(f\) while keeping the order and \(\epsilon = \sum_{p=1}^{n}|g_p|i_p\), with \(i_p\) being the total number of inputs to the left of \(g_p\). The sign comes from the Koszul sign rule applied to expressions of the form \(f \{g_1,...,g_m\}(v_1,...,v_k) = \sum \gamma(f;v_1,...,v_p,g_1(v_{p+1},...,v_l),...,v_k)\).

\begin{ex}
  Example of the signs for the brace operation.
  \begin{center}
    \begin{tikzpicture}
      \draw (0,1) node {\(f\)} circle (0.25);
      \draw (1,1) node {\(g_1\)} circle (0.25);
      \draw (2,1) node {\(g_2\)} circle (0.25);

      \draw ($(-90:0.25)+(0,1)$) -- ($(0:0)+(0,0)$);
      \draw ($(-90:0.25)+(1,1)$) -- ($(0:0)+(1,0)$);
      \draw ($(-90:0.25)+(2,1)$) -- ($(0:0)+(2,0)$);

      \draw ($(110:0.25)+(0,1)$) -- ($(-130:0.25)+(0,2.25)$);
      \draw ($(70:0.25)+(0,1)$) -- ($(-50:0.25)+(0,2.25)$);

      \draw ($(110:0.25)+(1,1)$) -- ($(-130:0.25)+(1,2.25)$);
      \draw ($(70:0.25)+(1,1)$) -- ($(-50:0.25)+(1,2.25)$);

      \draw ($(110:0.25)+(2,1)$) -- ($(-140:0.25)+(2,2.25)$);
      \draw ($(70:0.25)+(2,1)$) -- ($(-40:0.25)+(2,2.25)$);
      \draw ($(90:0.25)+(2,1)$) -- ($(-90:0.25)+(2,2.35)$);
      
      \draw[decoration={brace,raise=5pt},decorate,thick]
      (0.75,0) -- (0.75,2.25);
      \node at (1.5,0) {\(,\)};
      \draw[decoration={brace, mirror, raise=5pt},decorate,thick]
      (2.25,0) -- (2.25,2.25);

      \node at (3,1) {\(=\)};
      \node at (4.25,1) {\((-1)^{0 \cdot 1 + 2 \cdot 2}\)};

      \draw (5.75,1) node {\(f\)} circle (0.25);
      \draw (5,1.75) node {\(g_1\)} circle (0.25);
      \draw (5.75,1.75) node {\(g_2\)} circle (0.25);

      \draw ($(145:0.25)+(5.75,1)$) to[out=150, in=-50, looseness=1] ($(-70:0.25)+(5,1.75)$);
      \draw ($(90:0.25)+(5.75,1)$) -- ($(5.75,1.75)+(-90:0.25)$);
      \draw ($(35:0.25)+(5.75,1)$) to[out=20, in=-90, looseness=1] ($(0:0)+(6.5,2.25)$);
      \draw ($(5.75,0)+(0:0)$) -- ($(-90:0.25)+(5.75,1)$);

      \draw ($(110:0.25)+(5.75,1.75)$) -- ($(5.55,2.25)$);
      \draw ($(70:0.25)+(5.75,1.75)$) -- ($(5.95,2.25)$);
      \draw ($(90:0.25)+(5.75,1.75)$) -- ($(5.75,2.25)$);
      \draw ($(110:0.25)+(5,1.75)$) -- ($(4.8,2.25)$);
      \draw ($(70:0.25)+(5,1.75)$) -- ($(5.2,2.25)$);

      \node at (6.75,1) {\(+\)};
      \node at (8,1) {\((-1)^{0 \cdot 1 + 3 \cdot 2}\)};

      \draw (9.75,1) node {\(f\)} circle (0.25);
      \draw (9,1.75) node {\(g_1\)} circle (0.25);
      \draw (10.5,1.75) node {\(g_2\)} circle (0.25);

      \draw ($(110:0.25)+(9.75+0.75,1.75)$) -- ($(9.55+0.75,2.25)$);
      \draw ($(70:0.25)+(9.75+0.75,1.75)$) -- ($(9.95+0.75,2.25)$);
      \draw ($(90:0.25)+(9.75+0.75,1.75)$) -- ($(9.75+0.75,2.25)$);
      \draw ($(110:0.25)+(9,1.75)$) -- ($(8.8,2.25)$);
      \draw ($(70:0.25)+(9,1.75)$) -- ($(9.2,2.25)$);
      \draw ($(90:0.25)+(9.75,1)$) -- (9.75,2.25);
      \draw ($(-90:0.25)+(9.75,1)$) -- (9.75,0);

      \draw ($(145:0.25)+(9.75,1)$) to[out=150, in=-50, looseness=1] ($(-70:0.25)+(9,1.75)$);
      \draw ($(35:0.25)+(9.75,1)$) to[out=20, in=-130, looseness=1] ($(-110:0.25)+(10.5,1.75)$);

      \node at (11-0.25-7.7,1-2.5) {\(+\)};
      \node at (12-7.75,1-2.5) {\((-1)^{1 \cdot 1 + 3 \cdot 2}\)};

      \draw (13.5-7.75,1-2.5) node {\(f\)} circle (0.25);
      \draw (13.5-7.75,1.75-2.5) node {\(g_1\)} circle (0.25);
      \draw (14.25-7.75,1.75-2.5) node {\(g_2\)} circle (0.25);

      \draw ($(110:0.25)+(13.75+0.75-0.25,1.75)-(7.75,2.5)$) -- ($(13.55+0.75-0.25,2.25)-(7.75,2.5)$);
      \draw ($(70:0.25)+(13.75+0.75-0.25,1.75)-(7.75,2.5)$) -- ($(13.95+0.75-0.25,2.25)-(7.75,2.5)$);
      \draw ($(90:0.25)+(13.75+0.75-0.25,1.75)-(7.75,2.5)$) -- ($(13.75+0.75-0.25,2.25)-(7.75,2.5)$);
      \draw ($(110:0.25)+(13.75-0.25,1.75)-(7.75,2.5)$) -- ($(12.8-0.25+0.75,2.25)-(7.75,2.5)$);
      \draw ($(70:0.25)+(13.75-0.25,1.75)-(7.75,2.5)$) -- ($(13.2-0.25+0.75,2.25)-(7.75,2.5)$);
      \draw ($(-90:0.25)+(13.75-0.25,1)-(7.75,2.5)$) -- ($(13.75-0.25,0)-(7.75,2.5)$);
      \draw ($(90:0.25)+(13.75-0.25,1)-(7.75,2.5)$) -- ($(-90:0.25)+(13.5,1.75)-(7.75,2.5)$);

      \draw ($(35:0.25)+(13.5,1)-(7.75,2.5)$) to[out=20, in=-130, looseness=1] ($(-110:0.25)+(14.25,1.75)-(7.75,2.5)$);
      \draw ($(145:0.25)+(13.5,1)-(7.75,2.5)$) to[out=150, in=-90, looseness=1] ($(12.75,2.25)-(7.75,2.5)$);
    \end{tikzpicture}
  \end{center}
\end{ex}

\begin{prop}
  For any operad \(\{\mathcal{O}(n)\}_{n \geq 0}\) over \(\mathbf{Vec}_k\), the structure described above defines a brace algebra structure on \(\mathcal{O} = \oplus_{n \geq 0}\mathcal{O}(n)\).
\end{prop}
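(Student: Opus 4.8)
The plan is to derive the brace algebra axioms directly from the operad axioms by a term-by-term comparison of signed sums of total compositions. The easy points come first: $x\{\} = x$ holds by definition, and for homogeneity, grafting $g_p \in \mathcal{O}(a_p)$ together with $k-n$ copies of $\ide \in \mathcal{O}(1)$ onto $x \in \mathcal{O}(k)$ produces an element of $\mathcal{O}\bigl(k - n + \sum_p a_p\bigr)$, so each map $-\{-,\dots,-\}$ is homogeneous of the claimed degree with respect to the grading $\mathcal{O} = \bigoplus_n \mathcal{O}(n)$. The substance is the brace identity relating $\bigl(x\{x_1,\dots,x_m\}\bigr)\{y_1,\dots,y_n\}$ to the nested right-hand side, and I would prove it by expanding both sides completely and matching terms.

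For the left-hand side, $x\{x_1,\dots,x_m\}$ is a signed sum over order-preserving graftings of $x_1,\dots,x_m$ onto the leaves of $x$ (the unused leaves receiving $\ide$), and then $-\{y_1,\dots,y_n\}$ grafts $y_1,\dots,y_n$, in order, onto the leaves of the resulting tree. The key observation is that a leaf of that tree is either a leaf of $x$ carrying an $\ide$ or a leaf of some $x_p$, and that the leaves coming from a fixed $x_p$ form a consecutive block in left-to-right order; hence the $y$'s landing on $x_p$ are a consecutive block $y_{i_p+1},\dots,y_{j_p}$, and these blocks interleave with the loose $y$'s landing directly on $x$ exactly as prescribed by the index set $0 \le i_1 \le j_1 \le \dots \le j_m \le n$. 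On the right-hand side, $x_p\{y_{i_p+1},\dots,y_{j_p}\}$ performs precisely the grafting of those $y$'s onto $x_p$, and the outer brace grafts the resulting trees together with the loose $y$'s onto $x$. Thus both sides are indexed by the same data, and the two total compositions attached to a given index agree as elements of $\mathcal{O}$: interchanging ``graft $x_p$ onto a leaf of $x$, then graft a $y$ onto a leaf of $x_p$'' with ``graft the $y$ onto $x_p$ first, then graft the result onto $x$'' is exactly the nested case of associativity axiom (2), the commutation of graftings at disjoint leaves of $x$ is axiom (3), and the $\ide$ slots are absorbed by the unit axiom (4). The degenerate cases $j_p = i_p$ are handled by the convention $x_p\{\} = x_p$.

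The remaining point, which I expect to be the only laborious one, is matching the signs. I would handle this using the description given above of the signs as Koszul signs coming from the evaluated formula $f\{g_1,\dots,g_r\}(v_1,\dots,v_k) = \sum \gamma(f; v_1,\dots,g_1(v_{\bullet}),\dots,v_k)$: regarding the brace operations as genuine homogeneous maps on $\bigoplus_n \mathcal{O}(n)$, I would evaluate both sides of the identity on a test tuple, expand, and observe that term by term the underlying tree — hence the final left-to-right placement of all the symbols $x_p$, $y_q$ and $v_i$ — is the same, so the accumulated Koszul signs coincide. Reading off the difference between ``graft the $x_p$'s into $x$ first'' and ``graft the $y$'s into the $x_p$'s first'' then produces exactly the extra sign $(-1)^\epsilon$ with $\epsilon = \sum_p |x_p| \sum_{q \le i_p} |y_q|$ appearing in the identity, the remaining signs being those already built into the brace operations on the two sides. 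Pinning down the ordering conventions when several $x_p$'s or $y_q$'s attach near the same leaf, and verifying that the Koszul signs match on the nose, is the one genuinely technical step; everything else is formal bookkeeping on trees.
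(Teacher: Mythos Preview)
Your proposal is correct and is the standard direct verification of the brace identity from the operad axioms; the bijection between terms via consecutive blocks of $y$'s landing on each $x_p$, the use of the associativity axioms (2) and (3) together with the unit axiom (4) to identify the underlying trees, and the Koszul-sign bookkeeping are all on target. The paper itself, however, gives no argument at all: it simply cites Proposition~1 in \cite{GerstenhaberVoronov1995}, so what you have written is essentially a sketch of the proof one finds in that reference rather than an alternative route.
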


See Proposition 1 in \cite{GerstenhaberVoronov1995}.

\begin{mydef}
  An \textit{operad with multiplication} is an operad \(\mathcal{O}\) over \(\mathbf{Vec}_k\), together with an element \(m \in \mathcal{O}(2)\) such that \(m\{m\} = 0\). A morphism of operads with multiplication \(\varphi : (\mathcal{O},m) \rightarrow (\mathcal{O}',m')\) is morphism of operads such that the following condition holds \(\varphi(2)(m) = m'\). 
\end{mydef}

\begin{remark}
  An operad with multiplication is equivalent to an operad \(\mathcal{O}\) together with the data of a map \(\mathcal{As} \rightarrow \mathcal{O}\) from the associative operad. 
\end{remark}

\begin{ex}
  Given an object \(V\) in a \(k\)-linear monoidal category \((\mathbf{D}, \otimes, \mathbbm{1})\) there are two operads associated to \(V\):
\begin{enumerate}
\item The \textit{endomorphism operad} \(\EndOp(V)\) over \(\mathbf{Vec}_k\), where
  \begin{equation*}
    \EndOp(V)(n) = \mathbf{D}(V^{\otimes n}, V)
  \end{equation*}
  and the partial compositions are given by
  \begin{equation*}
    f \circ_i g = f \circ (\ide^{\otimes i} \otimes g \otimes \ide^{\otimes m-i-1}). 
  \end{equation*}
\item The \textit{coendomorphism operad} \(\CoEndOp(V)\) over \(\mathbf{Vec}_k\), where
  \begin{equation*}
    \CoEndOp(V) = \mathbf{D}(V,V^{\otimes n})
  \end{equation*}
  and the partial compositions are given by
  \begin{equation*}
    f \circ_i g  = (\ide^{\otimes m-i-1} \otimes g \otimes \ide^{\otimes i}) \circ f.
  \end{equation*}
\end{enumerate}
Both of these are operads over \((\mathbf{Vec}_k,\otimes_k)\). If \((V,\mu,\eta)\) is a monoid in \(\mathbf{D}\), then \(\EndOp(V)\) will be an operad with multiplication \(\mu\), whilst if \((V,\Delta,\varepsilon)\) is a comonoid then \(\CoEndOp(V)\) will be an operad with multiplication given by the comultiplication \(\Delta\). Note that \((\mathbf{D}, \otimes, \mathbbm{1})\) does \textbf{not} need to be symmetric, as the (co)endomorphism operads will be operads in \(\mathbf{Vec}_k\) because of the enrichment. 
\end{ex}

Given an operad \(\mathcal{O}\) with multiplication \(\mu\) over \(\mathbf{Vec}_k\), we can define a differential and product on \(\mathcal{O} = \bigoplus_{n \geq 0} \mathcal{O}(n)\) by \(d = [\mu,-]\) and \(f \smile g = (-1)^{}\).
 
\begin{thm}\label{thm:OperadB}
  Let \(\mathcal{O}\) be an operad with multiplication over \(k\). Then \(\bigoplus_{n \geq 0} \mathcal{O}(n)\) has the structure of brace \(B_{\infty}\)-algebra. 
\end{thm}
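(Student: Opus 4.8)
The plan is to construct the $B_\infty$-structure on $\mathcal{O} = \bigoplus_{n\geq 0}\mathcal{O}(n)$ by first isolating the pieces we already have and then checking they satisfy the brace-$B_\infty$ axioms. We already know from the preceding proposition that the brace operations $f\{g_1,\dots,g_n\}$, built from the total composition $\gamma$ with identities inserted, make $\mathcal{O}$ into a brace algebra; this immediately supplies the operations $\mu_{1,q}$ (equivalently all $\mu_{p,q}$ with $p\geq 2$ set to zero). The multiplication $\mu\in\mathcal{O}(2)$ with $\mu\{\mu\}=0$ supplies $m_2(f,g) := (-1)^{|f|}\mu\{f,g\}$ (the cup product, the half-written formula $f\smile g$ in the text) and $m_1 := [\mu,-] = \mu\{-\} - (-1)^{|{-}|}(-)\{\mu\}$, up to the sign conventions of the paper, with all higher $m_n=0$ for $n\geq 3$. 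So the entire content is: verify that this package $(\mathcal{O}, m_1, m_2, \mu_{1,q})$ satisfies the three displayed conditions for a brace $B_\infty$-algebra — the higher pre-Jacobi identity, distributivity, and higher homotopy.

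First I would dispatch the higher pre-Jacobi identity: this is precisely the brace-algebra relation already established in the proposition defining brace algebras on operads, after matching the indexing conventions (the operadic brace relation with its sign $\epsilon = \sum_p |x_p|\sum_q|y_q|$ is literally the $B_\infty$ higher pre-Jacobi identity restricted to the $\mu_{1,q}$ operations), so this step costs only a careful sign bookkeeping, not real work. Next, distributivity: since $m_2(a_1,a_2) = \pm\,\mu\{a_1,a_2\}$ and $\mu$ itself lives in $\mathcal{O}(2)$, the identity $m_2(a_1\otimes a_2)\{b_{1,q}\}= \sum_j \pm\, m_2((a_1\{b_{1,j}\})\otimes(a_2\{b_{j+1,q}\}))$ follows by applying the brace relation to $(\mu\{a_1,a_2\})\{b_1,\dots,b_q\}$: because $\mu$ has exactly two inputs, the only terms in the expansion are those distributing the $b_i$'s among the two slots occupied by $a_1$ and $a_2$ (plus terms where a $b_i$ is grafted directly onto $\mu$, which one checks vanish or reassemble correctly using $|\mu|=1$ and the structure of $\gamma$). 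Then higher homotopy: expand $m_1(a\{b_{1,p}\})$ using $m_1 = [\mu,-]$ and apply the brace relation to $\mu\{a\{b_{1,p}\}\}$ and $(a\{b_{1,p}\})\{\mu\}$; regrouping via the pre-Jacobi identity produces exactly the right-hand side terms $m_1(a)\{b_{1,p}\}$, $a\{\dots,m_1(b_{i+1}),\dots\}$, and $a\{\dots,m_2(b_{i+1,i+2}),\dots\}$, together with the two $m_2$ boundary terms on the left, once one uses $\mu\{\mu\}=0$ to kill the leftover. This last verification is the one that requires genuine care.

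The main obstacle I anticipate is precisely the sign reconciliation in the higher-homotopy identity: the operadic signs come from the Koszul rule applied to $\gamma(f;\dots)$ with the degree convention $|g|=n-1$ for $g\in\mathcal{O}(n)$, whereas the $B_\infty$ conditions as stated carry the shifted-degree signs $\epsilon_i = |a| + \sum_{j\leq i}(|b_j|-1)$. Bridging these requires being scrupulous about the operadic (de)suspension; in practice one either fixes the suspension isomorphism once and transports all operations through it, or one verifies the identities directly and absorbs the discrepancy into the definition of $m_1, m_2$. I would handle this by citing the operadic-to-$B_\infty$ passage already present in \cite{GerstenhaberVoronov1995} and \cite{GetzlerJones1994} for the structure maps and conventions, reducing the proof to the observation that $\EndOp$ and $\CoEndOp$ with multiplication are the motivating examples and the general statement is formally identical; the detailed sign check, being routine but tedious, can be relegated to those references or to a remark. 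Finally, strict unitality of the $B_\infty$-structure (needed if one wants $m_2$ to have a unit) follows from the unit $\eta:\mathbbm{1}\to\mathcal{O}(1)$ and the operad unit axioms, which guarantee $m_2(\eta,a)=a=m_2(a,\eta)$ up to sign.
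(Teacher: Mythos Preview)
Your sketch is correct in outline and in fact goes well beyond what the paper does: the paper's entire proof of this theorem is the single sentence ``For a proof, see Theorem 3 in \cite{GerstenhaberVoronov1995}.'' So there is nothing to compare at the level of argument; both you and the paper ultimately defer to the same reference, but you have unpacked what is inside it while the paper simply points to it.

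One small sharpening of your distributivity step: when you expand $(\mu\{a_1,a_2\})\{b_{1,q}\}$ via the brace relation, the ``extra'' terms where some $b_i$ lands directly on $\mu$ (outside the slots occupied by $a_1,a_2$) are not merely something one ``checks vanish or reassemble''---they vanish outright because $\mu$ has exactly two inputs, so $\mu\{z_1,\dots,z_k\}=0$ whenever $k>2$, and the brace relation forces at least the two arguments $a_1\{\dots\}$ and $a_2\{\dots\}$ to appear. This makes the distributivity check cleaner than you suggest. Otherwise your plan is sound, and your caution about the sign reconciliation in the higher-homotopy identity is well placed; that is indeed where the work in \cite{GerstenhaberVoronov1995} lies.
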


For a proof, see Theorem 3 in \cite{GerstenhaberVoronov1995}. Let \(\mathbf{mOp}_{k}\) denote the category of operads with multiplication and let \(\mathbf{B}_{\infty, \text{strict}}\) denote the category of \(B_{\infty}\)-algebras with strict morphism. It follows from Lemma~\ref{lemma:strictMorphism} and Theorem~\ref{thm:OperadB}, that we have a functor \(\mathbf{mOp}_k \rightarrow \mathbf{B}_{\infty,\text{strict}}\). 

\subsection{Relative Hochschild cohomology}
\label{sec:relat-hochsch-cohom}

We recall the definition of relative Hoch\-schild cohomology as first defined by Hochschild in 1956 in \cite{Hochschild1956}. Let \(k\) be a field, let \(A\) be a \(k\)-algebra and let \(B \subseteq A\) be a \(k\)-subalgebra. We say that a complex \((X_{\sbu}, d)\) of \(A\)-modules is \((A|B)\)-\textit{relative exact} if it is split exact as a complex of \(B\)-modules. Similarly, a \textit{relative split epimorphism} is an epimorphism \(f : M \rightarrow N\) of \(A\)-modules that splits over \(B\).

\begin{mydef}
  An \(A\)-module \(P\) is \textit{relative projective} if for every relative split epimorphism \(g : Y \rightarrow Z\) and every \(A\)-module map \(\varphi : P \rightarrow Z\) there exists an \(A\)-module map \(\psi : P \rightarrow Y\) such that \(\varphi = g \circ \psi\). 
\end{mydef}

\begin{lemma}
  Let \(P\) be a relative projective \(A\)-module. Then \(P\) is summand of an \(A\)-module \(A \otimes_B N\), where \(N\) is a \(B\)-module.
\end{lemma}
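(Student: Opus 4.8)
The statement to prove is that a relative projective $A$-module $P$ is a direct summand of some $A\otimes_B N$ with $N$ a $B$-module. The natural choice is to take $N = P$ regarded as a $B$-module by restriction along $B\subseteq A$, and to exhibit $P$ as a summand of $A\otimes_B P$. So the first step is to write down the two candidate maps. I would take the multiplication map $\mu\colon A\otimes_B P\to P$, $a\otimes p\mapsto ap$, which is visibly a morphism of $A$-modules and is surjective (since $1\otimes p\mapsto p$). Conversely, the $B$-linear map $s\colon P\to A\otimes_B P$, $p\mapsto 1\otimes p$, is a $B$-module section of $\mu$, because $\mu(1\otimes p) = p$. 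Hence the surjection $\mu\colon A\otimes_B P\to P$ is a relative split epimorphism in the sense defined just above the statement.

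**Applying relative projectivity.** Now I would invoke the defining property of relative projectivity for $P$: given the relative split epimorphism $g=\mu\colon A\otimes_B P\to P$ and the identity morphism $\varphi=\ide_P\colon P\to P$ of $A$-modules, there must exist an $A$-module map $\psi\colon P\to A\otimes_B P$ with $\mu\circ\psi = \ide_P$. This $\psi$ is an $A$-linear section of $\mu$, so $A\otimes_B P \cong \ker\mu \oplus \psi(P)$ as $A$-modules, with $\psi(P)\cong P$. Therefore $P$ is (isomorphic to) a direct summand of $A\otimes_B P$, which has the required form $A\otimes_B N$ with $N = P|_B$. That completes the argument.

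**Where the work actually is.** There is essentially no obstacle here; the only thing to be careful about is the bookkeeping ensuring that $\mu$ is well-defined on the tensor product over $B$ (it is, since $ab\otimes p$ and $a\otimes bp$ both map to $abp$ using associativity and the fact that $B$ acts on $P$ by restriction of the $A$-action) and that $s$ and $\mu$ are morphisms in the correct categories — $s$ only $B$-linear, $\mu$ genuinely $A$-linear. One might also note, for completeness, that $A\otimes_B N$ is itself always relative projective for any $B$-module $N$ (the section being $n\mapsto 1\otimes n$), so the lemma together with this observation characterizes relative projectives as exactly the summands of modules induced from $B$; but proving the converse direction is not required by the statement as given. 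The only genuinely load-bearing input is the definition of relative projective together with the identification of $\mu$ as a relative split epimorphism, so I would make that identification explicit and keep the rest brief.
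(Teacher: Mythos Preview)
Your proposal is correct and is exactly the standard argument: exhibit the multiplication map \(A\otimes_B P\to P\) as a relative split epimorphism and then lift \(\ide_P\) through it using relative projectivity. The paper itself does not give a proof but simply cites Lemma~2 of Hochschild's 1956 paper, where precisely this argument appears, so you have reproduced the intended proof.
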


For a proof see Lemma 2 in \cite{Hochschild1956}.

\hfill

A \textit{relative projective resolution} of an \(A\)-module \(M\) is a complex \((P_{n},d)_{n \geq 0}\) such that 
\begin{equation*}
  \begin{tikzcd}
    \dots \ar{r}{} & P_2 \ar{r}{d_2} & P_1 \ar{r}{d_1} & P_0 \ar{r}{\varepsilon} & M \ar{r}{} & 0
  \end{tikzcd}
\end{equation*}
is a relative exact complex and such that \(P_n\) is relative projective for all \(n \geq 0\). Note that every \(A\)-module \(M\) has a relative projective resolution by taking the kernel of the relative split epimorphism \(A \otimes_B M \rightarrow M\). Akin to the absolute case, we define the \textit{relative Ext functor} to be \(\Ext^n_{(A|B)}(M,N) = H^n(\Hom_{A}(P_{\sbu},N))\), where \(P_{\sbu}\) is a relative projective resolution of \(M\). This is independent of the choice of relative projective resolution, see Section 2 in \cite{Hochschild1956}.

\hfill

Let \(M\) be an \(A\)-bimodule. We define the \textit{relative Hochschild cohomology of} \(A\) \textit{with coefficients in} \(M\) to be \(\HH^*(A|B,M) = \Ext^*_{(A^e|B^e)}(A,M)\). If \(M=A\) then we talk simply about the relative Hochschild cohomology of \(A\) and denote it by \(\HH^*(A|B)\). One can compute the relative Hochschild cohomology using the \textit{relative bar resolution} 
\begin{equation*}
  \begin{tikzcd}
    ... \ar{r}{} & A^{\otimes_{B} m} \ar{r}{d} & A^{\otimes_{B} (m-1)} \ar{r} & ... \ar{r}{} & A \otimes_{B} A \ar{r}{\varepsilon} & A \ar{r}{} & 0
  \end{tikzcd}
\end{equation*}
where \(\varepsilon : a \otimes_{B} a' \mapsto aa'\) and the differential \(d : A^{\otimes_{B} (m+1)} \rightarrow A^{\otimes_{B} m}\) is given by
\begin{align*}
  d(a_0 \otimes_{B} ... \otimes_{B} a_{m}) = \sum_{i = 0}^{m-1} (-1)^{i} a_0 \otimes_{B} ... \otimes_{B} a_ia_{i+1} \otimes_{B} ... \otimes_{B} a_m.
\end{align*}
Thus, by applying \(\Hom_{A\text{-}A}(-,M)\) to the relative bar resolution, one gets the complex
\begin{equation*}
  \Hom_{A\text{-}A}(A^{\otimes_B (m+2)}, M) \cong \Hom_{B\text{-}B}(A^{\otimes_B m}, M) 
\end{equation*}
with differential \(d^m : \Hom_{B\text{-}B}(A^{\otimes_B m}, M) \rightarrow \Hom_{B\text{-}B}(A^{\otimes_B (m+1)},M)\) given by
\begin{align*}
  d^m(f)(a_0 \otimes_B ... \otimes_B a_m) &= a_0f(a_1 \otimes_{B} ... \otimes_{B} a_m) \\
                                          &+ \sum_{i=1}^{m-1} (-1)^{i} f(a_0 \otimes_{B} ... \otimes_{B} a_ia_{i+1} \otimes_{B} ... \otimes_{B} a_m) \\
                                          &+ (-1)^{m}f(a_0 \otimes_{B}... \otimes_{B} a_{m-1})a_m
\end{align*}
Let \(C^n(A|B,M)\) denote the relative Hochschild \(n\)-cochains and let \(C^n(A|B)\) denote \(C^n(A|B,A)\). It is noted without proof in \cite{GerstenhaberSchack1986} that relative Hochschild cohomology has the structure of a Gerstenhaber algebra.

\begin{prop}
  The relative Hochschild cochain complex \(C^*(A|B)\) is equal to \(\EndOp(A)\), where \((A,\mu, \iota)\) is seen as a monoid in the category of \(B\)-bimodules. Moreover, it has a multiplication given by \(\mu \in \EndOp(A)(2)\). Therefore, \(C^*(A|B)\) has the structure of a \(B_{\infty}\)-algebra over \(k\) and \(\HH^*(A|B)\) is a Gerstenhaber algebra. 
\end{prop}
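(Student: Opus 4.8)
The plan is to unwind the two claims in the proposition in order: first identify the relative Hochschild cochain complex with the underlying graded vector space of the endomorphism operad $\EndOp(A)$, where $A$ is regarded as a monoid $(A,\mu,\iota)$ in the monoidal category $(B\Bimod, \otimes_B, B)$; second, observe that under this identification the operadic multiplication is exactly $\mu \in \EndOp(A)(2)$ together with the fact that $\mu\{\mu\}=0$ encodes associativity; and finally invoke Theorem~\ref{thm:OperadB} to conclude that $C^*(A|B)$ is a brace $B_\infty$-algebra and Proposition~\ref{prop:BgivesG} to conclude that $\HH^*(A|B)$ is a Gerstenhaber algebra.

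For the first step I would spell out the isomorphism
\begin{equation*}
  \EndOp(A)(n) = \Hom_{B\text{-}B}(A^{\otimes_B n}, A) = C^n(A|B),
\end{equation*}
which is literally the cochain group appearing after applying $\Hom_{A\text{-}A}(-,A)$ to the relative bar resolution and using the adjunction $\Hom_{A\text{-}A}(A^{\otimes_B(n+2)},A)\cong \Hom_{B\text{-}B}(A^{\otimes_B n},A)$ recalled in the excerpt. Here one must be a little careful that $A^{\otimes_B n}$ is the correct object in $B\Bimod$ and that the tensor powers are taken in that monoidal category, so that $\EndOp(A)$ is genuinely an operad in $\mathbf{Vec}_k$ via the enrichment, as the Example on the (co)endomorphism operad allows even for non-symmetric $\mathbf{D}$. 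The monoid structure $(A,\mu,\iota)$ in $B\Bimod$ is exactly the data making $A$ a $k$-algebra containing $B$ as a subalgebra, so this is available.

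Next I would check that the operadic differential $d=[\mu,-]$ and the brace-induced cup product agree with the relative Hochschild differential $d^m$ and the usual cup product of cochains. This is a direct computation: expanding $[\mu,f] = \mu\{f\} - (-1)^{|f|}f\{\mu\}$ using the definition of the brace operations on $\EndOp(A)$ via partial compositions $f\circ_i g = f\circ(\ide^{\otimes i}\otimes g\otimes \ide^{\otimes m-i-1})$ reproduces exactly the three summands of $d^m(f)$ (the outer two terms $a_0 f(\dots)$ and $(-1)^m f(\dots)a_m$ coming from $\mu\{f\}$ at the two extreme positions and the inner alternating sum from $f\{\mu\}$), with signs matching after the operadic desuspension degree convention $|f|=n-1$ is taken into account. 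I would state this as the content of the proof and leave the sign bookkeeping to the reader or to a short displayed computation, since it is the standard Gerstenhaber identification carried over verbatim to the relative, $\otimes_B$ setting.

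The main obstacle, such as it is, is purely one of signs and conventions rather than of mathematical substance: one must confirm that the Koszul signs produced by the brace formalism in Theorem~\ref{thm:OperadB} coincide with the signs in the relative bar differential $d^m$ as written, and that the condition $m\{m\}=0$ is equivalent to associativity of $\mu$ over $B$ (it is, since $m\{m\} = m\circ_0 m - m\circ_1 m$ up to sign, which vanishes iff $\mu(\mu\otimes\ide)=\mu(\ide\otimes\mu)$). Once that is in place, everything else is a citation: Theorem~\ref{thm:OperadB} upgrades $\bigoplus_n \EndOp(A)(n)$ to a brace $B_\infty$-algebra over $k$, hence so is $C^*(A|B)$; and Proposition~\ref{prop:BgivesG} then endows $H^*(C^*(A|B),m_1) = \HH^*(A|B)$ with its Gerstenhaber structure, with cup product $f\smile g = m_2(f,g) = f\cup g$ and bracket the image of $\mu_{1,1}$, which is the relative Gerstenhaber bracket asserted without proof in \cite{GerstenhaberSchack1986}.
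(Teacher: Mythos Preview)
Your proposal is correct and follows exactly the paper's approach: the paper's own proof is the one-line ``This follows from Theorem~\ref{thm:OperadB} and Proposition~\ref{prop:BgivesG},'' and your write-up simply unpacks that citation by making the identification $\EndOp(A)(n)=\Hom_{B\text{-}B}(A^{\otimes_B n},A)=C^n(A|B)$ explicit and checking that $\mu\{\mu\}=0$ is associativity. The extra verification that the operadic differential $[\mu,-]$ matches the relative bar differential is not strictly needed for the statement as phrased (it asserts equality of underlying graded objects and then invokes the cited results), but it does no harm.
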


\begin{proof}
  This follows from Theorem~\ref{thm:OperadB} and Proposition~\ref{prop:BgivesG}.
\end{proof}

\subsection{Tensor product of relative Hochschild cohomology}
We show that relative Hochschild cohomology commutes with tensor product as Gerstenhaber algebras, following the approach in \cite{OcalOkeWitherspoon2022}. Throughout, let \(B \rightarrow A\) and \(D \rightarrow C\) be extensions of \(k\)-algebras.

\hfill

Note that relative Hochschild cohomology can also be described as Ext-functor in the category of \(A\)-bimodules with the exact structure given by all kernel-cokernel pairs which split as \(B\)-bimodules. Moreover, the relative bar resolution is power flat, i.e \(\mathbb{B}(A|B)^{\otimes_A n}\) is relative exact for all \(n \geq 1\). Therefore, we can use the description of the Gerstenhaber bracket given in \cite{VolkovWitherspoon}. 

\begin{lemma}
  Let \(P_{\sbu} \rightarrow X\) be an \((A|B)\)-relative projective resolution of an \(A\)-bimodule \(X\) and let \(Q_{\sbu} \rightarrow Y\) be a \((C|D)\)-relative projective resolution of a \(C\)-bimodule \(Y\). Then \(P_{\sbu} \otimes Q_{\sbu} \rightarrow X \otimes Y\) is an \((A \otimes C|B \otimes D)\)-relative projective resolution of \(X \otimes Y\) as an \(A \otimes C\)-bimodule.  
\end{lemma}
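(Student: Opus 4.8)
The plan is to view $A$- and $C$-bimodules as modules over the respective enveloping algebras and to use the natural algebra isomorphisms $(A \otimes C)^e \cong A^e \otimes C^e$ and $(B \otimes D)^e \cong B^e \otimes D^e$ (permuting the inner tensor factors), under which the $(A \otimes C)^e$-module $X \otimes Y$ becomes the $A^e \otimes C^e$-module carrying the $A^e$-action on $X$ and the $C^e$-action on $Y$, and under which $B^e \otimes D^e \subseteq A^e \otimes C^e$ corresponds to $(B \otimes D)^e \subseteq (A \otimes C)^e$. Here $P_{\sbu} \otimes Q_{\sbu}$ denotes the total complex of the bicomplex, equipped with the induced augmentation $\varepsilon^P \otimes \varepsilon^Q \colon \Tot(P_{\sbu} \otimes Q_{\sbu}) \to X \otimes Y$; since both resolutions are concentrated in non-negative degrees, the degree-$k$ term $\bigoplus_{n + m = k} P_n \otimes Q_m$ is a \emph{finite} direct sum. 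Two things must be checked: that each such term is $(A \otimes C | B \otimes D)$-relative projective, and that the augmentation is split over $(B \otimes D)^e$.

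For the first, recall (from the description of relative projective modules above) that a relative projective $A$-bimodule is a summand of one of the shape $A^e \otimes_{B^e} N$ with $N$ a $B$-bimodule, and likewise for $C$. Since $k$ is a field, there is a natural isomorphism of $(A \otimes C)$-bimodules
\[
  (A^e \otimes_{B^e} N) \otimes (C^e \otimes_{D^e} L) \;\cong\; (A^e \otimes C^e) \otimes_{B^e \otimes D^e} (N \otimes L) \;\cong\; (A \otimes C)^e \otimes_{(B \otimes D)^e} (N \otimes L),
\]
which exhibits the right-hand side as relative projective. Hence $P_n \otimes Q_m$, being a summand of such a module, is relative projective, and so is the finite direct sum $\bigoplus_{n+m=k} P_n \otimes Q_m$.

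For the second, observe that $P_{\sbu} \to X$ being split exact over $B^e$ means precisely that $\varepsilon^P$ is a homotopy equivalence of complexes of $B^e$-modules: a contracting homotopy of the augmented complex provides a $B^e$-linear section $\sigma_{-1} \colon X \to P_0$ with $\varepsilon^P \sigma_{-1} = \ide_X$ together with a $B^e$-linear homotopy $\sigma$ witnessing $\sigma_{-1}\varepsilon^P \sim \ide_{P_{\sbu}}$; similarly for $\varepsilon^Q$, with $\tau_{-1}$ and $\tau$ over $D^e$. Factoring $\varepsilon^P \otimes \varepsilon^Q = (\ide_X \otimes \varepsilon^Q) \circ (\varepsilon^P \otimes \ide_{Q_{\sbu}})$, tensoring the $B^e$-homotopy equivalence $\varepsilon^P$ over $k$ with the complex $Q_{\sbu}$ of $D^e$-modules yields a homotopy equivalence $\varepsilon^P \otimes \ide_{Q_{\sbu}} \colon \Tot(P_{\sbu} \otimes Q_{\sbu}) \to X \otimes Q_{\sbu}$ (with $X$ placed in degree zero) of complexes of $(B \otimes D)^e$-modules, with homotopy inverse $\sigma_{-1} \otimes \ide$ and homotopy built from $\sigma \otimes \ide_{Q_{\sbu}}$ (which is $(B \otimes D)^e$-linear, being $B^e$-linear on the $P$-factor and the identity on the $D^e$-module $Q_{\sbu}$); symmetrically $\ide_X \otimes \varepsilon^Q$ is a $(B \otimes D)^e$-homotopy equivalence. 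The composite $\varepsilon^P \otimes \varepsilon^Q$ is then a homotopy equivalence of complexes of $(B \otimes D)^e$-modules, so its mapping cone — namely, up to a shift, the augmented complex $\Tot(P_{\sbu} \otimes Q_{\sbu}) \to X \otimes Y \to 0$ — is contractible, hence split exact, over $(B \otimes D)^e$. Combined with the first step, this shows $\Tot(P_{\sbu} \otimes Q_{\sbu}) \to X \otimes Y$ is an $(A \otimes C | B \otimes D)$-relative projective resolution.

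The only delicate point is the sign bookkeeping hidden in the assertion that tensoring a homotopy equivalence with an arbitrary complex over $k$ again gives a homotopy equivalence — concretely, verifying that $\sigma \otimes \ide_{Q_{\sbu}}$ is a contracting homotopy for the mapping cone of $\varepsilon^P \otimes \ide_{Q_{\sbu}}$ inside the total complex. This is the standard computation in which the cross-terms cancel by the Koszul sign rule, so I do not expect it to present a genuine obstacle; if preferred, the whole second step can be packaged as the general fact that $(-)\otimes_k(\text{a complex of }D^e\text{-modules})$ and $(\text{a complex of }B^e\text{-modules})\otimes_k(-)$ carry $B^e$- (resp.\ $D^e$-)homotopy equivalences to $(B\otimes D)^e$-homotopy equivalences, applied once to each factor.
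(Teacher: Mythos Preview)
Your proof is correct and follows essentially the same approach as the paper's: both establish relative projectivity of $P_n \otimes Q_m$ via the natural isomorphism of induced bimodules, and both derive the required exactness from the fact that tensoring over $k$ preserves the $B^e$- and $D^e$-contractibility of the augmented complexes. The only cosmetic difference is that the paper invokes K\"unneth separately for exactness and then appeals to additivity of the tensor product for the $B \otimes D$-splitting, whereas you fold both into a single homotopy-equivalence argument---which is in fact cleaner, since split exactness over $(B \otimes D)^e$ already implies exactness and renders the K\"unneth step redundant.
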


\begin{proof}
  As \(P_i\) and \(Q_j\) are relative projective, we can assume without loss of generality that \(P_i = A \otimes_B M \otimes_B A\) and \(Q_j = C \otimes_{D} N \otimes_{D} C\) for some \(B\)-bimodule \(M\) and some \(D\)-bimodule \(N\). Thus, we have that \(P_i \otimes Q_j \cong A \otimes C \otimes_{B \otimes D} M \otimes N \otimes_{B \otimes D} A \otimes C\) and thus \(P_i \otimes Q_j\) is \((A \otimes C|B \otimes D)\)-projective. From Künneth's theorem, we get that \(P_{\sbu} \otimes Q_{\sbu}\) is concentrated in degree \(0\) with homology \(H^0(P_{\sbu} \otimes Q_{\sbu}) \cong H^0(P_{\sbu}) \otimes H^0(Q_{\sbu}) \cong A \otimes C\). Lastly, we note, as the tensor product is an additive functor, that \(P_{\sbu} \otimes Q_{\sbu}\) splits as a complex of \(B \otimes D\)-bimodules.
\end{proof}

To show that we have an isomorphism of Gerstenhaber algebras, we use the description of the Gerstenhaber bracket given in \cite{Volkov2019}. Let \(P_{\sbu} \rightarrow A\) be a relative projective resolution of \(A\) as a bimodule and let \(Q_{\sbu} \rightarrow C\) be a relative projective resolution of \(C\) as a bimodule. Let \(f : P \rightarrow A[-m]\) and \(g : Q \rightarrow C[-n]\) be two cocycles, and let \(\varphi_f : P \rightarrow P[1-m]\) and \(\varphi_g : Q \rightarrow C[1-n]\) be their respective lifts. We have the map
\begin{equation*}
  (f \otimes 1)\Delta_P : P \rightarrow P[-m] 
\end{equation*}
given by \(p \mapsto f(p_{\underline{1}})p_{\underline{2}}\) and the map
\begin{equation*}
  (1 \otimes g)\Delta_Q : Q \rightarrow Q[-n]
\end{equation*}
given by \(q \mapsto (-1)^{|q_{\underline{1}}|\cdot n}q_{\underline{1}}g(q_{\underline{2}})\). Thus, we get the maps
\begin{equation*}
  \varphi_f \otimes (1 \otimes g)\Delta_Q : P \otimes Q \rightarrow (P \otimes Q)[1-m-n] 
\end{equation*}
given by \(p \otimes q \mapsto (-1)^{((|\varphi_f(p)|+1-m)+|q_{\underline{1}}|)(-n)}\varphi_f(q) \otimes q_{\underline{1}}g(q_{\underline{2}})\) and
\begin{equation*}
  (f \otimes 1)\Delta_P \otimes \varphi_g : P \otimes Q \rightarrow (P \otimes Q)[1-m-n]
\end{equation*}
given by \(p \otimes q \mapsto (-1)^{(|f(p_{\underline{1}})p_{\underline{2}}|-m)(-n)}f(p_{\underline{1}})p_{\underline{2}} \otimes \varphi_g(q)\).

\begin{lemma}
  Let \(f\) and \(g\) be two cocycles for \(P_{\sbu}\) and \(Q_{\sbu}\) respectively and let \(\varphi_f\) and \(\varphi_g\) be their respective lifts. Then
  \begin{equation*}
    \varphi_{f \otimes g} = \varphi_f \otimes (1 \otimes g)\Delta_C + (-1)^m(f \otimes 1)\Delta_A \otimes \varphi_g
  \end{equation*}
  is a lift for \(f \otimes g\). 
\end{lemma}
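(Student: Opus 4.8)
The plan is to check the proposed formula directly against the defining property of a lift. Write $\partial$ for the total differential on $P_{\sbu}\otimes Q_{\sbu}$, write $\Delta_P$ and $\Delta_Q$ for the coproducts of the two resolutions, and let $\Delta_{P\otimes Q}$ denote the coproduct induced on $P_{\sbu}\otimes Q_{\sbu}$ — a relative projective resolution of $A\otimes C$ by the lemmas above — obtained from $\Delta_P\otimes\Delta_Q$ by interchanging the two inner factors. Recall (see \cite{Volkov2019}) that a lift of a cocycle $h$ of degree $\ell$ is a map of degree $1-\ell$ whose graded commutator with the differential equals $(h\otimes 1)\Delta-(1\otimes h)\Delta$; thus $\varphi_f$ and $\varphi_g$ satisfy $[\partial_P,\varphi_f]=(f\otimes 1)\Delta_P-(1\otimes f)\Delta_P$ and $[\partial_Q,\varphi_g]=(g\otimes 1)\Delta_Q-(1\otimes g)\Delta_Q$, and the task is to show $[\partial,\varphi_{f\otimes g}]=\bigl((f\otimes g)\otimes 1\bigr)\Delta_{P\otimes Q}-\bigl(1\otimes(f\otimes g)\bigr)\Delta_{P\otimes Q}$, the degree of $\varphi_{f\otimes g}$ being $1-m-n$ by inspection.

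First I would record the Leibniz rule $[\partial,\alpha\otimes\beta]=[\partial_P,\alpha]\otimes\beta+(-1)^{|\alpha|}\alpha\otimes[\partial_Q,\beta]$ for graded maps $\alpha\colon P_{\sbu}\to P_{\sbu}$, $\beta\colon Q_{\sbu}\to Q_{\sbu}$. Applying it to the two summands of $\varphi_{f\otimes g}$, and using that $(1\otimes g)\Delta_Q$ and $(f\otimes 1)\Delta_P$ are chain maps — since $f,g$ are cocycles and $\Delta_P,\Delta_Q$ are chain maps, so that $[\partial_Q,(1\otimes g)\Delta_Q]=0=[\partial_P,(f\otimes 1)\Delta_P]$ — one obtains (the stated factor $(-1)^m$ cancelling the factor $(-1)^{|(f\otimes 1)\Delta_P|}=(-1)^{-m}$ produced by the Leibniz rule)
\begin{equation*}
  [\partial,\varphi_{f\otimes g}]=\bigl((f\otimes 1)\Delta_P-(1\otimes f)\Delta_P\bigr)\otimes(1\otimes g)\Delta_Q+(f\otimes 1)\Delta_P\otimes\bigl((g\otimes 1)\Delta_Q-(1\otimes g)\Delta_Q\bigr).
\end{equation*}
The two copies of $(f\otimes 1)\Delta_P\otimes(1\otimes g)\Delta_Q$ cancel — this cancellation is precisely what pins down the relative sign $(-1)^m$ in the statement — leaving $[\partial,\varphi_{f\otimes g}]=(f\otimes 1)\Delta_P\otimes(g\otimes 1)\Delta_Q-(1\otimes f)\Delta_P\otimes(1\otimes g)\Delta_Q$.

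To finish, one identifies these two terms with the desired right-hand side. Writing $\Delta_P(p)=p_{\underline 1}\otimes p_{\underline 2}$ and $\Delta_Q(q)=q_{\underline 1}\otimes q_{\underline 2}$ and unravelling the definition of $\Delta_{P\otimes Q}$, a short computation gives $(f\otimes 1)\Delta_P\otimes(g\otimes 1)\Delta_Q=\bigl((f\otimes g)\otimes 1\bigr)\Delta_{P\otimes Q}$ and likewise $(1\otimes f)\Delta_P\otimes(1\otimes g)\Delta_Q=\bigl(1\otimes(f\otimes g)\bigr)\Delta_{P\otimes Q}$, which completes the proof. The whole argument is sign bookkeeping, and that is where care is needed: one must track the Koszul signs coming from the degrees of $f,g,\varphi_f,\varphi_g$ and from the symmetry defining $\Delta_{P\otimes Q}$, checking in particular that the Leibniz rule genuinely yields the two cross terms with opposite signs (so that $(-1)^m$ is forced) and that, in the identification $(f\otimes 1)\Delta_P\otimes(g\otimes 1)\Delta_Q=((f\otimes g)\otimes 1)\Delta_{P\otimes Q}$, the sign from the symmetry cancels the one from commuting $g$ past $p_{\underline 2}$.
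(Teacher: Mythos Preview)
Your argument is correct: the Leibniz rule for the graded commutator, the chain-map property of $(f\otimes 1)\Delta_P$ and $(1\otimes g)\Delta_Q$, the cancellation of the cross term (which, as you note, is what fixes the sign $(-1)^m$), and the final identification via $\Delta_{P\otimes Q}$ all go through as written. The only delicate point is the Koszul sign accounting you flag at the end, and you have identified exactly which commutations need to be checked.

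For comparison, the paper does not actually give a proof of this lemma: it simply cites Lemma~3.8 of \cite{OcalOkeWitherspoon2022} for the absolute case and (implicitly) observes that the same computation carries over verbatim to the relative setting. Your proposal is precisely that computation, written out in full, so you have supplied what the paper omits rather than taken a different route.
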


For a proof, see Lemma 3.8 in \cite{OcalOkeWitherspoon2022} for the absolute case. 

\begin{thm}\label{thm:tensorDecomp}
  The isomorphism
  \begin{equation*}
    \begin{tikzcd}
      \HH^*(A | B) \otimes \HH^*(C | D) \ar{r}{} & \HH^*(A \otimes C | B \otimes D)
    \end{tikzcd}
  \end{equation*}
  given by \([f] \otimes [g] \mapsto [f \otimes g]\) is an isomorphism of Gerstenhaber algebras. 
\end{thm}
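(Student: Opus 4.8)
The plan is to verify that the map $[f]\otimes[g]\mapsto[f\otimes g]$ is (i) a well-defined isomorphism of graded vector spaces, (ii) an algebra morphism for the cup product, and (iii) a Lie algebra morphism for the Gerstenhaber bracket on the suspension. For (i), I would take the relative bar resolution $\mathbb{B}(A|B)\to A$ and $\mathbb{B}(C|D)\to C$; by the first lemma above, $\mathbb{B}(A|B)\otimes\mathbb{B}(C|D)\to A\otimes C$ is an $(A\otimes C|B\otimes D)$-relative projective resolution. Applying $\Hom$ and using that everything in sight is finitely generated free over the relevant ground rings degreewise, the Künneth theorem gives the isomorphism on cohomology; naturality and the explicit shuffle/Alexander–Whitney comparison between $\mathbb{B}(A\otimes C|B\otimes D)$ and $\mathbb{B}(A|B)\otimes\mathbb{B}(C|D)$ show the induced map is exactly $[f]\otimes[g]\mapsto[f\otimes g]$.

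For the cup product in (ii), the cup product on $\Ext^*$ is computed via the comultiplication of the resolution (its coalgebra/diagonal structure $\Delta$), and the diagonal on $\mathbb{B}(A|B)\otimes\mathbb{B}(C|D)$ is, up to the Koszul sign, the tensor product of the two diagonals. So $(f\otimes g)\smile(f'\otimes g')$ and $(-1)^{|g||f'|}(f\smile f')\otimes(g\smile g')$ represent the same class, which is the statement that the iso is a graded algebra map; this is the relative analogue of the standard fact and needs only bookkeeping of Koszul signs.

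For the bracket in (iii) I would use the Volkov description of the Gerstenhaber bracket via lifts of cocycles along the resolution, exactly as set up in the paragraph preceding the theorem. Given cocycles $f$ on $P_\bullet$ and $g$ on $Q_\bullet$ with lifts $\varphi_f,\varphi_g$, the preceding lemma states that $\varphi_{f\otimes g}=\varphi_f\otimes(1\otimes g)\Delta_C+(-1)^m(f\otimes1)\Delta_A\otimes\varphi_g$ is a lift for $f\otimes g$. I would plug two such products $f\otimes g$ and $f'\otimes g'$ into Volkov's bracket formula $[\,-,-\,]=$ (composite with $\varphi$) $\pm$ (composite with $\varphi$), expand using the lemma, and observe that the cross terms (those mixing $\varphi_f$ with $\varphi_{g'}$, etc.) telescope or vanish because $f$ is a cocycle on the $A$-factor only and $g'$ on the $C$-factor only — they live on tensor-disjoint resolutions, so their lifts commute up to sign and the mixed contributions cancel in pairs. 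What survives is precisely $[f,f']\otimes(g\smile g') \pm (f\smile f')\otimes[g,g']$, i.e. the Gerstenhaber/Lie compatibility expressing that the tensor product of Gerstenhaber algebras is a Gerstenhaber algebra and the map is a morphism.

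The main obstacle is step (iii): carefully tracking the Koszul signs through Volkov's formula and confirming that the mixed terms cancel. The cup-product compatibility (ii) is comparatively routine once the diagonal on the tensor resolution is identified, and well-definedness (i) is Künneth plus a standard shuffle-map comparison; but the bracket computation requires a genuine sign audit, and the cleanest route is probably to reduce to the absolute statement of \cite{OcalOkeWitherspoon2022} by noting that the relative bar complex is a retract, in the relative exact structure, of the absolute one, so the bracket formulas are formally identical and only the ground ring $k$ is replaced by $B$ (resp.\ $B\otimes D$) where appropriate.
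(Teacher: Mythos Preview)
Your proposal is correct and follows essentially the same route as the paper: the paper sets up exactly the ingredients you use (tensor of relative projective resolutions is a relative projective resolution, power-flatness of the relative bar resolution so that the Volkov--Witherspoon description of the bracket applies, and the lemma giving the explicit lift $\varphi_{f\otimes g}$) and then declares the proof ``more or less identical to the proof in the absolute case, see Theorem~3.9 in \cite{OcalOkeWitherspoon2022}.'' Your steps (i)--(iii) are precisely that absolute argument transcribed to the relative setting, so your sketch is exactly what the deferred proof amounts to.
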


The proof is more or less identical to the proof in the absolute case, see Theorem 3.9 in \cite{OcalOkeWitherspoon2022}. 

\subsection{Corings and bicomodules}
\label{sec:corings}

Let \(B\) be a ring. We denote morphisms of left, respectively right, modules by \(\Hom_{B\text{-}}(-,-)\) and \(\Hom_{\text{-}B}(-,-)\) respectively.

\begin{mydef}
  Let \(P\) be a left \(B\)-module. A \textit{left finite dual basis} for \(P\) is a set \(\{f_i,a_i\}_{i=1,...,n}\), where \(f_i \in \Hom_{B\text{-}}(P,B)\) and \(a_i \in P\), such that \(x = \sum_{i=1}^n f_i(x)a_i\) for all \(x \in P\). To reduce clutter, we will skip the summation sign and write \(x = f_i(x)a_i\). We define a finite dual basis of right modules and bimodules similarly. 
\end{mydef}

\begin{lemma}
  Let \(P\) be a left \(B\)-module. Then \(P\) is finitely generated projective if and only if there exists a finite dual basis \(\{f_i,a_i\}_{i=1,...,n}\) for \(P\). 
\end{lemma}

For a proof see Remark 2.11 in \cite{Lam1999}.  

\begin{mydef}
  Let \(B\) be a ring. A \(B\)\textit{-coring} \(\mathcal{C}\) is a comonoid object in the monoidal category of \(B\)-bimodules. Spelled out, \(\mathcal{C}\) is a \(B\)-bimodule together with \(B\)-bimodule maps \(\Delta : \mathcal{C} \rightarrow \mathcal{C} \otimes_{B} \mathcal{C}\) and \(\varepsilon : \mathcal{C} \rightarrow B\) such that the following diagrams commute:
  \begin{equation*}
    \begin{tikzcd}
      \Co \ar{r}{\Delta} \ar[swap]{d}{\Delta} & \Co \otimes_A \Co \ar{d}{\Delta \otimes 1} && \Co \ar{r}{\Delta} \ar{dr}{1} \ar[swap]{d}{\Delta} & \Co \otimes_A \Co \ar{d}{\varepsilon \otimes 1}\\
    \Co \otimes_A \Co \ar[swap]{r}{1 \otimes \Delta} & \Co \otimes_A \Co \otimes_A \Co && \Co \otimes_A \Co \ar[swap]{r}{1 \otimes \varepsilon} & \Co
    \end{tikzcd}
  \end{equation*}
  We call the pair \((B,\mathcal{C})\) a bocs to be consistent with both the literature on representation theory and Hopf algebras.
\end{mydef}

\begin{lemma}\label{Lemma:leftdecomp}
  Let \(M,N\) be \(B\)-bimodules. Then we have a morphism of \(B\)-bimodules
  \begin{equation*}
    \begin{tikzcd}
      \gamma : \Hom_{B\text{-}}(N,B) \otimes_{B} \Hom_{B\text{-}}(M,B) \ar{r} & \Hom_{B\text{-}}(M \otimes_{B} N,B)
    \end{tikzcd}
  \end{equation*}
  given by \(\sigma \mapsto [m \otimes_{B} n \mapsto g(mf(n))]\). Moreover, if \(N\) is left finitely generated projective, then \(\gamma\) is an isomorphism with inverse given by \(\sigma \mapsto f_i \otimes_B \sigma(- \otimes_{B} a_i)\), where \(\{a_i,f_i\}\) is dual basis for \(N\). 
\end{lemma}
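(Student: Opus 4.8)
The plan is to verify directly that the proposed map $\gamma$ is well-defined and $B$-bilinear, and then, under the finiteness hypothesis on $N$, to check that the explicit formula involving a dual basis is a two-sided inverse. First I would confirm that $\gamma$ is well-defined: given $f \in \Hom_{B\text{-}}(N,B)$ and $g \in \Hom_{B\text{-}}(M,B)$, the assignment $m \otimes_B n \mapsto g(m f(n))$ must be checked to be balanced over $B$, i.e.\ that sending $(mb, n)$ and $(m, bn)$ to the same element, which follows since $f$ is a left $B$-module map so $f(bn) = f(n)$ applied after the $b$ has been absorbed appropriately — one has to be a little careful about which side $B$ acts on, using that $f$ and $g$ are \emph{left} module maps into $B$ and that the $B$-bimodule structure on $\Hom_{B\text{-}}(N,B)$ is the one coming from the right action on the source and the (here trivial-on-the-relevant-side) structure on $B$. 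I would also check that the whole expression is balanced over the $\otimes_B$ appearing in the domain $\Hom_{B\text{-}}(N,B) \otimes_B \Hom_{B\text{-}}(M,B)$, which is where the interplay $g(m f(n)) $ with $f \mapsto fb$, $g \mapsto b g$ gets used. Finally $\gamma$ is a map of $B$-bimodules for the evident outer actions, again a routine check.

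Next, assuming $N$ is left finitely generated projective, fix a left finite dual basis $\{a_i, f_i\}$ for $N$, so $n = f_i(n) a_i$ for all $n \in N$. Define $\delta : \Hom_{B\text{-}}(M \otimes_B N, B) \to \Hom_{B\text{-}}(N,B) \otimes_B \Hom_{B\text{-}}(M,B)$ by $\sigma \mapsto f_i \otimes_B \sigma(- \otimes_B a_i)$. I would show $\gamma \circ \delta = \ide$: for $\sigma$ and $m \otimes_B n$ we get $\gamma(\delta(\sigma))(m \otimes_B n) = \sigma(m f_i(n) \otimes_B a_i) = \sigma(m \otimes_B f_i(n) a_i) = \sigma(m \otimes_B n)$, using the dual basis identity and the fact that $\sigma$ is balanced over $\otimes_B$. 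For $\delta \circ \gamma = \ide$: starting from $f \otimes_B g$, we have $\delta(\gamma(f \otimes_B g)) = f_i \otimes_B \big[m \mapsto g(m f(a_i))\big]$; since $f(a_i) \in B$ and $g$ is a left $B$-module map, $g(m f(a_i)) = g(m) f(a_i)$ (again being careful about sidedness — this is where one uses that $g$ is a \emph{left} module map so the $B$-element $f(a_i)$ slides out on the right), so the second tensor factor equals $g(-) f(a_i)$, and moving the scalar $f(a_i)$ across the $\otimes_B$ gives $f_i f(a_i) \otimes_B g = f(f_i(\lline)\,\cdot) $... more precisely $f_i \cdot f(a_i) = \big[n \mapsto f_i(n) f(a_i)\big]$ summed over $i$, which by $B$-linearity of $f$ and the dual basis identity $f_i(n) a_i = n$ equals $\big[n \mapsto f(f_i(n) a_i)\big] = f$. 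Hence $\delta(\gamma(f \otimes_B g)) = f \otimes_B g$.

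The main obstacle is bookkeeping of the $B$-bimodule structures and the sidedness of the module maps: the domain of $\gamma$ is a tensor product over $B$ of two \emph{left}-Hom spaces, each of which carries both a residual left and right $B$-action, and one must pin down exactly which actions are used to form $\otimes_B$ and which survive to make $\gamma$ a bimodule map, then track the same data through $\delta$. There is nothing deep here, but it is the kind of computation where a sign-free but side-sensitive error is easy to make, so I would set up notation for the bimodule structures once at the start and then the verifications above go through mechanically. Independence of the construction $\delta$ from the choice of dual basis is automatic once $\delta$ is shown to be a two-sided inverse of $\gamma$, since inverses are unique.
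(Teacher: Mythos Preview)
The paper does not prove this lemma at all; it simply defers to Lemma~3.4 in Sweedler. Your direct verification is therefore more than the paper supplies, and your overall strategy---check well-definedness and $B$-bilinearity of $\gamma$, then verify that the dual-basis formula gives a two-sided inverse---is exactly the right one and matches what one finds in the cited reference.

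One genuine slip: in the $\delta \circ \gamma$ computation you write $g(mf(a_i)) = g(m)f(a_i)$ and justify it by $g$ being a left module map. That equation is false in general; left linearity of $g$ says $g(bm) = bg(m)$, not $g(mb) = g(m)b$. What you actually need, and what makes the rest of your computation go through, is to recognise that $[m \mapsto g(mf(a_i))]$ is precisely $f(a_i) \cdot g$ for the \emph{left} $B$-action on $\Hom_{B\text{-}}(M,B)$ given by $(b \cdot g)(m) = g(mb)$; this is the action that balances against the right action $(f \cdot b)(n) = f(n)b$ on $\Hom_{B\text{-}}(N,B)$ in the tensor product. With that identification, $f_i \otimes_B (f(a_i) \cdot g) = (f_i \cdot f(a_i)) \otimes_B g$, and your dual-basis calculation $\sum_i f_i(n) f(a_i) = f(\sum_i f_i(n) a_i) = f(n)$ finishes correctly. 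So the argument is right; only the stated reason for that one step needs to be replaced. This is precisely the ``side-sensitive error'' you warned yourself about.
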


For a proof, see Lemma 3.4 in \cite{Sweedler1975}. 

\begin{lemma}\label{lemma:fdb}
  Let \(\{m_i,f_i\}\) be a left finite dual basis for a left finitely generated projective \(B\)-bimodule \(M\) and let \(\{n_i, g_i\}\) be a left finite dual basis for a left finitely generated projective \(B\)-bimodule \(N\). Then
  \begin{enumerate}
  \item \(\{f_i, \ev_{m_i}\}\) is a right finite dual basis for \(\Hom_{B\text{-}}(M,B)\);
  \item \(\{m_i \otimes_{B} n_j, h_{ij} = [m \otimes_{B} n \mapsto f_i(mg_j(n))]\}\) is a left finite dual basis for \(M \otimes_{B} N\).
  \end{enumerate}
\end{lemma}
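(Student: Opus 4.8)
The plan is to verify both claims by direct computation from the defining identities of the given dual bases, $m = f_i(m)\,m_i$ for $M$ and $n = g_j(n)\,n_j$ for $N$ (with the summation over $i$, resp. $j$, suppressed as in the paper's convention), together with the characterisation that a module is finitely generated projective exactly when it admits a finite dual basis.

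For (1), I would first record that $\Hom_{B\text{-}}(M,B)$ carries a right $B$-action via the right action on the target, $(\varphi\cdot b)(m) = \varphi(m)\,b$, and that $\varphi\cdot b$ is still left $B$-linear since $(\varphi\cdot b)(b'm) = \varphi(b'm)b = b'\varphi(m)b$. Each $f_i$ is an element of $\Hom_{B\text{-}}(M,B)$, and each $\ev_{m_i}$ is right $B$-linear because $\ev_{m_i}(\varphi\cdot b) = \varphi(m_i)\,b = \ev_{m_i}(\varphi)\,b$. The required dual basis identity is $\varphi = f_i\cdot \ev_{m_i}(\varphi)$ for all $\varphi \in \Hom_{B\text{-}}(M,B)$; evaluating the right-hand side at $m$ gives $f_i(m)\,\varphi(m_i) = \varphi\big(f_i(m)\,m_i\big) = \varphi(m)$, using left $B$-linearity of $\varphi$ and the dual basis for $M$. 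Hence $\{f_i,\ev_{m_i}\}$ is a right finite dual basis, and in particular $\Hom_{B\text{-}}(M,B)$ is right finitely generated projective.

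For (2), I would first check that $h_{ij}\colon m\otimes_{B} n \mapsto f_i\big(m\,g_j(n)\big)$ is well defined on the balanced tensor product, since $h_{ij}(mb\otimes_{B} n) = f_i\big(mb\,g_j(n)\big) = f_i\big(m\,g_j(bn)\big) = h_{ij}(m\otimes_{B} bn)$ by left $B$-linearity of $g_j$, and that it is left $B$-linear, since $h_{ij}(bm\otimes_{B} n) = f_i\big(bm\,g_j(n)\big) = b\,h_{ij}(m\otimes_{B} n)$ by left $B$-linearity of $f_i$. The dual basis identity then unwinds in two stages: for a simple tensor,
\begin{equation*}
  h_{ij}(m\otimes_{B} n)\,(m_i\otimes_{B} n_j) = f_i\big(m\,g_j(n)\big)\,m_i\otimes_{B} n_j = \big(m\,g_j(n)\big)\otimes_{B} n_j = m\otimes_{B} g_j(n)\,n_j = m\otimes_{B} n,
\end{equation*}
where the first reduction sums over $i$ using the dual basis for $M$ applied to the element $m\,g_j(n)\in M$, the next equality pushes $g_j(n)\in B$ across $\otimes_{B}$, and the last sums over $j$ using the dual basis for $N$. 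Since both sides are additive in the argument, this establishes the identity on all of $M\otimes_{B} N$, so $\{m_i\otimes_{B} n_j,\,h_{ij}\}$ is a left finite dual basis and $M\otimes_{B} N$ is left finitely generated projective. (Alternatively, (2) follows from Lemma~\ref{Lemma:leftdecomp} together with a dual basis for $M$, but the direct verification is shorter.)

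These computations are elementary; the only point demanding attention is bookkeeping of the several $B$-actions at play — the left and right actions on $M$ and $N$, and the induced actions on $\Hom$-spaces and tensor products — so that each constructed map is checked to respect the balancing relation over $B$ and to be linear on the correct side. I do not anticipate any substantive obstacle beyond this.
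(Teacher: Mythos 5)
Your proposal is correct and follows essentially the same direct computation as the paper's proof: for (1) evaluating \(f_i\cdot\ev_{m_i}(\varphi)\) at \(m\) and using \(m = f_i(m)m_i\), and for (2) applying the dual basis of \(M\) to \(m\,g_j(n)\), sliding \(g_j(n)\) across \(\otimes_B\), and then using the dual basis of \(N\). The extra checks of well-definedness and \(B\)-linearity are routine details the paper omits but do not change the argument.
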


Part (1) is shown in Lemma 3.5 in \cite{Sweedler1975}. We give a proof of both parts here for completeness. 

\begin{proof}\hfill
  \begin{enumerate}
  \item Let \(\varphi \in \Hom_{B\text{-}}(M,B)\) then we have that
    \begin{align*}
      (f_i \cdot \ev_{m_i}(\varphi))(x) &= (f_i \cdot \varphi(m_i))(x) = f_i(x)\varphi(m_i) = \varphi(f_i(x)m_i) = \varphi(x)
    \end{align*}
    for all \(x \in M\). Therefore, \(f_i \cdot \ev_{m_i}(\varphi) = \varphi\) for all \(\varphi \in \Hom_{B\text{-}}(M,B)\). 
  \item Note that
    \begin{align*}
      h_{ij} \cdot m_i \otimes_{B} n_j &= f_i(mg_j(n)) \cdot m_i \otimes_{B} n_j \\
                                       &= f_i(mg_j(n))m_i \otimes_{B} n_j \\
                                       &= mg_j(n) \otimes_{B} n_j \\
                                       &= m \otimes_{B} g_j(n)n_j = m \otimes_{B} n. 
    \end{align*}
    Therefore \(\{m_i \otimes_{B} n_j, [m \otimes_{B} n \mapsto f_i(mg_j(n))]\}\) is a left finite dual basis for \(M \otimes_B N\).\qedhere
  \end{enumerate}
\end{proof}

Let \(A\) be a ring and let \(B \subseteq A\) be a subring. Then \(\mathcal{C} = \Hom_{B\text{-}}(A,B)\) can be given the structure of a \(B\)-bimodule by \((b_1 \cdot f \cdot b_2)(x) = b_1f(b_2x)\). We define the map \(\varepsilon : \mathcal{C} \rightarrow B\) by \(f \mapsto f(1)\). Further, if \(A\) is finitely generated projective as a right module, then we define \(\Delta : \mathcal{C} \rightarrow \mathcal{C}\otimes_{B}\mathcal{C}\) to be the \(B\)-bimodule map that makes the following diagram commute
\begin{equation*}
  \begin{tikzcd}
    \mathcal{C} \ar[equal]{r}{} \ar[swap]{d}{\Delta} & \Hom_{\text{-}B}(A,B) \ar{d}{(\text{mult})^*} \\
    \mathcal{C} \otimes_{B} \mathcal{C} & \Hom_{\text{-}B}(A \otimes_{B} A,B) \ar[]{l}{\gamma^{-1}}
  \end{tikzcd}
\end{equation*}
where \(\gamma^{-1}\) is the inverse of the isomorphism given in Lemma~\ref{Lemma:leftdecomp}. We call \(\mathcal{C}\) the \textit{right dual coring} associated to \(B \subseteq A\). The dual coring was first defined by Sweedler in \cite{Sweedler1975}. For a proof that this is indeed a coring, see Theorem 3.7 in \cite{Sweedler1975}. 

\hfill

Given a \(B\)-coring \(\mathcal{C}\), we can give the \(B\)-bimodule \(\Hom_{B\text{-}}(\mathcal{C},B)\) an algebra structure by letting \(f \cdot g \in \Hom_{B\text{-}}(\mathcal{C},B)\) be the following composition
\begin{equation*}
  \begin{tikzcd}
    f \cdot g : \mathcal{C} \ar{r}{\Delta} & \mathcal{C} \otimes_{B} \mathcal{C} \ar{r}{1 \otimes_B f} & \mathcal{C} \ar{r}{g} & B.
  \end{tikzcd}
\end{equation*}
Note that \(\varepsilon : \mathcal{C} \rightarrow B\) is a unit for the multiplication. For a proof, see Proposition 3.2 in \cite{Sweedler1975}. The \textit{right Burt-Butler algebra} of the coring \(\mathcal{C}\) or \textit{the opposite of the left dual algebra}, is defined to be \(R := \Hom_{B\text{-}}(\mathcal{C},B)^{\op}\), see Section 1 in \cite{BurtButler1991} or Section 3 in \cite{Sweedler1975} respectively. Note that we have that \(B\) is a subalgebra of \(R\). If \(B \subseteq A\) is an extension of a \(k\)-algebra such that \(A\) is finitely generated projective as a right \(B\)-module, then the right algebra \(R\) of the dual coring is isomorphic to \(A\). See Theorem 3.7 in \cite{Sweedler1975} for a proof. 

\hfill

Let \(\mathcal{C}\) be a \(B\)-coring. A \(\mathcal{C}\)-\textit{bicomodule} is a \(B\)-bimodule together with \(B\)-bimodule maps \(\lambda_M : M \rightarrow \mathcal{C} \otimes_{B} M\) and \(\rho_M : M \rightarrow M \otimes_{B} \mathcal{C}\) such that the following diagrams commute
\[\begin{tikzcd}
	M & {M \otimes_B \mathcal{C}} & M & {M \otimes_B \mathcal{C}} & {\phantom{A}} \\
	{M \otimes_B \mathcal{C}} & {M \otimes_B \mathcal{C} \otimes_B \mathcal{C}} && M & {\phantom{A}} \\
	M & {\mathcal{C} \otimes_B M} & M & {\mathcal{C} \otimes_B M} & {\phantom{A}} \\
	{\mathcal{C} \otimes_B M} & {\mathcal{C} \otimes_B \mathcal{C} \otimes_B M} && M & {\phantom{A}} \\
	M & {M \otimes_B \mathcal{C}} &&& {\phantom{A}} \\
	{\mathcal{C} \otimes_B M} & {\mathcal{C} \otimes_B M \otimes_B \mathcal{C}} &&& {\phantom{A}}
	\arrow["{\rho_M}", from=1-1, to=1-2]
	\arrow["{\rho_M}"', from=1-1, to=2-1]
	\arrow["{1 \otimes_B \Delta}"', from=2-1, to=2-2]
	\arrow["{\rho_M \otimes_B 1}", from=1-2, to=2-2]
	\arrow[from=1-3, to=2-4]
	\arrow["{\rho_M}", from=1-3, to=1-4]
	\arrow["{1 \otimes_B \varepsilon}", from=1-4, to=2-4]
	\arrow["{\lambda_M}", from=3-1, to=3-2]
	\arrow["{\lambda_M}"', from=3-1, to=4-1]
	\arrow["{\Delta \otimes_B 1}"', from=4-1, to=4-2]
	\arrow["{1 \otimes_B \lambda_M}", from=3-2, to=4-2]
	\arrow[from=3-3, to=4-4]
	\arrow["{\varepsilon \otimes_B 1}", from=3-4, to=4-4]
	\arrow["{\lambda_M}", from=3-3, to=3-4]
	\arrow["{\rho_M}", from=5-1, to=5-2]
	\arrow["{\lambda_M}"', from=5-1, to=6-1]
	\arrow["{1 \otimes_B \rho_M}"', from=6-1, to=6-2]
	\arrow["{\lambda_M \otimes_B 1}", from=5-2, to=6-2]
	\arrow["{\text{Right comodule}}", draw=none, from=1-5, to=2-5]
	\arrow["{\text{Left comodule}}", draw=none, from=3-5, to=4-5]
	\arrow["{\text{Compatibility}}", draw=none, from=5-5, to=6-5]
\end{tikzcd}\]

We call the map \(\lambda\) the \textit{left coaction} and \(\rho\) the \textit{right coaction}. Note that \(\mathcal{C}\) is a \(\mathcal{C}\)-bicomodule with the left and right coaction being the comultiplication.

\hfill

Let \(M\) and \(N\) be two \(\mathcal{C}\)-bicomodules and let \(f : M \rightarrow N\) be a morphism of the underlying \(B\)-bimodules. Then \(f : M \rightarrow N\) is a morphism of \(\mathcal{C}\)-bicomodules if the following diagrams commute

\[\begin{tikzcd}[column sep = large]
	M & N & {} & M & N \\
	{\mathcal{C} \otimes_B M} & {\mathcal{C} \otimes_B M} && {M \otimes_B \mathcal{C}} & {N \otimes_B \mathcal{C}}
	\arrow["f", from=1-1, to=1-2]
	\arrow["{\lambda_M}"', from=1-1, to=2-1]
	\arrow["{\lambda_N}", from=1-2, to=2-2]
	\arrow["f", from=1-4, to=1-5]
	\arrow["{\rho_M}"', from=1-4, to=2-4]
	\arrow["{\rho_N}", from=1-5, to=2-5]
	\arrow["{\ide_{\mathcal{C}} \otimes_B f }"', from=2-1, to=2-2]
	\arrow["{f \otimes_B \ide_{\mathcal{C}}}"', from=2-4, to=2-5]
\end{tikzcd}\]

\begin{prop}\hfill
  \begin{enumerate}
  \item The category \(\mathcal{C}\)-bicomodules has cokernels, which coincide with those in \(B\)-bimodules. 
  \item Let \(f : M \rightarrow N\) be a bicomodule map and let \(\ker{f}\) be its kernel as a \(B\)-bimodule map. If the injection \(\iota : \ker{f} \rightarrow M\) splits as a map of \(B\)-bimodules, then \(\ker{f}\) is a subbicomodule of \(M\). Moreover, it is the kernel of \(f\) in the category of bicomodules. 
  \end{enumerate}
\end{prop}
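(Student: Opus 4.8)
The plan is to establish (1) first and then bootstrap (2) from it.

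\textit{Part (1).} Given a bicomodule map $f\colon M\to N$, let $\pi\colon N\to N/\im f$ be its cokernel in $B$-bimodules; I claim this carries a unique $\mathcal{C}$-bicomodule structure making $\pi$ a bicomodule map. The compatibility relation $\rho_N f=(f\otimes_B\ide_\mathcal{C})\rho_M$ gives $(\pi\otimes_B\ide_\mathcal{C})\rho_N f=(\pi f\otimes_B\ide_\mathcal{C})\rho_M=0$, so $(\pi\otimes_B\ide_\mathcal{C})\rho_N$ kills $\im f=\ker\pi$ and therefore factors as $\bar\rho\circ\pi$ for a unique $B$-bimodule map $\bar\rho\colon N/\im f\to(N/\im f)\otimes_B\mathcal{C}$; symmetrically one obtains $\bar\lambda$. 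Next I would verify the bicomodule axioms for $(N/\im f,\bar\lambda,\bar\rho)$: each is an equality of maps out of $N/\im f$, and precomposing with the epimorphism $\pi$, together with functoriality of $\mathcal{C}\otimes_B-$ and $-\otimes_B\mathcal{C}$, reduces it to the corresponding axiom for $N$. Since $\pi$ is then a bicomodule map with $\pi f=0$, and any bicomodule map $g\colon N\to P$ with $gf=0$ factors through $\pi$ by a unique $B$-bimodule map which is again a bicomodule map (cancel the epimorphism $\pi$ in the defining identity), $N/\im f$ with this structure is the cokernel of $f$ in $\mathcal{C}$-bicomodules, visibly coinciding with the $B$-bimodule cokernel.

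\textit{Part (2).} Let $\iota\colon\ker f\hookrightarrow M$ be the kernel of $f$ in $B$-bimodules, with a $B$-bimodule retraction $s$ satisfying $s\iota=\ide$, and let $q\colon M\to M/\ker f$ be the $B$-bimodule cokernel of $\iota$. By part (1), $M/\ker f$ is a $\mathcal{C}$-bicomodule and $q$ is a bicomodule map. The crucial step is the identity
\[
(q\otimes_B\ide_\mathcal{C})\,\rho_M\,\iota=\rho_{M/\ker f}\,q\,\iota=0
\]
(and its left-hand analogue), coming from $q$ being a bicomodule map and $q\iota=0$. Hence $\rho_M\iota$ takes values in $\ker(q\otimes_B\ide_\mathcal{C})$, which by right-exactness of $-\otimes_B\mathcal{C}$ equals $\im(\iota\otimes_B\ide_\mathcal{C})$. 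Because $s$ is a $B$-bimodule retraction, $\iota\otimes_B\ide_\mathcal{C}$, $\ide_\mathcal{C}\otimes_B\iota$ and all their iterated tensor powers with $\mathcal{C}$ are split monomorphisms, so I may uniquely corestrict $\rho_M\iota$ and $\lambda_M\iota$ along these monomorphisms to obtain $\rho_{\ker f}$ and $\lambda_{\ker f}$ with $(\iota\otimes_B\ide_\mathcal{C})\rho_{\ker f}=\rho_M\iota$ and $(\ide_\mathcal{C}\otimes_B\iota)\lambda_{\ker f}=\lambda_M\iota$. The bicomodule axioms for $\ker f$ then follow from those for $M$ by cancelling the monomorphisms $\iota$, $\iota\otimes_B\ide_\mathcal{C}$, $\ide_\mathcal{C}\otimes_B\iota$ and their iterates, so $\iota$ is a bicomodule map. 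Finally, $\iota$ is the kernel of $f$ in $\mathcal{C}$-bicomodules: any bicomodule map $g\colon L\to M$ with $fg=0$ factors uniquely through $\iota$ in $B$-bimodules, and the factorisation is a bicomodule map by cancelling the monomorphism $\iota\otimes_B\ide_\mathcal{C}$ in the compatibility identity.

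\textit{Main obstacle.} The delicate point is part (2): the naive attempt to conclude directly from $(f\otimes_B\ide_\mathcal{C})\rho_M\iota=0$ that $\rho_M\iota$ lands in $\ker f\otimes_B\mathcal{C}$ fails, since $\ker(f\otimes_B\ide_\mathcal{C})$ may be strictly larger than $\im(\iota\otimes_B\ide_\mathcal{C})$ — there is no left-exactness of $-\otimes_B\mathcal{C}$, and the inclusion $\im f\hookrightarrow N$ need not split over $B$. The resolution is to work with the cokernel map $q$ rather than with $f$ itself: then $q\iota=0$ holds on the nose, and the fact (from part (1)) that $q$ is a bicomodule map is exactly what pins $\rho_M\iota$ inside $\im(\iota\otimes_B\ide_\mathcal{C})$; the splitting hypothesis is used only afterwards, to identify that image with $\ker f\otimes_B\mathcal{C}$.
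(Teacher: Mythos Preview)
Part (1) is fine and is the standard argument; the paper itself gives no proof here but refers to Proposition~1.1 in Guzman, so there is nothing to compare against.

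In part (2) there is a genuine circularity. You write: ``let $q\colon M\to M/\ker f$ be the $B$-bimodule cokernel of $\iota$. By part (1), $M/\ker f$ is a $\mathcal{C}$-bicomodule and $q$ is a bicomodule map.'' But part (1) produces a bicomodule structure on the cokernel of a \emph{bicomodule} map, and at this stage $\iota$ is only a $B$-bimodule map --- endowing $\ker f$ with a bicomodule structure making $\iota$ colinear is precisely the content of (2). The circularity is not cosmetic: for $q$ to be a right $\mathcal{C}$-comodule map one needs $(q\otimes_B\ide_{\mathcal{C}})\rho_M$ to factor through $q$, i.e.\ $(q\otimes_B\ide_{\mathcal{C}})\rho_M\iota=0$; and this identity is \emph{equivalent} to the assertion that $\rho_M\iota$ lands in $\im(\iota\otimes_B\ide_{\mathcal{C}})$, which is exactly what you then deduce from it. So the ``resolution'' described in your final paragraph --- replace $f$ by $q$ --- trades the unproved identity $(f\otimes_B\ide_{\mathcal{C}})\rho_M\iota=0\Rightarrow\rho_M\iota\in\im(\iota\otimes_B\ide_{\mathcal{C}})$ for the equivalent unproved identity $(q\otimes_B\ide_{\mathcal{C}})\rho_M\iota=0$.

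Concretely, right-exactness of $-\otimes_B\mathcal{C}$ gives $\im(\iota\otimes_B\ide_{\mathcal{C}})=\ker(q\otimes_B\ide_{\mathcal{C}})$, while the hypothesis that $f$ is a comodule map gives only $\rho_M\iota\in\ker(f\otimes_B\ide_{\mathcal{C}})$. Writing $f=\bar f\, q$ with $\bar f\colon M/\ker f\hookrightarrow N$ the induced monomorphism, the missing step is the injectivity of $\bar f\otimes_B\ide_{\mathcal{C}}$, and the splitting of $\iota$ says nothing about that. You need an argument that establishes $(q\otimes_B\ide_{\mathcal{C}})\rho_M\iota=0$ without presupposing a bicomodule structure on either $\ker f$ or $M/\ker f$; your current proposal does not supply one.
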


For a proof, see Proposition~1.1 in \cite{Guzman1989}. We denote the set of bicomodule morphism by \(\Hom^{\mathcal{C}-\mathcal{C}}(-,-)\). 

\subsection{Cartier cohomology}
\label{sec:cartier-cohomology}

We recall Cartier cohomology for corings, first defined by Cartier for coalgebras in 1956, see \cite{Cartier1955}, and later extended to corings, see \cite{Guzman1989}. For a more detailed exposition, see Chapter 30 in \cite{CoringsBW}. Let \(B\) be a ring and let \(\mathcal{C}\) be a \(B\)-coring. We say that a complex of \(\mathcal{C}\)-bicomodules is \textit{relative exact} if it is split exact as a complex of \(B\)-bimodules. Similarly, a \textit{relative split monomorphism} is a monomorphism \(f : M \rightarrow N\) of \(\mathcal{C}\)-bicomodules which splits over \(B\).

\begin{mydef}
  A \(\mathcal{C}\)-bicomodule \(I\) is \textit{relative injective} if for every relative split monomorphism \(g : Y \rightarrow Z\) and every \(\mathcal{C}\)-bicomodule map \(\varphi : Y \rightarrow I\) there exists an \(\mathcal{C}\)-bicomodule map \(\psi : Z \rightarrow I\) such that \(\varphi = \psi \circ g\). 
\end{mydef}

\begin{lemma}
  Let \(I\) be a relative injective \(\mathcal{C}\)-bicomodule. Then \(I\) is a summand of a \(\mathcal{C}\)-bicomodule \(\mathcal{C} \otimes_B N \otimes_{B} \mathcal{C}\), where \(N\) is a \(B\)-bimodule.
\end{lemma}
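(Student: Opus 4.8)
The plan is to dualise Hochschild's classical argument for relative projective modules: first exhibit a canonical relative split monomorphism from $I$ into a ``cofree'' $\mathcal{C}$-bicomodule of the required shape, and then use relative injectivity of $I$ to split it off.

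First I would record that for any $B$-bimodule $N$, the $B$-bimodule $\mathcal{C} \otimes_B N \otimes_B \mathcal{C}$ carries a canonical $\mathcal{C}$-bicomodule structure, with left coaction $\Delta \otimes_B \ide_N \otimes_B \ide_{\mathcal{C}}$ and right coaction $\ide_{\mathcal{C}} \otimes_B \ide_N \otimes_B \Delta$. The left- and right-comodule axioms follow at once from coassociativity and counitality of $\mathcal{C}$, and the compatibility square is automatic since the two coactions act on disjoint tensor factors. Call this the cofree bicomodule on $N$. Next, for an arbitrary $\mathcal{C}$-bicomodule $I$, I would consider the $B$-bimodule map
\[
  g_I := (\ide_{\mathcal{C}} \otimes_B \rho_I) \circ \lambda_I \colon I \longrightarrow \mathcal{C} \otimes_B I \otimes_B \mathcal{C},
\]
with target the cofree bicomodule on the underlying $B$-bimodule of $I$. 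I claim $g_I$ is a morphism of $\mathcal{C}$-bicomodules: compatibility with the left coactions is the left-comodule coassociativity $(\Delta \otimes_B \ide_I)\lambda_I = (\ide_{\mathcal{C}} \otimes_B \lambda_I)\lambda_I$ combined with the bicomodule compatibility square, and symmetrically for the right coactions. Equivalently (and this is the slicker route I would indicate), $\mathcal{C} \otimes_B - \otimes_B \mathcal{C}$ is right adjoint to the forgetful functor from $\mathcal{C}$-bicomodules to $B$-bimodules, and $g_I$ is precisely the unit of this adjunction at $I$.

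Then I would check that $g_I$ is a \emph{relative split monomorphism}: the $B$-bimodule map $\varepsilon \otimes_B \ide_I \otimes_B \varepsilon$ satisfies $(\varepsilon \otimes_B \ide_I \otimes_B \varepsilon) \circ g_I = \ide_I$, which unwinds to the right counit axiom $(\ide_I \otimes_B \varepsilon)\circ\rho_I = \ide_I$ followed by the left counit axiom $(\varepsilon \otimes_B \ide_I)\circ\lambda_I = \ide_I$ (under the usual identifications $B \otimes_B I \cong I \cong I \otimes_B B$). In particular $g_I$ is split, hence injective, over $B$. Finally I would invoke the definition of relative injectivity of $I$ applied to the relative split monomorphism $g_I \colon I \to \mathcal{C}\otimes_B I \otimes_B \mathcal{C}$ together with the bicomodule morphism $\varphi = \ide_I \colon I \to I$: this produces a $\mathcal{C}$-bicomodule map $\psi \colon \mathcal{C} \otimes_B I \otimes_B \mathcal{C} \to I$ with $\psi \circ g_I = \ide_I$. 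Thus $I$ is a retract, hence a direct summand, of $\mathcal{C} \otimes_B I \otimes_B \mathcal{C}$ in the category of $\mathcal{C}$-bicomodules, and the claim follows on taking $N = I$.

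The only mildly delicate point is the verification that $g_I$ is a bicomodule morphism and is $B$-split; both are short sign-free diagram chases from the (co)unit and (co)associativity axioms, and either can be carried out by hand or shortened via the adjunction remark. Everything else is formal.
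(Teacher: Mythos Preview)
Your proposal is correct and is exactly the dualisation of Hochschild's argument that the paper has in mind: the paper does not spell out a proof but simply writes ``The proof is similar to Lemma~2 in \cite{Hochschild1956}, we will leave it to the reader.'' Your map $g_I$ is precisely the relative split monomorphism $M \rightarrow \mathcal{C} \otimes_B M \otimes_B \mathcal{C}$ alluded to later in the same subsection when the paper notes that every bicomodule admits a relative injective resolution.
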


The proof is similar to Lemma 2 in \cite{Hochschild1956}, we will leave it to the reader.

\hfill

A \textit{relative injective resolution} of a \(\mathcal{C}\)-bicomodule \(M\) is a complex \((I^{n},d)_{n \geq 0}\) such that 
\begin{equation*}
  \begin{tikzcd}
    0 \ar{r}{} & M \ar{r}{\varepsilon} & I^0 \ar{r}{d^2} & I^1 \ar{r}{d^1} & I^2 \ar{r}{\varepsilon} & ...
  \end{tikzcd}
\end{equation*}
is a relative exact complex and such that \(I^n\) is relative injective for all \(n \geq 0\). Note that every \(\mathcal{C}\)-bicomodule \(M\) has a relative injective resolution by taking the cokernel of the relative split monomorphism \(M \rightarrow \mathcal{C} \otimes_B M \otimes_{B} \mathcal{C}\) given by the bicomodule coaction. We define the \textit{relative Ext functor} to be \(\Ext^n_{(\mathcal{C}\text{-}\mathcal{C}|B\text{-}B)}(M,N) = H^n(\Hom^{\mathcal{C}}(M,I^{\sbu}))\), where \(I^{\sbu}\) is relative injective resolution of \(M\). This is independent of the choice of relative injective resolution, similarly to Section 2 in \cite{Hochschild1956}.

\hfill

We define the \textit{Cartier cohomology of} \(\mathcal{C}\) with coefficients in a bicomodule \(M\) to be \(\Ext^*_{(\mathcal{C}\text{-}\mathcal{C}|B\text{-}B)}(M,\mathcal{C})\). In the case where \(M=\mathcal{C}\), we speak simply about the \textit{Cartier cohomology of} \(\mathcal{C}\). One can compute the Cartier cohomology by the following resolution. The \textit{cobar resolution} \(\Cob(\mathcal{C})\) is the complex given by
\begin{equation*}
  \Cob(\mathcal{C})^n = \mathcal{C}^{\otimes_{B} (n+2)}
\end{equation*}
with differential
\begin{equation*}
  d^n = \sum_{k=0}^{n+1} (-1)^k 1^{\otimes_{B} k} \otimes_{B} \Delta \otimes_{B} 1^{\otimes_{B} (n-k+1)} : \Cob(\mathcal{C})^n \rightarrow \Cob(\mathcal{C})^{n+1}.
\end{equation*}
We have a coaugmentation map given by \(\Delta : \mathcal{C} \rightarrow \mathcal{C} \otimes_{B} \mathcal{C}\). We can apply \(\Hom^{\mathcal{C}\text{-}\mathcal{C}}(M,-)\) to the cobar resolution to get the complex
\begin{equation*}
  \Hom^{\mathcal{C}\text{-}\mathcal{C}}(M,\mathcal{C}^{\otimes_{B} (n+2)}) \cong \Hom_{B\text{-}B}(M,\mathcal{C}^{\otimes_B n})
\end{equation*}
with differential \(d^n : \Hom_{B\text{-}B}(M,\mathcal{C}^{\otimes_{B} n}) \rightarrow \Hom_{B\text{-}B}(M,\mathcal{C}^{\otimes_{B}(n+1)})\) given by 
\begin{equation*}
  d^n(f) = (1 \otimes_{B} f) \circ \lambda_M + \sum_{k = 1}^{n}(-1)^k(1^{\otimes_{B} k} \otimes_{B} \Delta \otimes_{B} 1^{\otimes_{B} (n-k)}) \circ f + (-1)^{n+1} (f \otimes_{B} 1) \circ \rho_M.
\end{equation*}
By taking the cohomology, we get the Cartier cohomology. Let \(C^n(\mathcal{C}, M)\) denote the Cartier \(n\)-cochains and let \(C^n(\mathcal{C}) := C^n(\mathcal{C},\mathcal{C})\). 

\begin{prop}[Proposition~30.8 in \cite{CoringsBW}]\label{prop:CartierBinfinity}
  The Cartier cochain complex \(C^n(\mathcal{C})\) is equal to \(\CoEndOp(\mathcal{C})\), where \((\mathcal{C}, \Delta, \varepsilon)\) is seen as a comonoid in the category of \(B\)-bimodules. Moreover, it has a multiplication given by \(\Delta \in \CoEndOp(\mathcal{C})(2)\). Therefore, \(C^n(\mathcal{C})\) has the structure of a \(B_{\infty}\)-algebra and \(\Hh_{\Ca}^*(\mathcal{C})\) is a Gerstenhaber algebra.   
\end{prop}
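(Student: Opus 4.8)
The plan is to identify the Cartier cochain complex with the total space of the coendomorphism operad of $\mathcal{C}$, regarded as a comonoid in $B$-bimodules, and then to quote Theorem~\ref{thm:OperadB} and Proposition~\ref{prop:BgivesG}. First I would use the hom--tensor adjunction already recorded above, the natural isomorphism
\begin{equation*}
  \Hom^{\mathcal{C}\text{-}\mathcal{C}}(\mathcal{C},\mathcal{C}^{\otimes_B(n+2)}) \cong \Hom_{B\text{-}B}(\mathcal{C},\mathcal{C}^{\otimes_B n}),
\end{equation*}
which comes from the fact that $\mathcal{C}^{\otimes_B(n+2)} = \mathcal{C}\otimes_B\mathcal{C}^{\otimes_B n}\otimes_B\mathcal{C}$ is the cofree $\mathcal{C}$-bicomodule on the $B$-bimodule $\mathcal{C}^{\otimes_B n}$, to identify $C^n(\mathcal{C})$ with $\CoEndOp(\mathcal{C})(n) = \mathbf{D}(\mathcal{C},\mathcal{C}^{\otimes n})$, where $\mathbf{D}$ denotes the monoidal category of $B$-bimodules. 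Since $(\mathcal{C},\Delta,\varepsilon)$ is a comonoid in $\mathbf{D}$, the counit axioms supply the operadic unit constraints, and the coassociativity axiom says precisely that the element $\Delta\in\CoEndOp(\mathcal{C})(2)$ satisfies $\Delta\{\Delta\}=0$: unwinding $\Delta\{\Delta\}=\Delta\circ_0\Delta-\Delta\circ_1\Delta$ with the indexing convention of $\CoEndOp$ gives the relation $(\Delta\otimes\ide)\Delta=(\ide\otimes\Delta)\Delta$. Hence $(\CoEndOp(\mathcal{C}),\Delta)$ is an operad with multiplication.

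The second step is to check that the brace $B_{\infty}$-structure produced by Theorem~\ref{thm:OperadB} on $\bigoplus_{n\geq 0}\CoEndOp(\mathcal{C})(n)$ has underlying complex exactly the Cartier complex, i.e.\ that its differential $d=[\Delta,-]$ agrees with the cobar differential $d^n$ displayed above, and that its cup product $f\smile g$ is the convolution product $(f\otimes g)\circ\Delta$ of Cartier cochains. Expanding $[\Delta,f]=\Delta\{f\}-(-1)^{|\Delta||f|}f\{\Delta\}$ for $f\in\CoEndOp(\mathcal{C})(n)$, the two summands of $\Delta\{f\}$ are the partial compositions $\Delta\circ_i f$, which under the above identification become the outer terms $(1\otimes_B f)\circ\lambda_{\mathcal{C}}$ and $(f\otimes_B 1)\circ\rho_{\mathcal{C}}$ with $\lambda_{\mathcal{C}}=\rho_{\mathcal{C}}=\Delta$, while $f\{\Delta\}=\sum_k\pm\,f\circ_k\Delta$ produces the inner terms $(1^{\otimes_B k}\otimes_B\Delta\otimes_B 1^{\otimes_B(n-k)})\circ f$. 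The only real content here is the comparison of signs, using $|\Delta|=1$ and $|f|=n-1$, between the Koszul signs built into the operadic brace on $\CoEndOp(\mathcal{C})$ and the explicit signs in the formula for $d^n$; I expect this bookkeeping to be the main obstacle, since it must be carried out term by term (and similarly for the cup product).

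Granting the identification, Theorem~\ref{thm:OperadB} gives that $C^*(\mathcal{C})=\bigoplus_{n\geq 0}\CoEndOp(\mathcal{C})(n)$ is a brace $B_{\infty}$-algebra, and Proposition~\ref{prop:BgivesG} endows its cohomology $H^*(C^*(\mathcal{C}),d)=\Hh_{\Ca}^*(\mathcal{C})$ with a Gerstenhaber algebra structure, whose bracket and cup product are the ones induced operadically. Finally, I would note that although $\mathbf{D}$ is not symmetric, this causes no difficulty: by construction $\CoEndOp(\mathcal{C})$ is an operad valued in $\mathbf{Vec}_k$, using only the $k$-linear enrichment of $\mathbf{D}$, so Theorem~\ref{thm:OperadB} applies verbatim.
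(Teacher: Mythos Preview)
Your proposal is correct and follows exactly the paper's approach: the paper's proof is the single sentence ``This follows from Theorem~\ref{thm:OperadB} and Proposition~\ref{prop:BgivesG}.'' Your write-up simply unpacks what the paper leaves implicit---the identification $C^n(\mathcal{C})=\Hom_{B\text{-}B}(\mathcal{C},\mathcal{C}^{\otimes_B n})=\CoEndOp(\mathcal{C})(n)$ is already stated just before the proposition, and the fact that $\Delta$ furnishes a multiplication on $\CoEndOp(\mathcal{C})$ is recorded in the example defining the (co)endomorphism operad---so the sign bookkeeping you flag as the ``main obstacle'' is precisely what the paper absorbs into those prior set-ups without further comment.
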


\begin{proof}
  This follows from Theorem~\ref{thm:OperadB} and Proposition~\ref{prop:BgivesG}.
\end{proof}

\subsection{Differential graded Lie algebras and Maurer-Cartan elements}

We recall the definition of differential graded Lie algebras and the basics of deformation theory following Chapter 6 in \cite{BarmeierWang2023}. Throughout, we work over a field \(k\) of characteristic zero. 

\begin{mydef}
  A \textit{differential graded (dg) Lie algebra} is a triple \((\mathfrak{g},d,[-,-])\), where \(\mathfrak{g}\) is a \(\Z\)-graded vector space, \(d : \mathfrak{g} \rightarrow \mathfrak{g}\) is a linear map of degree \(1\) such that \(d^2 = 0\), and \([-,-] : \mathfrak{g} \otimes \mathfrak{g} \rightarrow \mathfrak{g}\) is a map of degree 0 such that the following properties are satisfied:
  \begin{enumerate}
  \item \([x,y] = (-1)^{|x||y|+1}[y,x]\); 
  \item \(d([x,y]) = [d(x),y]+(-1)^{|x|}[x,d(y)]\);
  \item \((-1)^{|x||z|}[x,[y,z]] + (-1)^{|y||x|}[y,[z,x]] + (-1)^{|z||y|}[z,[x,y]] = 0\).
  \end{enumerate}
\end{mydef}

\begin{ex}
  Let \((A,m_n,\mu_{pq})\) be a \(B_{\infty}\)-algebra and let
  \begin{equation*}
    [f,g] = (-1)^{|f|}\mu_{1,1}(f,g) - (-1)^{(|f|-1)(|g|-1)+|g|}\mu_{1,1}(g,f)
  \end{equation*}
  be the associated bracket. Then \((sA,d=m_1,[-,-])\) is a dg Lie algebra. 
\end{ex}

\begin{mydef}
  Let \(\mathfrak{g}\) be a dg Lie algebra. A \textit{Maurer-Cartan element in} \(\mathfrak{g}\) is a element \(x\) of degree \(1\) such that \(d(x) + \frac{1}{2}[x,x] = 0\). We denote the set of Maurer-Cartan elements by \(\mathbf{MC}(\mathfrak{g})\). 
\end{mydef}

\begin{ex}
  Let \(\mathbf{D}\) be a \(k\)-linear monodial category, let \(V\) be an object in \(\mathbf{D}\) and let \((\EndOp(V),-\{-\})\) be the associated endomorphism operad with its brace algebra structure. Then \(\mu \in \EndOp(V)(2) = \mathbf{D}(V \otimes V,V)\) is associative if and only if \(\mu \in \mathbf{MC}(\EndOp(V),[-,-])\). 
\end{ex}

\begin{mydef}
  Let \(\varphi : B \rightarrow A\) be an extension of \(k\)-algebras. A \textit{formal deformation} of \(A\) is a \(k[[t]]\)-linear map
  \begin{equation*}
    \mu_t : A[[t]] \otimes_{k[[t]]} A[[t]] \rightarrow A[[t]]
  \end{equation*}
  such that \(\mu_t = \mu + \mu_1t + \mu_2t^2 + ...\), where \(\mu\) is the multiplication for \(A\) and all \(\mu_i : A \otimes A \rightarrow A\) are linear maps such that \((A[[t]],\mu_t)\) is an algebra over \(k[[t]]\). The deformation is \(B\)\textit{-relative} if \(\mu_t(a,\varphi(b)) = \mu(a,\varphi(b))\) and \(\mu_t(\varphi(b),a) = \mu(\varphi(b),a)\).  
\end{mydef}

By general theory of deformations for dg Lie algebra, we get the following 

\begin{prop}\label{prop:deformationMC}
  There is a one-to-one correspondence
  \begin{equation*}
    \begin{tikzcd}
      \{B\text{-relative deformations}\} \ar[leftrightarrow]{r}{1:1} & \{\mathbf{MC}(sC^*(R|B) \otimes (t))\}
    \end{tikzcd}
  \end{equation*}
  where \(C^*(R|B)\) denotes the relative Hochschild cochains and \((t)\) is the maximal ideal of the power series ring \(k[[t]]\). 
\end{prop}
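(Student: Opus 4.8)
The plan is Gerstenhaber's classical correspondence between associative deformations and Maurer--Cartan elements, carried out relatively and read off the $B_\infty$-structure of the relative Hochschild complex. Applying the definition of a formal deformation to the $k$-algebra $R$ with its canonical subalgebra $B$, write a $B$-relative deformation as $\mu_t = \mu + \bar\mu$ with $\bar\mu := \mu_t - \mu = \sum_{i \geq 1}\mu_i t^i$, where $\mu$ is the multiplication of $R$. The assignment $\mu_t \mapsto \bar\mu$ is visibly a bijection from $k[[t]]$-bilinear maps of the prescribed shape onto elements of $C^2(R|B)[[t]]$ vanishing at $t = 0$, \emph{once} one shows that the axioms of a $B$-relative deformation are equivalent to the conjunction of (i) $\mu_i \in C^2(R|B)$ for every $i$, and (ii) $\bar\mu \in \mathbf{MC}(sC^*(R|B) \otimes (t))$. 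So the proof reduces to establishing (i) and (ii) in both directions, the inverse of the correspondence then being $\bar\mu \mapsto \mu + \bar\mu$.

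For (i): specialising one of the three entries in the associativity identity for $\mu_t$ to $\varphi(b)$ with $b \in B$, and using $\mu_t(a,\varphi(b)) = a\varphi(b)$, $\mu_t(\varphi(b),a) = \varphi(b)a$, one reads off that $\mu_t$ is $\varphi(B)$-balanced and left and right $\varphi(B)$-linear; comparing coefficients of $t^i$ gives the same for each $\mu_i$, so $\mu_i$ descends to a $B$-bimodule map $R \otimes_B R \to R$, i.e. $\mu_i \in C^2(R|B)$. Since $\varphi(1_B) = 1_R$, the relative condition further forces $\mu_t(a,1) = a = \mu_t(1,a)$, hence $\mu_i(a \otimes_B 1) = \mu_i(1 \otimes_B a) = 0$ for $i \geq 1$ --- which for a $B$-bimodule map out of $R \otimes_B R$ is exactly vanishing on $\varphi(B)$ in the appropriate slot, so the $\mu_i$ land in the \emph{normalised} relative Hochschild $2$-cochains; since the inclusion of the normalised into the full relative complex is a $B_\infty$-quasi-isomorphism we suppress this distinction in what follows. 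Conversely, if every $\mu_i$ is such a normalised relative cochain then $\mu_t = \mu + \bar\mu$ satisfies the relative axioms, by the same computation.

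For (ii): by Proposition~\ref{prop:BgivesG} and the examples following the definition of a dg Lie algebra, $sC^*(R|B)$ carries the dg Lie structure with differential $m_1$ and bracket $[-,-]$ built from $\mu_{1,1}$, while on $C^*(R|B) = \EndOp(R)$, viewed as a brace algebra, the multiplication satisfies $\mu\{\mu\} = 0$ precisely because $R$ is associative. Associativity of $\mu_t$ is the identity $(\mu + \bar\mu)\{\mu + \bar\mu\} = 0$; expanding and cancelling $\mu\{\mu\}$ leaves $\mu\{\bar\mu\} + \bar\mu\{\mu\} + \bar\mu\{\bar\mu\} = 0$, and since $|\mu| = |\bar\mu| = 1$ in the operadic desuspension one has $m_1(\bar\mu) = [\mu,\bar\mu] = \mu\{\bar\mu\} + \bar\mu\{\mu\}$ and $\frac{1}{2}[\bar\mu,\bar\mu] = \bar\mu\{\bar\mu\}$, so this is exactly the Maurer--Cartan equation $m_1(\bar\mu) + \frac{1}{2}[\bar\mu,\bar\mu] = 0$. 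All expressions are finite modulo each power of $t$ since $(t)$ is $t$-adically complete, so there is no convergence issue; conversely any Maurer--Cartan element, automatically of degree $1$ and hence of the form $\bar\mu = \sum_{i \geq 1}\mu_i t^i$ with $\mu_i \in C^2(R|B)$, returns an associative $\mu_t = \mu + \bar\mu$ which is $B$-relative by (i). Combined with (i) this yields the asserted bijection.

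The substance of the proof is entirely bookkeeping rather than ideas: reconciling the Koszul signs produced by expanding $(\mu + \bar\mu)\{\mu + \bar\mu\}$ with the precise normalisations of the brace bracket and of the Gerstenhaber bracket fixed in Proposition~\ref{prop:BgivesG} (in particular the factor $\frac{1}{2}$ and the identity $[\bar\mu,\bar\mu] = 2\,\bar\mu\{\bar\mu\}$), and keeping the relative and unitality conditions straight --- above all the equivalence between "normalised in one argument" and "vanishing on $\varphi(B)$ in that argument" for $B$-bimodule maps $R \otimes_B R \to R$. Everything else is the standard order-by-order argument identifying associative deformations with Maurer--Cartan elements of the (relative, $B_\infty$-packaged) Hochschild complex, so a reference to the general deformation theory of dg Lie algebras --- e.g. Chapter~6 of \cite{BarmeierWang2023} --- together with the verification of (i) completes the argument.
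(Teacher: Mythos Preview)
The paper does not actually prove this proposition: its entire ``proof'' is the single sentence ``For a proof of the general case, see Proposition~1.59 in \cite{DotsenkoShadrinVallette2024}.'' Your proposal therefore goes well beyond what the paper offers, spelling out the classical Gerstenhaber correspondence in the relative setting; indeed your closing remark (that a reference to the general deformation theory of dg Lie algebras, together with the verification of (i), would complete the argument) is essentially what the paper does in toto.

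Your argument is correct. One point you raise is worth flagging explicitly: you observe that the $B$-relative condition, as the paper defines it, forces each $\mu_i$ to vanish whenever one argument lies in $\varphi(B)$, so that the literal bijection is with Maurer--Cartan elements of the \emph{normalised} relative complex rather than the full $C^*(R|B)$. Your identification of ``vanishing on $1$'' with ``vanishing on $\varphi(B)$'' for $B$-bimodule maps out of $R \otimes_B R$ is correct and is the right way to see this. Invoking the quasi-isomorphism between the normalised and full complexes is fine at the level of deformation functors (equivalence classes of Maurer--Cartan elements), but does not on its own give a set-theoretic bijection with $\mathbf{MC}(sC^*(R|B)\otimes(t))$ as literally stated. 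This is a subtlety in the paper's formulation rather than in your reasoning, and the paper's citation-only treatment does not address it either.
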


For a proof of the general case, see Proposition 1.59 in \cite{DotsenkoShadrinVallette2024}.  

\begin{prop}\label{prop:opMC}
  There is a one-to-one correspodence
  \begin{equation*}
    \begin{tikzcd}
      \{B\text{-relative deformations of } R\} \ar[leftrightarrow]{r}{1:1} & \{B^{\op}\text{-relative deformations of } R^{\op}\}
    \end{tikzcd}
  \end{equation*}
  given by sending a deformation \(\phi_t = \phi + \phi_1t + ...\) to the deformation \(\phi_t^{\op} = \phi^{\op} + \phi_1^{\op}t + ...\), which therefore induces a one-to-one correspondence
  \begin{equation*}
    \begin{tikzcd}
      \{\mathbf{MC}(sC^*(R|B) \otimes (t))\} \ar[leftrightarrow]{r}{1:1} & \{\mathbf{MC}(sC^*(R^{\op}|B^{\op}) \otimes (t))\}.
    \end{tikzcd}
  \end{equation*}
\end{prop}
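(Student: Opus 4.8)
The plan is to reduce the statement to a purely formal fact about opposite multiplications, and then transport it along Proposition~\ref{prop:deformationMC}. First I would unwind the definitions: a $B$-relative formal deformation of $R$ is a $k[[t]]$-bilinear associative multiplication $\phi_t = \phi + \phi_1 t + \phi_2 t^2 + \dots$ on $R[[t]]$ that reduces to the multiplication $\phi$ of $R$ modulo $t$ and satisfies $\phi_t(a,b) = \phi(a,b)$ whenever $a \in B$ or $b \in B$. Given such a $\phi_t$, I would set $\phi_t^{\op}(x,y) := \phi_t(y,x)$, so that $\phi_t^{\op} = \phi^{\op} + \phi_1^{\op} t + \phi_2^{\op} t^2 + \dots$ with $\phi_i^{\op}(x,y) := \phi_i(y,x)$, and then verify the three required properties. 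Associativity of $\phi_t^{\op}$ is a one-line computation from associativity of $\phi_t$, namely $\phi_t^{\op}(\phi_t^{\op}(x,y),z) = \phi_t(z,\phi_t(y,x)) = \phi_t(\phi_t(z,y),x) = \phi_t^{\op}(x,\phi_t^{\op}(y,z))$; $k[[t]]$-bilinearity is inherited, and $\phi_t^{\op}$ reduces modulo $t$ to the multiplication $\phi^{\op}$ of $R^{\op}$; finally $B^{\op}$-relativity holds because $B^{\op}$ and $B$ have the same underlying set, so $\phi_t^{\op}(x,b) = \phi_t(b,x) = \phi(b,x) = \phi^{\op}(x,b)$ and symmetrically on the other side. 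Since $\phi_t \mapsto \phi_t^{\op}$ is visibly an involution, it is a bijection between the two sets of relative deformations.

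For the second one-to-one correspondence I would simply compose the known identifications. Proposition~\ref{prop:deformationMC}, applied to the extension $B \subseteq R$, identifies $B$-relative deformations of $R$ with $\mathbf{MC}(sC^*(R|B) \otimes (t))$ via $\phi + \sum_{i \geq 1}\phi_i t^i \mapsto \sum_{i \geq 1}\phi_i t^i$; applied to the extension $B^{\op} \subseteq R^{\op}$ it identifies $B^{\op}$-relative deformations of $R^{\op}$ with $\mathbf{MC}(sC^*(R^{\op}|B^{\op}) \otimes (t))$. Composing the first identification, the involution $\phi_t \mapsto \phi_t^{\op}$, and the inverse of the second identification then produces the claimed bijection, which is explicitly $\sum_{i \geq 1} \phi_i t^i \mapsto \sum_{i \geq 1} \phi_i^{\op} t^i$.

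The step I expect to need the most care is bookkeeping of conventions rather than any genuine obstacle: one must confirm that the relative Hochschild cochain complex of $R^{\op}$ with respect to the subalgebra $B^{\op}$ is indeed the intended target, and that passing to the opposite multiplication is compatible with the sign conventions hidden in the Maurer–Cartan equation on $sC^*$. The latter is precisely the content of the Example following the definition of Maurer–Cartan elements, since $\phi_t^{\op}$ being an associative $k[[t]]$-algebra structure is equivalent to $\sum_{i \geq 1}\phi_i^{\op} t^i$ being a Maurer–Cartan element of $sC^*(R^{\op}|B^{\op}) \otimes (t)$, so no separate sign verification beyond that Example is required.
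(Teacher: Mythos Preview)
Your proposal is correct. The paper actually states this proposition without proof, so your argument supplies exactly the straightforward verifications the author left implicit: that $\phi_t\mapsto\phi_t^{\op}$ is an involution preserving associativity and the relativity condition, and that composing with Proposition~\ref{prop:deformationMC} on both sides yields the bijection of Maurer--Cartan sets.
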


\section{Cartier cohomology and relative Hochschild cohomology}
\label{sec:cart-cohom-relat}

Throughout, let \(k\) be a field, \(B\) a \(k\)-algebra, \(\mathcal{C}\) a \(B\)-coring and \(R\) be the right algebra of \(\mathcal{C}\).

\hfill

We recall the functors described in Chapter 19 in \cite{CoringsBW}. Let \(M\) be a right \(\mathcal{C}\)-comodule with structure map \(\rho_M\). We can define a left \(\Hom_{B\text{-}}(\mathcal{C},B)\)-module structure on \(\Hom_{B\text{-}}(M,B)\) by 
\begin{equation*}
  \begin{tikzcd}
    r \cdot x: M \ar{r}{\rho} & M \otimes \mathcal{C} \ar{r}{1 \otimes r} & M \ar{r}{x} & B.
  \end{tikzcd}
\end{equation*}
for all elements \(r \in \ld{\mathcal{C}}\) and all elements \(x \in \Hom_{B\text{-}}(M,B)\). Note that \(r\cdot x\) is equal to the composition

\begin{equation*}
  \begin{tikzcd}[column sep = large]
    \Hom_{B\text{-}}(\mathcal{C}, B) \otimes_{B} \Hom_{B\text{-}}(M,B) \ar{r}{\gamma} & \Hom_{B\text{-}}(M \otimes_{B} \mathcal{C},B) \ar{r}{\Hom_{B\text{-}}(\rho,B)} & \Hom_{B\text{-}}(M,B)
  \end{tikzcd}
\end{equation*}

where \(\gamma\) is the map defined in Lemma~\ref{Lemma:leftdecomp}. Similarly, if \(M\) is a left \(\mathcal{C}\)-comodule, we get a right \(\Hom_{B\text{-}}(\mathcal{C},B)\)-module structure on \(\Hom_{B\text{-}}(M,B)\) by

\begin{equation*}
  \begin{tikzcd}
    x \cdot r : M \ar{r}{\lambda} & \mathcal{C} \otimes_{B} M \ar{r}{1 \otimes_{B} x} & \mathcal{C} \ar{r}{r} & B. 
  \end{tikzcd}
\end{equation*}

Thus, as \(R = \Hom_{B\text{-}}(\mathcal{C},B)^{\op}\), every left \(\mathcal{C}\)-comodule gives a left \(R\)-module and every right \(\mathcal{C}\)-comodule gives a right \(R\)-module.

\begin{prop}
  We have a faithful functor
  \begin{equation*}
    F_R : \mathcal{C}\Bicomod \rightarrow R\Bimod
  \end{equation*}
  defined by \(M \mapsto \Hom_{B\text{-}}(M,B)\) and \(f \mapsto \Hom_{B\text{-}}(f,B)\). 
\end{prop}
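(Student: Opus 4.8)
The plan is to verify, in order, that $F_R$ is well defined on objects, well defined on morphisms, functorial, and faithful.

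\emph{Objects.} First I would fix a $\mathcal{C}$-bicomodule $M$, write $\rho_M(m) = m_{(0)} \otimes_B m_{(1)}$ and $\lambda_M(m) = m_{(-1)} \otimes_B m_{(0)}$ in Sweedler notation, and equip $\Hom_{B\text{-}}(M,B)$ with the left $R$-action $(r \cdot x)(m) := r\big(m_{(-1)}\, x(m_{(0)})\big)$ coming from $\lambda_M$ and the right $R$-action $(x \cdot r)(m) := x\big(m_{(0)}\, r(m_{(1)})\big)$ coming from $\rho_M$, exactly as recalled in the displays preceding the proposition (using that $R = \ld{\mathcal C}^{\op}$, so a right $\ld{\mathcal C}$-action is a left $R$-action and conversely). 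That $r \cdot x$ and $x \cdot r$ lie again in $\Hom_{B\text{-}}(M,B)$ is immediate from the $B$-bilinearity of $\rho_M$ and $\lambda_M$; associativity of each action is the coassociativity square of the corresponding coaction combined with the definition of the multiplication of $\ld{\mathcal C}$; and unitality is the counit axiom together with the fact that $\varepsilon$ is the unit of $\ld{\mathcal C}$. That the two actions commute — i.e. that $\Hom_{B\text{-}}(M,B)$ is an $R$-bimodule — is precisely the compatibility square $(\lambda_M \otimes_B 1)\rho_M = (1 \otimes_B \rho_M)\lambda_M$. I would organise the whole check by writing the two actions as the composites $\Hom_{B\text{-}}(\rho_M,B) \circ \gamma$ and $\Hom_{B\text{-}}(\lambda_M,B) \circ \gamma$ with $\gamma$ the map of Lemma~\ref{Lemma:leftdecomp}, so that each identity above becomes the image, under a $\Hom_{B\text{-}}(-,B)$-functor, of one of the bicomodule axioms, and the calculations stay routine.

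\emph{Morphisms and functoriality.} Next, for a bicomodule morphism $f : M \to N$ I would check that $\Hom_{B\text{-}}(f,B)$ is a morphism of $R$-bimodules: precomposing the defining composite of the left action on $\Hom_{B\text{-}}(N,B)$ with $f$ and using that $f$ intertwines $\lambda_M$ with $\lambda_N$ gives $\Hom_{B\text{-}}(f,B)(r \cdot x) = r \cdot \Hom_{B\text{-}}(f,B)(x)$, and the right-action identity is symmetric via the $\rho$'s. Preservation of identities and of composition is then inherited verbatim from the (contravariant) functor $\Hom_{B\text{-}}(-,B)$ on $B$-bimodules; in particular $F_R$ is contravariant, which is exactly what produces the interchange of $M$ and $N$ in Theorem~\ref{thm:A}.

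\emph{Faithfulness.} This is the step I expect to be the main obstacle. Given bicomodule morphisms $f \neq g : M \to N$, I want $\Hom_{B\text{-}}(f,B) \neq \Hom_{B\text{-}}(g,B)$; replacing $f$ by $f-g$, it suffices to produce, for any nonzero bicomodule morphism $f$, a $B$-linear functional $x : N \to B$ with $x \circ f \neq 0$. The clean case is the one actually needed in Theorem~\ref{thm:A}, namely when the underlying left $B$-modules are finitely generated projective: then, given $m$ with $f(m) \neq 0$, an appropriate component of a finite dual basis of $N$ supplies such an $x$, and more structurally the double-duality map $N \to \Hom_{\text{-}B}(\Hom_{B\text{-}}(N,B),B)$ is an isomorphism, so $F_R$ is faithful — indeed fully faithful — on that subcategory. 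At full generality the substantive content of faithfulness is precisely the separation of points of the $\mathcal{C}$-bicomodules in play by their $B$-linear functionals (equivalently, $B$-torsionlessness of their underlying modules), so I would either record this as a standing hypothesis or restrict the statement to $B$-reflexive bicomodules, which is harmless for every later use.
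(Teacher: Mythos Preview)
Your treatment of objects and morphisms is essentially the paper's own argument, just spelled out in more detail: the paper also writes the actions as composites factoring through $\gamma$ (see the display just before the proposition) and verifies bimodule compatibility directly from the bicomodule compatibility square, exactly as you propose. For the morphism check and functoriality the paper simply cites Proposition~19.1 in \cite{CoringsBW}; your direct verification is the same computation that reference encodes.

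On faithfulness you are more careful than the paper, and rightly so. The paper disposes of faithfulness by the same citation to \cite{CoringsBW}~19.1, but that result concerns the functor which is the identity on underlying $B$-modules, not the $\Hom_{B\text{-}}(-,B)$-dual; your observation that faithfulness of $F_R$ amounts to separation of points by left $B$-linear functionals is correct, and this genuinely can fail without a finiteness hypothesis (e.g.\ take $B=k[x]$, $\mathcal{C}=B$ the trivial coring, and $M=B/(x)$: then $\Hom_{B\text{-}}(M,B)=0$, so $F_R$ annihilates $\ide_M$). Your proposed fix --- restricting to left finitely generated projective bicomodules, where double duality gives full faithfulness --- is exactly what the paper itself does from Lemma~\ref{lemma:dualityfgp} onward, and is all that the subsequent applications need. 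So your proposal is correct, matches the paper on the substantive compatibility check, and is in fact more honest about the scope of the faithfulness claim.
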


\begin{proof}
  First, we need to show that the left and right action are compatible, i.e that
  \begin{equation*}
    r \cdot (x \cdot t) = (r \cdot x) \cdot t.
  \end{equation*}
  We note that \(r \cdot (x \cdot t)\) is equal to the composition
  \begin{equation*}
    \begin{tikzcd}
      M \ar{r}{\rho} & M \otimes_{B} \mathcal{C} \ar{r}{1 \otimes_{B} r} & M \ar{r}{\lambda} & \mathcal{C} \otimes_{B} M \ar{r}{1 \otimes_{B} x} & \mathcal{C} \ar{r}{t} & B.
    \end{tikzcd}
  \end{equation*}
  Moreover, by coassociativity the composition
  \begin{equation*}
    \begin{tikzcd}
      M \ar{r}{\rho} & M \otimes_{B} \mathcal{C} \ar{r}{1 \otimes_{B} r} & M \ar{r}{\lambda} & \mathcal{C} \otimes_{B} M
    \end{tikzcd}
  \end{equation*}
  is equal to the composition
  \begin{equation*}
    \begin{tikzcd}[column sep = large]
      M \ar{r}{\rho} & M \otimes_{B} \mathcal{C} \ar{r}{\lambda} & \mathcal{C} \otimes_{B} M \otimes_{B} \mathcal{C} \ar{r}{1 \otimes_{B} 1 \otimes_{B} r} & \mathcal{C} \otimes_{B} M. 
    \end{tikzcd}
  \end{equation*}
  Therefore, as \(M\) was assumed to be a \(\mathcal{C}\)-bicomodule, the actions are compatible. Let \(f : M \rightarrow N\) be a morphism of \(\mathcal{C}\)-bicomodules. It follows from Proposition 19.1 in \cite{CoringsBW} that \(F_R(f)\) is a morphism of \(R\)-bimodules and that \(F_R\) is faithful.
\end{proof}

\begin{prop}\label{prop:rightRelExact}
  The functor \(F_R : \mathcal{C}\Bicomod \rightarrow R\Bimod\) sends relative exact complexes to relative exact complexes. 
\end{prop}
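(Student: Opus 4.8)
The plan is to reduce both instances of ``relative exact'' to contractibility over \(B\)-bimodules, and then exploit that \(F_R\), after composing with the forgetful functors down to \(B\)-bimodules, is nothing but the additive contravariant functor \(\Hom_{B\text{-}}(-,B)\), which automatically preserves contractible complexes.

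First I would unwind the definitions. A complex of \(\mathcal{C}\)-bicomodules is relative exact exactly when it is split exact as a complex of \(B\)-bimodules, equivalently when it is contractible as a complex of \(B\)-bimodules, i.e.\ admits a chain contraction \(h\) consisting of \(B\)-bimodule maps \(h_n \colon X_n \to X_{n+1}\) with \(\ide_{X_n} = d h_n + h_{n-1} d\). Likewise, since the relevant subalgebra on the target side is \(B \subseteq R\), a complex of \(R\)-bimodules is \((R^e|B^e)\)-relative exact exactly when it is contractible as a complex of \(B\)-bimodules. So, fixing a relative exact complex \((X_\sbu, d)\) of \(\mathcal{C}\)-bicomodules together with a \(B\)-bimodule contraction \(h\), it suffices to exhibit a \(B\)-bimodule contraction of \(F_R(X_\sbu)\).

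Second I would record the one compatibility that makes the argument go through: for any \(B\)-bimodule \(M\), the \(B\)-bimodule underlying the \(R\)-bimodule \(F_R(M) = \Hom_{B\text{-}}(M,B)\) (obtained by restriction along \(B \hookrightarrow R\)) is precisely the standard \(B\)-bimodule \(\Hom_{B\text{-}}(M,B)\). This is a routine check from the description of the \(R\)-action recalled before Proposition~\ref{prop:rightRelExact}, using the counit axioms \((\varepsilon \otimes_B 1)\lambda_M = \ide_M\) and \((1 \otimes_B \varepsilon)\rho_M = \ide_M\); cf.\ Chapter 19 in \cite{CoringsBW}. In particular, although \(h_n\) is merely a \(B\)-bimodule map and not a bicomodule map, \(\Hom_{B\text{-}}(h_n,B) \colon F_R(X_{n+1}) \to F_R(X_n)\) is still a morphism of \(B\)-bimodules.

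Finally, I would apply the additive contravariant functor \(\Hom_{B\text{-}}(-,B)\) to the identity \(\ide_{X_\sbu} = d h + h d\). Contravariance together with additivity turns this into \(\ide_{F_R(X_\sbu)} = \Hom_{B\text{-}}(h,B)\,\Hom_{B\text{-}}(d,B) + \Hom_{B\text{-}}(d,B)\,\Hom_{B\text{-}}(h,B)\); here \(\Hom_{B\text{-}}(d,B)\) is the differential of \(F_R(X_\sbu)\) (an \(R\)-bimodule map, since \(d\) is a bicomodule map and \(F_R\) is a functor to \(R\)-bimodules), and, by the previous step, \(\Hom_{B\text{-}}(h,B)\) consists of \(B\)-bimodule maps. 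Hence \(F_R(X_\sbu)\) is contractible as a complex of \(B\)-bimodules, i.e.\ \((R^e|B^e)\)-relative exact; the reversal of arrows caused by contravariance is harmless, contractibility being self-dual. The only non-formal ingredient is the identification of \(B\)-bimodule structures in the second step — I expect that to be the main (though minor) obstacle; once it is granted, the statement is just the general fact that an additive functor preserves contractible complexes.
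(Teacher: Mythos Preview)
Your proposal is correct and follows essentially the same route as the paper: the paper encodes your ``second step'' as the commutativity of the square
\[
\begin{tikzcd}
\mathcal{C}\Bicomod \ar{r}{F_R} \ar[swap]{d}{U} & R\Bimod \ar{d}{U} \\
B\Bimod \ar[swap]{r}{\Hom_{B\text{-}}(-,B)} & B\Bimod
\end{tikzcd}
\]
and then invokes, as you do, that the additive functor \(\Hom_{B\text{-}}(-,B)\) preserves split exact (equivalently, contractible) complexes. Your write-up simply makes explicit the contracting homotopy and the counit computation behind the commutativity of that square.
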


\begin{proof}
  Note that the following diagram commutes, where the \(U\)s are the forgetful functors.
  \begin{equation*}
    \begin{tikzcd}[column sep = large]
      \mathcal{C}\Bicomod \ar{r}{F_R} \ar[swap]{d}{U} & R\Bimod \ar{d}{U} \\
      B\Bimod \ar[swap]{r}{\Hom_{B\text{-}}(-,B)} & B\Bimod 
    \end{tikzcd}
  \end{equation*}
  Therefore it suffices to show that \(\Hom_{B\text{-}}(-,B)\) sends relative exact complexes to relative exact complexes, which then implies that \(F_R\) fulfills the property by the commutativity of the diagram. Let \((X_{\sbu},d)\) be a split exact complex of \(B\)-bimodules. As \(\Hom_{B\text{-}}(-,B)\) is an additive functor, we have that \(\Hom_{B\text{-}}(X_{\sbu},B)\) is a split exact complex of \(B\)-bimodules. Therefore, \(\Hom_{B\text{-}}(-,B)\) sends relative exact complexes to relative exact complexes. 
\end{proof}
  
\begin{prop}\label{prop:extduality}
  Let \(M\) and \(N\) be two \(\mathcal{C}\)-comodules. We have a natural map
  \begin{equation*}
    \begin{tikzcd}
      \Ext^*_{(\mathcal{C}\text{-}\mathcal{C}|B\text{-}B)}(M,N) \rightarrow \Ext^*_{(A^e|B^e)}(F_R(N),F_R(M))
    \end{tikzcd}
  \end{equation*}
  of \(k\)-vector spaces. 
\end{prop}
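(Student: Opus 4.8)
The approach will be to exploit that $F_R$ is contravariant, faithful, and — by Proposition~\ref{prop:rightRelExact} — carries relative exact complexes to relative exact complexes, so that applying it to a relative injective resolution of $N$ yields a relative exact resolution of $F_R(N)$ over the right algebra $R$ (this $R$ is the $A$ of the statement, $R$-bimodules being $A^e$-modules). Concretely, I will first fix a relative injective resolution $0 \to N \xrightarrow{\iota} I^0 \to I^1 \to \cdots$ of $N$ as a $\mathcal{C}$-bicomodule — for instance the iterated cobar resolution built from the relative split monomorphisms $X \to \mathcal{C} \otimes_B X \otimes_B \mathcal{C}$ — so that $\Ext^n_{(\mathcal{C}\text{-}\mathcal{C}|B\text{-}B)}(M,N) = H^n\!\big(\Hom^{\mathcal{C}\text{-}\mathcal{C}}(M, I^\bullet)\big)$ by definition. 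Applying the additive functor $F_R$ and invoking Proposition~\ref{prop:rightRelExact}, the complex
\[ \cdots \longrightarrow F_R(I^1) \longrightarrow F_R(I^0) \xrightarrow{\,F_R(\iota)\,} F_R(N) \longrightarrow 0, \qquad F_R(I^n)\ \text{in homological degree } n, \]
is a relative exact complex of $A^e$-modules, that is, a relative — though in general not relative projective — resolution of $F_R(N)$.

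Since $F_R$ converts post-composition with $I^n \to I^{n+1}$ into pre-composition with $F_R(I^{n+1}) \to F_R(I^n)$, the rule $f \mapsto F_R(f)$ defines a morphism of cochain complexes $\Hom^{\mathcal{C}\text{-}\mathcal{C}}(M, I^\bullet) \to \Hom_{A^e}\!\big(F_R(I^\bullet), F_R(M)\big)$ (with the usual Hom-complex signs), and hence a map $\Ext^n_{(\mathcal{C}\text{-}\mathcal{C}|B\text{-}B)}(M,N) \to H^n\!\big(\Hom_{A^e}(F_R(I^\bullet), F_R(M))\big)$. Next I will pick a relative projective resolution $P_\bullet \to F_R(N)$ of $F_R(N)$ as an $A^e$-module, which exists as recalled in Section~\ref{sec:relat-hochsch-cohom}, so that the target group is $H^n\!\big(\Hom_{A^e}(P_\bullet, F_R(M))\big)$. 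By the comparison theorem for relative projective resolutions, the identity of $F_R(N)$ lifts to a chain map $\varphi_\bullet : P_\bullet \to F_R(I^\bullet)$ over $F_R(N)$, unique up to chain homotopy; applying $\Hom_{A^e}(-, F_R(M))$ and taking cohomology gives $H^n\!\big(\Hom_{A^e}(F_R(I^\bullet), F_R(M))\big) \to \Ext^n_{(A^e|B^e)}(F_R(N), F_R(M))$, and composing with the previous map produces the morphism claimed. Because comparison maps are unique up to homotopy and relative Ext is independent of the chosen resolutions, this composite is independent of $I^\bullet$, $P_\bullet$ and $\varphi_\bullet$; naturality in $M$ and $N$ then follows from functoriality of $F_R$ together with the same uniqueness-up-to-homotopy argument applied to the morphisms of resolutions induced by bicomodule maps $M \to M'$, $N \to N'$.

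Essentially all of this is bookkeeping once Proposition~\ref{prop:rightRelExact} is available; the one genuine point — and the reason one gets only a map, not yet an isomorphism — is that $F_R(I^\bullet)$ need not consist of relative projective $A^e$-modules, so the comparison step genuinely requires $F_R(I^\bullet)$ to be \emph{relative} exact rather than merely exact, which is exactly what Proposition~\ref{prop:rightRelExact} provides. (When moreover $M$ and $N$ are left finitely generated projective over $B$, Lemma~\ref{Lemma:leftdecomp} identifies $F_R$ of the cobar resolution of $N$ with the relative bar resolution of $F_R(N)$, so $\varphi_\bullet$ becomes a quasi-isomorphism and the map an isomorphism — the content of Theorem~\ref{thm:extduality}.)
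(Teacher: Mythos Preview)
Your proposal is correct and follows essentially the same route as the paper: take a relative injective resolution $I^\bullet$ of $N$, apply $F_R$ and use Proposition~\ref{prop:rightRelExact} to see that $F_R(I^\bullet)$ is relative exact over $F_R(N)$, lift the identity along a relative projective resolution $P_\bullet \to F_R(N)$, and compose the functorial map $\Hom^{\mathcal{C}\text{-}\mathcal{C}}(M,I^\bullet)\to\Hom_{A\text{-}A}(F_R(I^\bullet),F_R(M))$ with precomposition by this lift. Your additional remarks on naturality and on why only a map (not an isomorphism) results are accurate elaborations of points the paper leaves implicit.
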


\begin{proof}
  Let \(I^{\sbu}\) be a relative injective resolution of \(N\). By Proposition~\ref{prop:rightRelExact}, \(F_R(I^{\sbu})\) is a relative exact complex. Let \(P_{\sbu}\) be a relative projective resolution of \(F_R(N)\). By lifting the identity morphism, we get a map 
  \begin{equation*}
    \begin{tikzcd}
      P_{\sbu} \ar{r}{} \ar[swap]{d}{\varphi} & F_R(N) \ar{d}{\ide_{F_R(N)}} \\
      F_R(I^{\sbu}) \ar{r}{} & F_R(N)
    \end{tikzcd}
  \end{equation*}
  Therefore we get a map of chain complexes by composition
  \begin{equation*}
    \begin{tikzcd}
      \Hom^{\mathcal{C}\text{-}\mathcal{C}}(M,I^{\sbu}) \ar{r}{(F_R)_{M,I^{\sbu}}} & \Hom_{A\text{-}A}(F_R(I^{\sbu}),F_R(M)) \ar{r}{} & \Hom_{A\text{-}A}(P_{\sbu},F_R(M))
    \end{tikzcd}
  \end{equation*}
  Note that the map is natural up to homotopy by construction. Thus, we get an induced map of \(\Ext\)-groups. 
\end{proof}

By letting \(M=N=\mathcal{C}\), we get the following result. 

\begin{cor}\label{cor:mapRightAlgebra}
  Let \(\mathcal{C}\) be a \(B\)-coring and let \(R\) be the right algebra. Then we have a map
  \begin{equation*}
    \begin{tikzcd}
      \Hh_{\Ca}^*(\mathcal{C}) \ar{r}{} & \HH^*(R|B)
    \end{tikzcd}
  \end{equation*}
  of \(k\)-vector spaces.
\end{cor}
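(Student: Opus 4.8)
The plan is to obtain this map purely as the special case \(M = N = \mathcal{C}\) of Proposition~\ref{prop:extduality}, once the two Ext-groups that appear there are identified with the cohomology theories named in the statement. In particular there is nothing genuinely new to prove beyond two bookkeeping identifications.

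First I would recall that, by the definition in Section~\ref{sec:cartier-cohomology}, the Cartier cohomology of \(\mathcal{C}\) is \(\Hh_{\Ca}^{*}(\mathcal{C}) = \Ext^{*}_{(\mathcal{C}\text{-}\mathcal{C}|B\text{-}B)}(\mathcal{C},\mathcal{C})\), where \(\mathcal{C}\) is the \(\mathcal{C}\)-bicomodule whose left and right coactions are both the comultiplication \(\Delta\). Substituting \(M = N = \mathcal{C}\) into Proposition~\ref{prop:extduality} therefore already produces a natural \(k\)-linear map
\[
  \Hh_{\Ca}^{*}(\mathcal{C}) = \Ext^{*}_{(\mathcal{C}\text{-}\mathcal{C}|B\text{-}B)}(\mathcal{C},\mathcal{C}) \longrightarrow \Ext^{*}_{(R^{e}|B^{e})}\bigl(F_{R}(\mathcal{C}),F_{R}(\mathcal{C})\bigr).
\]
It then remains to recognise the target as \(\HH^{*}(R|B) = \Ext^{*}_{(R^{e}|B^{e})}(R,R)\), i.e.\ to identify the \(R\)-bimodule \(F_{R}(\mathcal{C}) = \Hom_{B\text{-}}(\mathcal{C},B)\) with \(R\) equipped with its regular bimodule structure.

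This last identification is the only point that needs an actual check. By definition of \(F_{R}\), the left \(R = \Hom_{B\text{-}}(\mathcal{C},B)^{\op}\)-action on \(\Hom_{B\text{-}}(\mathcal{C},B)\) is induced by the right coaction of \(\mathcal{C}\), which here is \(\rho = \Delta\); unwinding the definition, this action sends \(r \otimes x\) to \(x \circ (1 \otimes_{B} r) \circ \Delta\), which is precisely the product \(r \cdot x\) in the algebra \(\Hom_{B\text{-}}(\mathcal{C},B)\), hence right multiplication in \(R\). Dually, the right \(R\)-action on \(\Hom_{B\text{-}}(\mathcal{C},B)\), induced by the left coaction \(\lambda = \Delta\), is left multiplication in \(R\). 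Thus \(F_{R}(\mathcal{C})\) is the regular \(R\)-bimodule \(R\), the target of the displayed map is \(\HH^{*}(R|B)\), and the corollary follows. I expect the only obstacle to be the side- and sign-bookkeeping forced by the opposite in \(R = \Hom_{B\text{-}}(\mathcal{C},B)^{\op}\); everything else is a direct instantiation of Proposition~\ref{prop:extduality}.
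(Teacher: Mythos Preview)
Your approach is exactly the paper's: the corollary is obtained by specialising Proposition~\ref{prop:extduality} to \(M=N=\mathcal{C}\), and you additionally spell out the identification \(F_R(\mathcal{C})\cong R\) as \(R\)-bimodules that the paper leaves implicit. One small bookkeeping slip: with the paper's conventions the \emph{right} coaction induces the \emph{right} \(R\)-action (and dually for the left), not the other way around, but since both sides come out as the regular action your conclusion \(F_R(\mathcal{C})\cong R\) is unaffected.
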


We now give a sufficient condition for the map in Corollary~\ref{cor:mapRightAlgebra} to be an isomorphism. Therefore, we assume from now on that \(\mathcal{C}\) is finitely generated projective as a left \(B\)-module. This is equivalent to assuming that \(\mathcal{C}\) is a right dual coring of its right algebra. We restrict to the categories of \(\mathcal{C}\)-bicomodules that are finitely generated projective as a left \(B\)-module, respectively the category of \(R\)-bimodules that are finitely generated projective as right \(B\)-modules. We denote these categories by \(\mathcal{C}\Bicomod_{\mathbf{lfgp}}\) and \(R\Bimod_{\mathbf{rfgp}}\) respectively. Note that we get a restricted functor
\begin{equation*}
  D = F_{R\big|\mathcal{C}\Bicomod_{\mathbf{lfgp}}} : \mathcal{C}\Bicomod_{\mathbf{lfgp}} \rightarrow R\Bimod_{\mathbf{rfgp}}
\end{equation*}
by Lemma~\ref{lemma:fdb}. 

\begin{lemma}\label{lemma:dualityBbimod}
  There is a duality
  \begin{equation*}
    D_B : B\Bimod_{\mathbf{lfgp}} \rightarrow B\Bimod_{\mathbf{rfgp}}
  \end{equation*}
  given by \(M \mapsto \Hom_{B\text{-}}(M,B)\). 
\end{lemma}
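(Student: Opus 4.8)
The plan is to produce an explicit quasi-inverse to $D_B$: I will introduce the mirror functor $E_B\colon B\Bimod_{\mathbf{rfgp}}\to B\Bimod_{\mathbf{lfgp}}$, $N\mapsto\Hom_{\text{-}B}(N,B)$, and then show that the canonical evaluation maps are natural isomorphisms $\ide\cong E_B\circ D_B$ and $\ide\cong D_B\circ E_B$, which is exactly what it means for $D_B$ to be a duality.

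\emph{Well-definedness.} For a $B$-bimodule $M$, the set $\Hom_{B\text{-}}(M,B)$ of left $B$-module maps carries a $B$-bimodule structure with $(b\cdot f\cdot b')(m)=f(mb)\,b'$, where the right $B$-action is induced by the right $B$-action on the target $B$ and the left $B$-action by the right $B$-action on the source $M$; this right $B$-action is the one already appearing in Lemma~\ref{lemma:fdb}(1). A direct check shows that $D_B(\varphi)=-\circ\varphi$ is a $B$-bimodule map whenever $\varphi$ is, so $D_B$ is a contravariant functor on $B$-bimodules, and it carries $B\Bimod_{\mathbf{lfgp}}$ into $B\Bimod_{\mathbf{rfgp}}$ precisely by Lemma~\ref{lemma:fdb}(1): a left finite dual basis $\{m_i,f_i\}$ of $M$ gives a right finite dual basis $\{f_i,\ev_{m_i}\}$ of $\Hom_{B\text{-}}(M,B)$. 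Running the same construction with left and right interchanged defines $E_B$ and shows it lands in $B\Bimod_{\mathbf{lfgp}}$.

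\emph{Evaluation is an isomorphism.} For $M\in B\Bimod_{\mathbf{lfgp}}$ consider $\eta_M\colon M\to\Hom_{\text{-}B}\bigl(\Hom_{B\text{-}}(M,B),B\bigr)$, $m\mapsto\ev_m$ with $\ev_m(f)=f(m)$. Since the $f$ are left $B$-linear, $\ev_m$ is right $B$-linear; and a short computation using left $B$-linearity (e.g. $\ev_{bmb'}(f)=f(bmb')=b\,f(mb')$) shows $\eta_M$ is a $B$-bimodule map, evidently natural in $M$. Fixing a left finite dual basis $\{m_i,f_i\}$, the assignment $\psi\mapsto\psi(f_i)m_i$ is a two-sided inverse: one direction is $\ev_m(f_i)m_i=f_i(m)m_i=m$; for the other, using that $\{f_i,\ev_{m_i}\}$ is a right finite dual basis one has $g=f_i\cdot g(m_i)$ in $\Hom_{B\text{-}}(M,B)$, so right $B$-linearity of $\psi$ gives $\ev_{\psi(f_i)m_i}(g)=\psi(f_i)\,g(m_i)=\psi(g)$. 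Hence $\eta\colon\ide\Rightarrow E_B\circ D_B$ is a natural isomorphism, and the same argument with left and right interchanged produces a natural isomorphism $\ide\Rightarrow D_B\circ E_B$. Therefore $D_B$ and $E_B$ are mutually quasi-inverse contravariant functors, i.e. $D_B$ is a duality.

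The argument is essentially bookkeeping; the only point that requires care, and the place where a sloppy treatment would go wrong, is keeping track of which of the two $B$-actions on each $\Hom$-space is in play, so that the evaluation maps are verified to be genuine bimodule maps and the dual-basis identities of Lemma~\ref{lemma:fdb} are invoked on the correct side.
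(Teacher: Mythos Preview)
Your proof is correct and follows essentially the same approach as the paper: both take $\Hom_{\text{-}B}(-,B)$ as the quasi-inverse and invoke Lemma~\ref{lemma:fdb}(1) to see that the functors land in the right subcategories. The paper's argument is terser, simply asserting that the inverse functor is $\Hom_{\text{-}B}(-,B)$, while you spell out the evaluation maps and verify explicitly that they are bimodule isomorphisms---this extra care is justified but not a different strategy.
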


\begin{proof}
  Note that if \(\{f_i,a_i\}\) is a left finite dual basis for \(M\) then \(\{\ev_{a_i}, f_i\}\) a right finite dual basis for \(\Hom_{B\text{-}}(M,B)\) by Lemma~\ref{lemma:fdb}. Therefore, if \(M\) is left finitely generated projective then \(\Hom_{B\text{-}}(M,B)\) is right finitely generated projective. The inverse functor is given by \(\Hom_{\text{-}B}(-,B)\). Therefore, we indeed have a duality. 
\end{proof}

\begin{lemma}\label{lemma:dualityfgp}
  We have that \(D : \mathcal{C}\Bicomod_{\mathbf{lfgp}} \rightarrow R\Bimod_{\mathbf{rfgp}}\) is a duality of categories.
\end{lemma}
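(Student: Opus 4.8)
The plan is to exhibit an explicit quasi-inverse and reduce everything to the $B$-bimodule duality $D_B$ of Lemma~\ref{lemma:dualityBbimod}. Since $D$ is already a well-defined functor, what remains is to build a functor in the opposite direction together with two evaluation isomorphisms. I would set
\begin{equation*}
  E : R\Bimod_{\mathbf{rfgp}} \rightarrow \mathcal{C}\Bicomod_{\mathbf{lfgp}}, \qquad E(N) = \Hom_{\text{-}B}(N,B),
\end{equation*}
with $E$ acting on morphisms by $\Hom_{\text{-}B}(-,B)$, and then show that $E \circ D \cong \ide$ and $D \circ E \cong \ide$ through the double-dual natural isomorphisms that witness $D_B$.

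First I would equip $E(N)$ with a $\mathcal{C}$-bicomodule structure. Because $\mathcal{C}$ is finitely generated projective as a left $B$-module, Lemma~\ref{lemma:fdb} gives that $\Hom_{B\text{-}}(\mathcal{C},B)$ is finitely generated projective as a right $B$-module and that $\Hom_{\text{-}B}(\Hom_{B\text{-}}(\mathcal{C},B),B) \cong \mathcal{C}$ as $B$-bimodules (Lemma~\ref{lemma:dualityBbimod}). The left $R$-action on $N$, which via $R = \Hom_{B\text{-}}(\mathcal{C},B)^{\op}$ is a right $\Hom_{B\text{-}}(\mathcal{C},B)$-action, I would dualise with $\Hom_{\text{-}B}(-,B)$, using the right-module analogue of the isomorphism $\gamma$ of Lemma~\ref{Lemma:leftdecomp} together with the identification $\Hom_{\text{-}B}(R,B) \cong \mathcal{C}$, to obtain a $B$-bimodule map $\rho : E(N) \rightarrow E(N) \otimes_B \mathcal{C}$; dualising the right $R$-action the same way gives a left coaction $\lambda$. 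Dualising the associativity, unit, and bimodule-compatibility axioms of $N$ then produces, respectively, the coassociativity, counit, and compatibility axioms for $(E(N),\lambda,\rho)$, and $E(N)$ is left finitely generated projective as a $B$-module again by Lemma~\ref{lemma:dualityBbimod}. Functoriality of $E$ follows by dualising the intertwining squares that define a morphism of $R$-bimodules, just as for $F_R$.

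Next I would check the two natural isomorphisms. For $M \in \mathcal{C}\Bicomod_{\mathbf{lfgp}}$ the evaluation map $\ev_M : M \rightarrow \Hom_{\text{-}B}(\Hom_{B\text{-}}(M,B),B) = E(D(M))$ is an isomorphism of $B$-bimodules, natural in $M$, by Lemma~\ref{lemma:dualityBbimod}. The $\mathcal{C}$-coactions on $E(D(M))$ were obtained by dualising the $R$-actions on $D(M)$, which are themselves dualised from the $\mathcal{C}$-coactions on $M$; hence the compatibility of $\ev_M$ with the coactions comes down to the statement that double dualisation is compatible with $\gamma$ and $\gamma^{-1}$, i.e. to unwinding Lemma~\ref{Lemma:leftdecomp}. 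Thus $\ev : \ide \Rightarrow E \circ D$ is a natural isomorphism, and the mirror-image argument, using the duality in the other direction, gives a natural isomorphism $\ide \Rightarrow D \circ E$. Therefore $D$ is a duality of categories.

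The main obstacle I expect is exactly this dualisation bookkeeping: writing the coactions $\lambda$ and $\rho$ on $E(N)$ down explicitly and verifying that dualising the three $R$-bimodule axioms yields precisely the three $\mathcal{C}$-bicomodule axioms, together with checking that the evaluation maps are morphisms of bicomodules rather than merely of $B$-bimodules. The delicate point is keeping all sides and all occurrences of $(-)^{\op}$ coherent, so that a left coaction really dualises to a left module structure and a counit to a unit, since $R = \Hom_{B\text{-}}(\mathcal{C},B)^{\op}$ makes the left and right $\Hom$-dualities interact with the left and right coactions in a way that is easy to get backwards. Once $D_B$ is in hand, everything else is formal.
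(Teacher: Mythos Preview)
Your approach is exactly the paper's: define the inverse as $\Hom_{\text{-}B}(-,B)$, endow it with coactions via the right-module analogue of $\gamma$, and conclude using reflexivity of finitely generated projectives (the paper's proof is in fact much terser and leaves the bookkeeping you describe to the reader). One small slip to fix: tracing the definition of $F_R$, the \emph{left} $R$-action on $N$ should dualise to the \emph{left} coaction $\lambda$ on $E(N)$ and the right $R$-action to the right coaction $\rho$, not the other way around---precisely the ``easy to get backwards'' point you flag in your last paragraph.
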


\begin{proof}
  We have an inverse functor
  \begin{equation*}
    D^{-1} = \Hom_{\text{-}B}(-,B) : R\Bimod_{\mathbf{rfgp}} \rightarrow \mathcal{C}\Bicomod_{\mathbf{lfgp}}
  \end{equation*}
  where the coaction is given using the right module analog of Lemma~\ref{Lemma:leftdecomp}. Moreover, since finitely generated projective modules are reflexive, see Remark 2.11 in \cite{Lam1999}, we have that
  \begin{equation*}
    D \circ D^{-1} \cong \ide_{R\Bimod_{\mathbf{rfgp}}} \text{ and } D^{-1} \circ D \cong \ide_{\mathcal{C}\Bicomod_{\mathbf{lfgp}}}
  \end{equation*}\qedhere
\end{proof}

\begin{lemma}\label{lemma:injtoproj}
  The following diagram commutes
  \begin{equation}\label{diagram:proj}
    \begin{tikzcd}
      \mathcal{C}\Bicomod_{\mathbf{lfgp}} \ar{r}{D} & R\Bimod_{\mathbf{rfgp}} \\
      B\Bimod_{\mathbf{lfgp}} \ar{u}{\mathcal{C} \otimes_{B} - \otimes_{B} \mathcal{C}} \ar[swap]{r}{D_B} & B\Bimod_{\mathbf{rfgp}} \ar[swap]{u}{R \otimes_{B} - \otimes_{B} R}
    \end{tikzcd}
  \end{equation}
  up to natural isomorphism. Therefore, \(D\) restricts to an equivalence between the category of relative injective bicomodules and the category of relative projective bimodules.   
\end{lemma}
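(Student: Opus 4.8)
The plan is to exhibit, for every $B$-bimodule $N$ in $B\Bimod_{\mathbf{lfgp}}$, a natural isomorphism
\[
  D(\mathcal{C}\otimes_B N\otimes_B\mathcal{C})\;=\;\Hom_{B\text{-}}(\mathcal{C}\otimes_B N\otimes_B\mathcal{C},B)\;\cong\;R\otimes_B D_B(N)\otimes_B R ,
\]
which is precisely the assertion that the square commutes up to natural isomorphism, both composites $D\circ(\mathcal{C}\otimes_B-\otimes_B\mathcal{C})$ and $(R\otimes_B-\otimes_B R)\circ D_B$ being contravariant functors $B\Bimod_{\mathbf{lfgp}}\to R\Bimod_{\mathbf{rfgp}}$. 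The only real ingredient is Lemma~\ref{Lemma:leftdecomp}, applied twice.

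First I would record that every $B$-bimodule in sight is left finitely generated projective: $\mathcal{C}$ is by the standing assumption of this subsection, $N$ is by hypothesis, and therefore $\mathcal{C}\otimes_B N$ and $\mathcal{C}\otimes_B N\otimes_B\mathcal{C}$ are as well, by Lemma~\ref{lemma:fdb}(2). Grouping $\mathcal{C}\otimes_B N\otimes_B\mathcal{C}=(\mathcal{C}\otimes_B N)\otimes_B\mathcal{C}$ and applying the inverse of $\gamma$ from Lemma~\ref{Lemma:leftdecomp} (an isomorphism because $\mathcal{C}$ is left finitely generated projective) gives
\[
  \Hom_{B\text{-}}(\mathcal{C}\otimes_B N\otimes_B\mathcal{C},B)\;\cong\;\Hom_{B\text{-}}(\mathcal{C},B)\otimes_B\Hom_{B\text{-}}(\mathcal{C}\otimes_B N,B),
\]
and a second application to $\mathcal{C}\otimes_B N$ (an isomorphism because $N$ is left finitely generated projective) yields
\[
  \Hom_{B\text{-}}(\mathcal{C}\otimes_B N\otimes_B\mathcal{C},B)\;\cong\;\Hom_{B\text{-}}(\mathcal{C},B)\otimes_B\Hom_{B\text{-}}(N,B)\otimes_B\Hom_{B\text{-}}(\mathcal{C},B).
\]
Identifying $\Hom_{B\text{-}}(\mathcal{C},B)$ with the underlying $B$-bimodule of $R=\Hom_{B\text{-}}(\mathcal{C},B)^{\op}$ and $\Hom_{B\text{-}}(N,B)$ with $D_B(N)$ then gives the desired isomorphism, and naturality in $N$ is automatic since $\gamma$ and its inverse are given by the explicit choice-free formulas of Lemma~\ref{Lemma:leftdecomp}.

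For the final sentence I would argue that, within $\mathcal{C}\Bicomod_{\mathbf{lfgp}}$, the relative injective objects are exactly the direct summands of the objects $\mathcal{C}\otimes_B N\otimes_B\mathcal{C}$ with $N\in B\Bimod_{\mathbf{lfgp}}$: each such object is relative injective and lies in $\mathcal{C}\Bicomod_{\mathbf{lfgp}}$ by Lemma~\ref{lemma:fdb}(2), and conversely, if $I$ is relative injective then the canonical relative-split monomorphism $I\to\mathcal{C}\otimes_B I\otimes_B\mathcal{C}$ — a morphism in $\mathcal{C}\Bicomod_{\mathbf{lfgp}}$, again by Lemma~\ref{lemma:fdb}(2) — must split, so $I$ is a summand of $\mathcal{C}\otimes_B I\otimes_B\mathcal{C}$. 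Dually, the relative projective objects of $R\Bimod_{\mathbf{rfgp}}$ are the summands of the $R\otimes_B N'\otimes_B R$ with $N'\in B\Bimod_{\mathbf{rfgp}}$. Since $D$ is a duality (Lemma~\ref{lemma:dualityfgp}) it preserves direct summands, and $D_B$ is a duality which is surjective on isomorphism classes (Lemma~\ref{lemma:dualityBbimod}); combining this with the commuting square just established, $D$ carries the generating relative injectives onto the generating relative projectives, and hence restricts to an anti-equivalence between the full subcategory of relative injective $\mathcal{C}$-bicomodules and the full subcategory of relative projective $R$-bimodules.

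The main obstacle is the bookkeeping of the $B$-bimodule structures: one has to verify that the isomorphisms of Lemma~\ref{Lemma:leftdecomp} are compatible with the bimodule structures on all three tensor factors, and in particular that the two outer copies of $\Hom_{B\text{-}}(\mathcal{C},B)$ assemble to $R\otimes_B(-)\otimes_B R$ and not to some twist of it. This is where the $\mathrm{op}$ in the definition of $R$, which exchanges the left and right $B$-actions, must be tracked with care, using that $\varepsilon$ is the unit of $\Hom_{B\text{-}}(\mathcal{C},B)$; everything else is a routine unwinding of definitions.
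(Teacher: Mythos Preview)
Your argument is correct and follows essentially the same route as the paper: both obtain the natural isomorphism on objects by two applications of the map $\gamma$ from Lemma~\ref{Lemma:leftdecomp} (the paper phrases this as ``applying Lemma~\ref{lemma:fdb} twice''), and then check naturality on morphisms. Your treatment of the concluding sentence about relative injectives versus relative projectives is in fact more detailed than the paper's, which simply asserts it; and your closing remark about tracking the $R$-bimodule (as opposed to merely $B$-bimodule) structure through $\gamma$ identifies a point that the paper leaves implicit.
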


\begin{proof}
  We begin by showing that the diagram commutes on objects. Let \(M\) be a \(B\)-bimodule. Then, going by the upper corner, we have that \(M\) gets mapped to \(\mathcal{C} \otimes_{B} M \otimes_{B} \mathcal{C}\) which then gets mapped to
  \begin{equation*}
    \Hom_{B\text{-}}(\mathcal{C} \otimes_{B} M \otimes_{B} \mathcal{C},B).
  \end{equation*}
  Going by the lower corner, \(M\) gets mapped to \(R \otimes_{B} \Hom_{B\text{-}}(M,B) \otimes_{B} R\). Note that
  \begin{equation*}
    \Hom_{B\text{-}}(\mathcal{C} \otimes_{B} M \otimes_{B} \mathcal{C},B) \cong R \otimes_{B} \Hom_{B\text{-}}(M,B) \otimes_{B} R
  \end{equation*}
  by applying Lemma~\ref{lemma:fdb} twice. Therefore, the diagram commutes on the level of objects. Next we show that it commutes on morphism. Let \(f : M \rightarrow N\) be a \(B\)-bimodule morphism. By the upper corner, \(f\) gets mapped to the morphism \(D(\ide_{\mathcal{C}} \otimes_{B} f \otimes_{B} \ide_{\mathcal{C}})\), whilst by the lower corner, \(f\) gets mapped to \(\ide_{R} \otimes_{B} D(f) \otimes_{B} \ide_R\). As the following diagram commutes
  \begin{equation*}
    \begin{tikzcd}[column sep = 8em]
      D(\mathcal{C} \otimes_{B} N \otimes_{B} \mathcal{C}) \ar{r}{D(\ide_{\mathcal{C}} \otimes_{B} f \otimes_{B} \ide_{\mathcal{C}})} & D(\mathcal{C} \otimes_{B} M \otimes_{B} \mathcal{C}) \\
      R \otimes_{B} D(N) \otimes_{B} R \ar[swap]{r}{\ide_R \otimes_{B} D(f) \otimes_{B} \ide_R} \ar{u}{\gamma} & R \otimes_{B} D(M) \otimes_{B} R \ar[swap]{u}{\gamma}
    \end{tikzcd}
  \end{equation*}
  we have that diagram~(\ref{diagram:proj}) commutes on morphisms as well. 
\end{proof}

\begin{thm}\label{thm:extduality}
  Let \(\mathcal{C}\) be a \(B\)-coring such that \(\mathcal{C}\) is finitely generated as a left \(B\)-module. Then there is a natural isomorphism
  \begin{equation*}
    \Ext^{*}_{(\mathcal{C}\text{-}\mathcal{C}|B\text{-}B)}(M,N) \cong \Ext^*_{(A^e|B^e)}(D(N), D(M))
  \end{equation*}
  for all left finitely generated projective \(\mathcal{C}\)-bicomodules \(M\) and \(N\).  
\end{thm}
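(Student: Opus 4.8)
The plan is to deduce this from the fact, already established in Lemma~\ref{lemma:dualityfgp} and Lemma~\ref{lemma:injtoproj}, that $D$ is a duality which carries relative injective bicomodules to relative projective bimodules, and to check along the way that the resulting isomorphism is the natural map produced in Proposition~\ref{prop:extduality}. First I would fix a relative injective resolution $0 \to N \to I^{\sbu}$ of $N$ inside $\mathcal{C}\Bicomod_{\mathbf{lfgp}}$; the comonad resolution associated to $M \mapsto \mathcal{C} \otimes_B M \otimes_B \mathcal{C}$, whose terms are the iterated $\otimes_B$-products $\mathcal{C}^{\otimes_B (n+1)} \otimes_B N \otimes_B \mathcal{C}^{\otimes_B (n+1)}$, does the job: each term is left finitely generated projective over $B$ by Lemma~\ref{lemma:fdb}, using that $\mathcal{C}$ and $N$ are lfgp and that $\otimes_B$ of lfgp bimodules is again lfgp, and it is a relative injective resolution by the general discussion in Section~\ref{sec:cartier-cohomology}.

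Next I would apply $D$. By Lemma~\ref{lemma:injtoproj} each $D(I^n)$ is a relative projective $R$-bimodule. Since $D$ is a duality (Lemma~\ref{lemma:dualityfgp}) it is exact, and since $D$ intertwines the forgetful functors with $\Hom_{B\text{-}}(-,B)$ — an additive, hence split-exactness preserving, functor, as in the proof of Proposition~\ref{prop:rightRelExact} and Lemma~\ref{lemma:dualityBbimod} — the complex $D(I^{\sbu}) \to D(N)$ is $B$-split exact. Hence $D(I^{\sbu}) \to D(N)$ is a relative projective resolution of $D(N)$, and therefore
\[
  \Ext^*_{(R^e|B^e)}(D(N),D(M)) = H^*\!\big(\Hom_{R\text{-}R}(D(I^{\sbu}),D(M))\big).
\]
Because $D$ is fully faithful and contravariant, $f \mapsto D(f)$ is a natural isomorphism of cochain complexes $\Hom^{\mathcal{C}\text{-}\mathcal{C}}(M,I^{\sbu}) \xrightarrow{\ \cong\ } \Hom_{R\text{-}R}(D(I^{\sbu}),D(M))$ (the contravariance exactly matching the reversal of the resolution), so passing to cohomology identifies the right-hand side above with $\Ext^*_{(\mathcal{C}\text{-}\mathcal{C}|B\text{-}B)}(M,N)$.

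Finally I would check that this isomorphism agrees with the map of Proposition~\ref{prop:extduality}, so that the conclusion of Theorem~\ref{thm:A} holds verbatim: relative Ext does not depend on the chosen relative projective resolution, so in the construction of that map we may take $P_{\sbu} = D(I^{\sbu})$ and $\varphi = \ide_{D(I^{\sbu})}$ (any lift of the identity being chain homotopic to it), whereupon the composite defining the map collapses to $(F_R)_{M,I^{\sbu}} = D_{M,I^{\sbu}}$, which we have just shown is an isomorphism; naturality in $M$ and $N$ is inherited from that of the fully faithful isomorphism. The only genuinely delicate point in the argument is establishing that $D$ sends relative injective resolutions to relative projective resolutions — i.e. that it preserves both $B$-split exactness and the relative (co)projective objects — but both halves are supplied by Lemmas~\ref{lemma:dualityfgp} and~\ref{lemma:injtoproj}, so beyond the sign/contravariance bookkeeping the proof is formal.
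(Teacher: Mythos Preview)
Your argument is essentially the paper's own proof: take a relative injective resolution $I^{\sbu}$ of $N$, use Proposition~\ref{prop:rightRelExact} and Lemma~\ref{lemma:injtoproj} to see that $D(I^{\sbu})$ is a relative projective resolution of $D(N)$, and then invoke the full faithfulness of $D$ from Lemma~\ref{lemma:dualityfgp} to identify the Hom complexes. You add two refinements the paper leaves implicit --- explicitly producing a resolution whose terms stay in $\mathcal{C}\Bicomod_{\mathbf{lfgp}}$ so that Lemma~\ref{lemma:injtoproj} genuinely applies, and verifying that the resulting isomorphism agrees with the map of Proposition~\ref{prop:extduality} --- but the strategy and the key lemmas are identical.
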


\begin{proof}
  Fix two bicomodules \(M\) and \(N\). Let \(I^{\sbu}\) be a relative injective resolution of \(N\). By Proposition~\ref{prop:rightRelExact} and Lemma~\ref{lemma:injtoproj}, we have that \(D(I^{\sbu})\) is a projective resolution of \(D(N)\). By Lemma~\ref{lemma:dualityfgp}, we have that
  \begin{equation*}
    \Hom^{\mathcal{C}\text{-}\mathcal{C}}(M,I^{\sbu}) \cong \Hom_{R-R}(D(I^{\sbu}),D(M)).
  \end{equation*}
  Therefore
  \begin{align*}
    \Ext^{*}_{(\mathcal{C}\text{-}\mathcal{C}|B\text{-}B)}(M,N) \cong H^n(\Hom^{\mathcal{C}\text{-}\mathcal{C}}(M,I^{\sbu})) &\cong H^n(\Hom_{R\text{-}R}(D(I^{\sbu}),D(M))) \\
                                                                                                                             &\cong \Ext^*_{(A^e|B^e)}(D(N), D(M)). \qedhere
  \end{align*}
\end{proof}

By letting \(M=N=\mathcal{C}\), we get the following. 

\begin{cor}
  Let \(\mathcal{C}\) be a \(B\)-coring which is left finitely generated projective. Then
  \begin{equation*}
    \Hh_{\Ca}^*(\mathcal{C}) \cong \HH^*(R|B)
  \end{equation*}
  as \(k\)-vector spaces. 
\end{cor}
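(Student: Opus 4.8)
The plan is to obtain this as the special case $M = N = \mathcal{C}$ of Theorem~\ref{thm:extduality}; essentially all the work is already done there, so what remains is to unwind the two sides.

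First I would note that $\mathcal{C}$, regarded as a $\mathcal{C}$-bicomodule via $\lambda_{\mathcal{C}} = \rho_{\mathcal{C}} = \Delta$, belongs to $\mathcal{C}\Bicomod_{\mathbf{lfgp}}$, since its underlying $B$-bimodule is $\mathcal{C}$, which is left finitely generated projective by hypothesis. Hence Theorem~\ref{thm:extduality} applies with $M = N = \mathcal{C}$ and gives a natural isomorphism
\[
  \Hh_{\Ca}^*(\mathcal{C}) \;=\; \Ext^{*}_{(\mathcal{C}\text{-}\mathcal{C}|B\text{-}B)}(\mathcal{C},\mathcal{C}) \;\cong\; \Ext^{*}_{(R^e|B^e)}\bigl(D(\mathcal{C}),\,D(\mathcal{C})\bigr),
\]
where $D(\mathcal{C}) = \Hom_{B\text{-}}(\mathcal{C},B)$.

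Second, I would identify $D(\mathcal{C})$ with the regular $R$-bimodule $R$. Since $R = \Hom_{B\text{-}}(\mathcal{C},B)^{\op}$ as an algebra, the regular $R$-bimodule is the space $\Hom_{B\text{-}}(\mathcal{C},B)$ equipped with the left and right multiplications of its convolution algebra structure, with the two sides interchanged. On the other hand, unwinding the comodule-induced actions recalled at the beginning of this section: the right $R$-action on $D(\mathcal{C})$ coming from $\rho_{\mathcal{C}} = \Delta$ is $x \mapsto x \circ (1 \otimes_B r) \circ \Delta$, which is exactly left convolution by $r$; and the left $R$-action coming from $\lambda_{\mathcal{C}} = \Delta$ is $x \mapsto r \circ (1 \otimes_B x) \circ \Delta$, which is exactly right convolution by $r$. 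Since $\varepsilon$ is the unit of the convolution algebra, this yields an isomorphism $D(\mathcal{C}) \cong R$ of $R$-bimodules.

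Combining the two steps,
\[
  \Hh_{\Ca}^*(\mathcal{C}) \;\cong\; \Ext^{*}_{(R^e|B^e)}(R,R) \;=\; \HH^*(R|B).
\]
I do not expect a genuine obstacle here. The only point requiring attention is the bookkeeping of the left/right conventions together with the opposite in the definition of $R$ when identifying $D(\mathcal{C}) \cong R$; this is the sign-free verification described above, carried out against the definitions of the comodule-induced module structures on $\Hom_{B\text{-}}(-,B)$ and of the convolution product $f \cdot g = g \circ (1 \otimes_B f) \circ \Delta$.
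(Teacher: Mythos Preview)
Your proposal is correct and follows the same approach as the paper, which simply states that the corollary is obtained by setting \(M = N = \mathcal{C}\) in Theorem~\ref{thm:extduality}. You have additionally spelled out the identification \(D(\mathcal{C}) \cong R\) as \(R\)-bimodules, which the paper leaves implicit; your bookkeeping of the convolution versus opposite actions is accurate.
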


To summarise we have the following diagram

\begin{equation*}
  \begin{tikzcd}
    \mathcal{C}\Bicomod \ar{r}{F_R} & R\Bimod \\
    \mathcal{C}\Bicomod_{\mathbf{lfgp}} \ar{r}{D} \ar[hook]{u}{} \ar[shift left]{d}{U} & R\Bimod_{\mathbf{rfgp}} \ar[hook]{u}{} \ar[swap, shift right]{d}{U} \\
    B\Bimod_{\mathbf{lfgp}} \ar{r}{D_B} \ar[shift left]{u}{\text{Ind}} & B\Bimod_{\mathbf{rfgp}} \ar[shift right, swap]{u}{\text{Ind}}
  \end{tikzcd}
\end{equation*}
where \(D\) is a relative exact duality of categories. 

\section{Gerstenhaber structure for relative Hochschild cohomology and Cartier cohomology}
\label{sec:gerst-struct-relat}

In this section, we show that the isomorphism \(\Hh_{\Ca}^*(\mathcal{C}) \cong \HH^*(R|B)\) lifts to the level of \(B_{\infty}\)-algebras on the chain complexes up to taking the opposite, which then induces that this isomorphism is an isomorphism of Gerstenhaber algebras up to taking the opposite.

\hfill

Let \(\mathcal{C}\) be a coring and let \(\CoEndOp(\mathcal{C})\) be the associated coendomorphism operad. We claim that we have a map of operads with multiplication
\begin{equation*}
  R^{\op}(n) : \CoEndOp(\mathcal{C})(n) \rightarrow \EndOp(R^{\op})(n)
\end{equation*}
by
\begin{equation*}
  f \mapsto D_B(f) \circ \gamma
\end{equation*}
where \(\gamma\) is the map given in Lemma~\ref{Lemma:leftdecomp}. For any map \(g : \mathcal{C} \rightarrow \mathcal{C}^{\otimes_B n}\), we define
\begin{equation*}
  g^{i,m} := 1^{\otimes_B (m-i-1)} \otimes_B g \otimes_B 1^{\otimes_B i} : \mathcal{C}^{\otimes_B m} \rightarrow \mathcal{C}^{\otimes_B (m+n-1)}.
\end{equation*}
Dually, for any map \(h : D(\mathcal{C})^{\otimes_B n} \rightarrow D(\mathcal{C})\), we define
\begin{equation*}
  h_{i,m} := 1^{\otimes_B i} \otimes_B h \otimes_B 1^{\otimes_B (m-i-1)} : D(\mathcal{C})^{\otimes_B (m+n-1)} \rightarrow D(\mathcal{C})^{\otimes_B m}.
\end{equation*}
Given an element \(f_1 \otimes_{B} ... \otimes_{B} f_{n} \in D(\mathcal{C})^{\otimes_B n}\) we define
\begin{equation*}
  f_{n \rightarrow 1}(c_{1 \rightarrow n}) := f_m(c_1f_{m-1}(c_2...f_1(c_m)...)).
\end{equation*}
Note that \(\gamma(f_1 \otimes ... \otimes f_{n})(c_1 \otimes ... \otimes c_{n}) = f_{n \rightarrow 1}(c_{1 \rightarrow n})\). 

\begin{lemma}\label{lemma:compAlg}
  Let \(g : \mathcal{C} \rightarrow \mathcal{C}^{\otimes_B n}\) be a map. The following diagram commutes
  \begin{equation*}
    \begin{tikzcd}[column sep = 8em]
      D(\mathcal{C}^{\otimes_B (m+n-1)}) \ar{r}{D(g^{i,m})} & D(\mathcal{C}^{\otimes_B m}) \\
      D(\mathcal{C})^{\otimes_B (m+n-1)} \ar{u}{\gamma} \ar[swap]{r}{[D(g) \circ \gamma]_{i,m}} & D(\mathcal{C})^{\otimes_B m} \ar[swap]{u}{\gamma}
    \end{tikzcd}
  \end{equation*}
  i.e. \(D(g^{i,m}) \circ \gamma = \gamma \circ [D(g) \circ \gamma]_{i,m}\), where by abuse of notation the vertical maps are given by iterated applications of \(\gamma\). 
\end{lemma}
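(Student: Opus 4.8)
The plan is to verify the identity $D(g^{i,m}) \circ \gamma = \gamma \circ [D(g)\circ\gamma]_{i,m}$ by a direct computation. Since all the maps involved are additive, $D(\mathcal{C})^{\otimes_B(m+n-1)}$ is spanned by elementary tensors $\phi_1 \otimes_B \cdots \otimes_B \phi_{m+n-1}$ with $\phi_j \in D(\mathcal{C}) = \Hom_{B\text{-}}(\mathcal{C},B)$, and two elements of $D(\mathcal{C}^{\otimes_B m}) = \Hom_{B\text{-}}(\mathcal{C}^{\otimes_B m},B)$ agree once they agree on every $c_1 \otimes_B \cdots \otimes_B c_m$, it suffices to evaluate both composites on such tensors. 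Throughout I would use the closed formula $\gamma(f_1 \otimes_B \cdots \otimes_B f_k)(d_1 \otimes_B \cdots \otimes_B d_k) = f_{k \rightarrow 1}(d_{1 \rightarrow k})$ recalled just before the lemma as the definition of the iterated $\gamma$, which is readily checked to be independent of the order of iteration.

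First I would unwind the right-hand side. By definition of $(-)_{i,m}$, the map $[D(g)\circ\gamma]_{i,m}$ sends $\phi_1 \otimes_B \cdots \otimes_B \phi_{m+n-1}$ to $\psi_1 \otimes_B \cdots \otimes_B \psi_m$, where $\psi_j = \phi_j$ for $j \le i$, where $\psi_{i+1} = (D(g)\circ\gamma)(\phi_{i+1}\otimes_B\cdots\otimes_B\phi_{i+n})$, and where $\psi_j = \phi_{j+n-1}$ for $i+2 \le j \le m$. Applying iterated $\gamma$ and evaluating at $c_1 \otimes_B \cdots \otimes_B c_m$ then gives the nested expression $\psi_m(c_1\,\psi_{m-1}(c_2 \cdots \psi_1(c_m)\cdots))$, in which the only layer that is not a bare $\phi_j$ is the $\psi_{i+1}$-layer, evaluated at $c_{m-i} \cdot X$ with $X := \phi_i(c_{m-i+1}\phi_{i-1}(c_{m-i+2}\cdots\phi_1(c_m)\cdots)) \in B$.

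Next I would unwind the left-hand side. Here $g^{i,m}$ inserts $g(c_{m-i})$ into the $(m-i)$-th tensor slot, so $g^{i,m}(c_1 \otimes_B \cdots \otimes_B c_m)$ is a sum of tensors $c_1 \otimes_B \cdots \otimes_B c_{m-i-1} \otimes_B c^{(1)} \otimes_B \cdots \otimes_B c^{(n)} \otimes_B c_{m-i+1} \otimes_B \cdots \otimes_B c_m$ with $g(c_{m-i}) = \sum c^{(1)} \otimes_B \cdots \otimes_B c^{(n)}$. Feeding this into $\gamma(\phi_1\otimes_B\cdots\otimes_B\phi_{m+n-1})$ via the same closed formula produces a nested expression whose innermost $i$ layers again assemble into $X$, whose next $n$ layers assemble into $Z := \phi_{i+n}(c^{(1)}\phi_{i+n-1}(c^{(2)}\cdots\phi_{i+1}(c^{(n)}\,X)\cdots))$, and whose remaining layers are $\phi_{i+n+1},\dots,\phi_{m+n-1}$ paired with $c_{m-i-1},\dots,c_1$. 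Comparing with the right-hand side, the outermost layers match layer for layer, so the lemma reduces to the single identity $(D(g)\circ\gamma)(\phi_{i+1}\otimes_B\cdots\otimes_B\phi_{i+n})(c_{m-i}\cdot X) = Z$. This follows by rewriting the left side as $\gamma(\phi_{i+1}\otimes_B\cdots\otimes_B\phi_{i+n})(g(c_{m-i})\cdot X)$ — using that $g$ is right $B$-linear, so $g(c_{m-i}\cdot X) = g(c_{m-i})\cdot X$ absorbs the scalar $X$ into the last tensor factor $c^{(n)}$ — followed by one more application of the closed formula for $\gamma$.

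The argument is essentially an exercise in bookkeeping, and the one place where care is genuinely required is tracking a reflection: since $\gamma$ pairs the $j$-th tensor factor of $D(\mathcal{C})^{\otimes_B k}$ with the $(k-j+1)$-th tensor factor of $\mathcal{C}^{\otimes_B k}$, the slot in which $g^{i,m}$ acts on the coalgebra side corresponds to the slot $i+1$ from the left in which $[D(g)\circ\gamma]_{i,m}$ acts on the algebra side, and one must check that these line up and that the $B$-bimodule structures on the inner coefficients are threaded through consistently. There are no signs to contend with, since this identity lives at the level of the underlying operads of $k$-vector spaces, prior to any operadic (de)suspension.
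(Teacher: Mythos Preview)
Your argument is correct and follows essentially the same route as the paper's own proof: both evaluate each composite on an elementary tensor $\phi_1\otimes_B\cdots\otimes_B\phi_{m+n-1}$, apply the closed formula for the iterated $\gamma$, and compare the resulting nested expressions. Your write-up is in fact slightly more explicit than the paper's at the one genuinely non-formal step, namely the use of the right $B$-linearity of $g$ to pass from $g(c_{m-i}\cdot X)$ to $g(c_{m-i})\cdot X$; the paper's proof uses this implicitly when asserting that the two final displayed expressions agree.
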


\begin{proof}
  Let \(f_1 \otimes_B ... \otimes_B f_{m+n-1} \in D(\mathcal{C})^{\otimes_B (m+n-1)}\). By applying \(\gamma\) we get the element
  \begin{equation*}
    c_1 \otimes_B ... \otimes_B c_{m+n-1} \mapsto f_{(m+n-1) \rightarrow 1}(c_{1 \rightarrow (m+n-1)})
  \end{equation*}
  in \(D(\mathcal{C}^{\otimes_B (m+n-1)})\). We precompose with \(g^{i,m}\), which maps \(c_1 \otimes_B ... \otimes_B c_m\) to
  \begin{equation*}
   \underbrace{c_1 \otimes_B ... \otimes_B c_{m-i-1}}_{\# m-i-1} \otimes \underbrace{g(c_{m-i})_{\underline{1}} \otimes_B ... \otimes_B g(c_{m-i})_{\underline{n}}}_{\# n} \otimes \underbrace{c_{m-i+1} \otimes ... \otimes c_m}_{\# i}.
  \end{equation*}
Therefore we get that the composition \(\gamma(f_1 \otimes_B ... \otimes_B f_{m+n-1}) \circ g^{i,m}\) is equal to the function which maps \(c_1 \otimes_B ... \otimes_B c_m\) to 
  \begin{equation*}
    f_{(m+n-1) \rightarrow (n+i+1)}(c_{1 \rightarrow (m-i-1)}f_{(n+i) \rightarrow (i+1)}(g(c_{m-i})_{\underline{1} \rightarrow \underline{n}}f_{i \rightarrow 1}(c_{(m-i+1) \rightarrow m}))).
  \end{equation*}
  Conversely, applying \([D(g) \circ \gamma]_{i,m}\) to \(f_1 \otimes_B ... \otimes_B f_{m+n-1}\) we get
  \begin{align*}
    [D(g) \circ \gamma]_{i,m}(f_1 \otimes_B ... \otimes_B f_{m+n-1}) &= f_1 \otimes_B ... \otimes_B f_i \otimes_B D(g)(\gamma(f_{i+1} \otimes_B ... \otimes_B f_{i+n})) \\
                                                                 &\otimes_B f_{i+n+1} \otimes_B ... \otimes_B f_{m+n-1} \\
                                                                 &= f_1 \otimes_B ... \otimes_B f_i \otimes_B \gamma(f_{i+1} \otimes_B ... \otimes_B f_{i+n}) \circ g \\
                                                                 &\otimes f_{i+n+1} \otimes_B ... \otimes_B f_{m+n-1}. 
  \end{align*}
  Lastly we apply \(\gamma\) to get the function which maps \(c_1 \otimes_B ... \otimes_B c_m\) to 
  \begin{equation*}
    f_{(m+n-1) \rightarrow (n+i+1)}(c_{1 \rightarrow (m-i-1)}f_{(n+i) \rightarrow (i+1)}(g(c_{m-i})_{\underline{1} \rightarrow \underline{n}}f_{i \rightarrow 1}(c_{(m-i+1) \rightarrow m}))).
  \end{equation*}
  Therefore the diagram commutes. 
\end{proof}

\begin{prop}\label{prop:endopRight}
  The maps \(\{R^{\op}(n)\}_{n \geq 0}\) form a morphism of operads with multiplication. 
\end{prop}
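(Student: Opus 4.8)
The plan is to verify the three conditions defining a morphism of operads with multiplication: compatibility with the partial compositions, compatibility with the unit morphisms, and that $R^{\op}(2)$ carries the distinguished multiplication $\Delta \in \CoEndOp(\mathcal{C})(2)$ to the distinguished multiplication of $\EndOp(R^{\op})$. First I would record that $R^{\op}(n)$ is well defined, i.e.\ that $R^{\op}(n)(f) = D_B(f) \circ \gamma$ really lies in $\EndOp(R^{\op})(n) = \Hom_{B\text{-}B}\bigl((R^{\op})^{\otimes_B n}, R^{\op}\bigr)$: the iterated map $\gamma \colon D(\mathcal{C})^{\otimes_B n} \to D(\mathcal{C}^{\otimes_B n})$ of Lemma~\ref{Lemma:leftdecomp} and the map $D_B(f)$ are both $B$-bimodule maps, hence so is their composite, and $D(\mathcal{C})^{\otimes_B n} = (R^{\op})^{\otimes_B n}$ as $B$-bimodules. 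No finiteness hypothesis on $\mathcal{C}$ enters here, since $\gamma$ is defined regardless.

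For the compatibility with partial compositions --- the substance of the proposition --- I would unwind both operad structures. In $\CoEndOp(\mathcal{C})$ one has $f \circ_i g = g^{i,m} \circ f$, while in $\EndOp(R^{\op})$ one has $\phi \circ_i \psi = \phi \circ \psi_{i,m}$, with the notation $g^{i,m}$, $\psi_{i,m}$ introduced before Lemma~\ref{lemma:compAlg}. Using contravariant functoriality of $D_B$,
\begin{equation*}
  R^{\op}(m+n-1)(f \circ_i g) = D_B\bigl(g^{i,m} \circ f\bigr) \circ \gamma = D_B(f) \circ \bigl(D_B(g^{i,m}) \circ \gamma\bigr),
\end{equation*}
and then Lemma~\ref{lemma:compAlg} (whose $D$ coincides with $D_B$ on morphisms of $B$-bimodules) rewrites $D_B(g^{i,m}) \circ \gamma$ as $\gamma \circ \bigl[D_B(g) \circ \gamma\bigr]_{i,m} = \gamma \circ \bigl[R^{\op}(n)(g)\bigr]_{i,m}$. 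Substituting back,
\begin{equation*}
  R^{\op}(m+n-1)(f \circ_i g) = \bigl(D_B(f) \circ \gamma\bigr) \circ \bigl[R^{\op}(n)(g)\bigr]_{i,m} = R^{\op}(m)(f) \circ_i R^{\op}(n)(g),
\end{equation*}
which is the desired identity.

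For the unit, since $\gamma$ is the identity in arity $1$ and $D_B$ is a functor, $R^{\op}(1)(\ide_{\mathcal{C}}) = D_B(\ide_{\mathcal{C}}) \circ \gamma = \ide_{D(\mathcal{C})} = \ide_{R^{\op}}$, and the arity-$0$ compatibility is handled the same way by unwinding $\gamma \colon B \xrightarrow{\ \sim\ } D(B)$. For the multiplication I would evaluate directly: for $f, g \in R^{\op} = \Hom_{B\text{-}}(\mathcal{C},B)$ and $c \in \mathcal{C}$,
\begin{equation*}
  \bigl(R^{\op}(2)(\Delta)\bigr)(f \otimes_B g)(c) = \gamma(f \otimes_B g)\bigl(\Delta(c)\bigr) = g\bigl(c_{\underline{1}}\, f(c_{\underline{2}})\bigr),
\end{equation*}
which is exactly the convolution product $f \cdot g$ on $R^{\op}$ (recall $R = \Hom_{B\text{-}}(\mathcal{C},B)^{\op}$, so the product of $R^{\op}$ is $g \circ (1 \otimes_B f) \circ \Delta$). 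Hence $R^{\op}(2)(\Delta)$ is the multiplication element of $\EndOp(R^{\op})$.

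I do not anticipate a real obstacle: the one genuinely delicate point --- the interplay of the duality map $\gamma$ with the arity-raising insertion operators $(-)^{i,m}$ and $(-)_{i,m}$ --- has already been isolated and established in Lemma~\ref{lemma:compAlg}, so what remains is functoriality of $D_B$ together with unwinding definitions. The only thing to stay alert to is that the partial compositions of $\CoEndOp$ and $\EndOp$ are mirror images of one another (insertion on the left versus on the right), which is precisely what makes $f \mapsto D_B(f) \circ \gamma$ covariant at the operadic level; since all of this takes place before any suspension, no Koszul signs intervene.
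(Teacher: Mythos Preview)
Your proof is correct and follows essentially the same route as the paper: the key step in both is the use of Lemma~\ref{lemma:compAlg} together with contravariant functoriality of $D_B$ to verify compatibility with partial compositions, after which the multiplication condition is immediate. You are slightly more thorough than the paper in also explicitly checking well-definedness, the unit, and unwinding the multiplication, but the substance of the argument is the same.
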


\begin{proof}
  Let us denote the partial compositions in the coendomorphism operad by \(\circ_i'\). First, let us show that \(R^{\op}(f \circ'_i g) = R^{\op}(f) \circ_i R^{\op}(g)\). By definition
  \begin{align*}
    R^{\op}(f) \circ_i R^{\op}(g) &= (D(f) \circ \gamma) \circ_i (D(g) \circ \gamma) \\
                           &= [D(f) \circ \gamma] \circ [D(g) \circ \gamma]_{i,m}. 
  \end{align*}
  By Lemma~\ref{lemma:compAlg}, we have that 
  \begin{align*}
    R^{\op}(f \circ_i' g) &= D(f) \circ (D(g^{i,m}) \circ \gamma) \\
                          &= D(f) \circ (\gamma \circ [D(g) \circ \gamma]_{i,m})  \\
                          &= [D(f) \circ \gamma] \circ [D(g) \circ \gamma]_{i,m}.
  \end{align*}
  Therefore \(R^{\op}\) commutes with the partial compositions. Note that the multiplication on \(R^{\op}\) is equal to \(R^{\op}(\pi)\), therefore we have that \({R^{\op}}\) commutes with the multiplication.
\end{proof}

\begin{thm}\label{thm:rightalgebra}
  Let \(\mathcal{C}\) be a coring and let \(R^{\op}\) be the opposite of the right algebra for \(\mathcal{C}\). Then there is a strict \(B_{\infty}\)-morphism
  \begin{equation*}
    C_{\Ca}^*(\mathcal{C}) \rightarrow C^*(R^{\op}| B^{\op})
  \end{equation*}
  from the \(B_{\infty}\)-algebra given by the Cartier cochain complex to the \(B_{\infty}\)-algebra given by the relative Hochschild cochain complex. 
\end{thm}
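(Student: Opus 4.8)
The plan is to obtain the morphism as a formal consequence of the operadic presentations of the two cochain complexes together with Proposition~\ref{prop:endopRight}. Recall first that by Proposition~\ref{prop:CartierBinfinity} the Cartier cochain complex \(C^*_{\Ca}(\mathcal{C})\) is, as a \(B_\infty\)-algebra, exactly the brace \(B_\infty\)-algebra produced by Theorem~\ref{thm:OperadB} from the operad with multiplication \((\CoEndOp(\mathcal{C}),\Delta)\), where \(\mathcal{C}\) is viewed as a comonoid in \(B\)-bimodules. Dually, by the description of relative Hochschild cochains as an endomorphism operad, the complex \(C^*(R^{\op}| B^{\op})\) is the brace \(B_\infty\)-algebra produced from the operad with multiplication \((\EndOp(R^{\op}),\mu_{R^{\op}})\), where \(R^{\op}\) is viewed as a monoid in \(B^{\op}\)-bimodules (this makes sense since \(B\subseteq R\) yields \(B^{\op}\subseteq R^{\op}\)) and \(\mu_{R^{\op}}\) is the algebra multiplication.

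First I would invoke Proposition~\ref{prop:endopRight}: the family \(\{R^{\op}(n)\}_{n\ge 0}\), \(f\mapsto D_B(f)\circ\gamma\), is a morphism of operads with multiplication from \(\CoEndOp(\mathcal{C})\) to \(\EndOp(R^{\op})\); in particular it carries the multiplication \(\Delta\in\CoEndOp(\mathcal{C})(2)\) to \(\mu_{R^{\op}}\in\EndOp(R^{\op})(2)\). Then I would feed this morphism into the functor \(\mathbf{mOp}_k\to\mathbf{B}_{\infty,\text{strict}}\) coming from Lemma~\ref{lemma:strictMorphism} and Theorem~\ref{thm:OperadB}: an operad morphism \(\varphi\) is sent to \(\bigoplus_n\varphi(n)\), which is a dg-algebra map (it respects \(\mu\), hence the differential \([\mu,-]\) and the cup product) and intertwines all brace operations (it respects the total compositions \(\gamma\)), hence is a strict \(B_\infty\)-morphism by Lemma~\ref{lemma:strictMorphism}. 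Applied to \(R^{\op}\), this yields the required strict \(B_\infty\)-morphism \(C^*_{\Ca}(\mathcal{C})\to C^*(R^{\op}| B^{\op})\).

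Since Proposition~\ref{prop:endopRight} has already been established, nothing substantial remains to be computed; what I would still make explicit are the two bookkeeping points it rests on. First, one should check that \(D_B(f)\circ\gamma\) is genuinely a \(B^{\op}\)-bimodule map \((R^{\op})^{\otimes_{B^{\op}} n}\to R^{\op}\), so that \(R^{\op}(n)\) lands in the correct component — here the contravariance of \(D_B=\Hom_{B\text{-}}(-,B)\) combined with the order reversal built into \(\EndOp(R^{\op})\) is exactly what makes the assignment covariant on operads and forces the target to be \(R^{\op}\) rather than \(R\). Second, the compatibility of the iterated pairing \(\gamma\) with partial compositions is precisely Lemma~\ref{lemma:compAlg}, and the identity \(R^{\op}(2)(\Delta)=\mu_{R^{\op}}\) should be verified directly by unwinding both sides with the explicit formula for \(\gamma\) from Lemma~\ref{Lemma:leftdecomp} and the definition of the multiplication on \(\Hom_{B\text{-}}(\mathcal{C},B)^{\op}\). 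The only genuine obstacle is therefore the verification of Proposition~\ref{prop:endopRight}; the passage from there to Theorem~\ref{thm:rightalgebra} is pure functoriality.
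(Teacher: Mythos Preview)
Your proposal is correct and follows exactly the paper's approach: invoke Proposition~\ref{prop:endopRight} to get a morphism of operads with multiplication, then apply the functor \(\mathbf{mOp}_k \to \mathbf{B}_{\infty,\text{strict}}\) furnished by Theorem~\ref{thm:OperadB} (and Lemma~\ref{lemma:strictMorphism}) to obtain the strict \(B_\infty\)-morphism. The paper's proof is in fact just the one-line version of what you wrote; your additional bookkeeping remarks are reasonable elaborations but not strictly needed once Proposition~\ref{prop:endopRight} is in hand.
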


\begin{proof}
  This follows from Theorem~\ref{thm:OperadB}, as the map \(R^{\op}\) of the operads induces a strict morphism of \(B_{\infty}\)-algebras. 
\end{proof}

\begin{cor}\label{cor:B-infty}
  If \(\mathcal{C}\) is finitely generated projective as a left \(B\)-module, we have a strict isomorphism of \(B_{\infty}\)-algebras
  \begin{equation*}
    C_{\Ca}^*(\mathcal{C}) \cong C^*(R^{\op}| B^{\op}).
  \end{equation*}
\end{cor}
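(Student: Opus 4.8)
The plan is to deduce Corollary~\ref{cor:B-infty} from Theorem~\ref{thm:rightalgebra} by showing that the strict $B_{\infty}$-morphism produced there is, under the finiteness hypothesis, bijective componentwise. Recall that this morphism is the image of the morphism of operads with multiplication $R^{\op} = \{R^{\op}(n)\}_{n \geq 0}$ of Proposition~\ref{prop:endopRight} under the functor $\mathbf{mOp}_k \rightarrow \mathbf{B}_{\infty,\text{strict}}$ of Theorem~\ref{thm:OperadB}. A strict $B_{\infty}$-morphism is an isomorphism of $B_{\infty}$-algebras as soon as its underlying map of graded vector spaces is bijective, and the levelwise inverse of a morphism of operads with multiplication is again a morphism of operads with multiplication (the inverse of an operad isomorphism is an operad isomorphism, and it sends $R^{\op}(2)(\Delta)$ back to $\Delta$, hence preserves the multiplication). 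So it suffices to show that each component $R^{\op}(n) : \CoEndOp(\mathcal{C})(n) \rightarrow \EndOp(R^{\op})(n)$, $f \mapsto D_B(f) \circ \gamma$, is a bijection.

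First I would check that the finiteness hypothesis keeps everything inside the finitely generated projective world: iterating Lemma~\ref{lemma:fdb}(2) shows that $\mathcal{C}^{\otimes_B n}$ is finitely generated projective as a left $B$-module for all $n$, so it is an object of $B\Bimod_{\mathbf{lfgp}}$. Applying Lemma~\ref{Lemma:leftdecomp} repeatedly, the iterated comparison map $\gamma : R^{\otimes_B n} = \Hom_{B\text{-}}(\mathcal{C},B)^{\otimes_B n} \rightarrow \Hom_{B\text{-}}(\mathcal{C}^{\otimes_B n},B) = D_B(\mathcal{C}^{\otimes_B n})$ is then an isomorphism of $B$-bimodules. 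Consequently, precomposition with $\gamma$ is a bijection $\Hom_{B\text{-}B}(D_B(\mathcal{C}^{\otimes_B n}), R) \rightarrow \Hom_{B\text{-}B}(R^{\otimes_B n}, R) = \EndOp(R^{\op})(n)$.

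Next I would invoke Lemma~\ref{lemma:dualityBbimod}: since $D_B = \Hom_{B\text{-}}(-,B)$ is a duality between $B\Bimod_{\mathbf{lfgp}}$ and $B\Bimod_{\mathbf{rfgp}}$, it induces a bijection on morphism sets $\Hom_{B\text{-}B}(\mathcal{C}, \mathcal{C}^{\otimes_B n}) \rightarrow \Hom_{B\text{-}B}(D_B(\mathcal{C}^{\otimes_B n}), D_B(\mathcal{C})) = \Hom_{B\text{-}B}(D_B(\mathcal{C}^{\otimes_B n}), R)$, sending $f$ to $D_B(f)$. Composing this with the bijection from the previous paragraph gives precisely the map $f \mapsto D_B(f) \circ \gamma = R^{\op}(n)(f)$, so $R^{\op}(n)$ is bijective for every $n$; by the reduction in the first paragraph this establishes the strict $B_{\infty}$-isomorphism $C^*_{\Ca}(\mathcal{C}) \cong C^*(R^{\op}|B^{\op})$.

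I do not expect a genuine obstacle here: the statement is essentially a repackaging of the relative-exact duality $D$ of Section~\ref{sec:cart-cohom-relat}, now at the level of plain $B$-bimodules, together with the fact that $\gamma$ becomes an isomorphism in the finite case. The only points that need care are bookkeeping ones: verifying that the identification $\EndOp(R^{\op})(n) \cong \Hom_{B\text{-}B}(R^{\otimes_B n}, R)$ implicitly used in the formula $f \mapsto D_B(f)\circ\gamma$ matches the $B^{\op}$-bimodule structure on $R^{\op}$ correctly, and confirming that all tensor powers of $\mathcal{C}$ genuinely remain left finitely generated projective so that Lemmas~\ref{Lemma:leftdecomp}, \ref{lemma:fdb} and~\ref{lemma:dualityBbimod} apply without reservation.
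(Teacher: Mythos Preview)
Your proposal is correct and is essentially a fleshed-out version of the paper's own proof, which simply observes that under the left-fgp hypothesis $D_B$ is a duality and $\gamma$ is an isomorphism, so the underlying map $f \mapsto D_B(f)\circ\gamma$ is bijective and hence a strict $B_\infty$-isomorphism. Your explicit invocation of Lemma~\ref{lemma:fdb}(2) to keep $\mathcal{C}^{\otimes_B n}$ in $B\Bimod_{\mathbf{lfgp}}$ and your closing bookkeeping remark about the $B^{\op}$-bimodule identification are exactly the details the paper leaves implicit.
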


\begin{proof}
  Note that for the category of left finitely generated \(B\)-bimodules, \(D\) will be a duality and \(\gamma\) an isomorphism. Therefore, we have that our underlying map is an isomorphism, and thus a \(B_{\infty}\)-isomorphism.
\end{proof}

\begin{cor}
  If \(\mathcal{C}\) is finitely generated projective as a left \(B\)-module, we have an isomorphism of Gerstenhaber algebras
  \begin{equation*}
    \HH_{\Ca}^*(\mathcal{C}) \cong \HH^*(R^{\op}|B^{\op}).
  \end{equation*}
\end{cor}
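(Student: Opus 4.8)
The plan is to simply pass to cohomology in the $B_{\infty}$-comparison already established. Recall first what the two sides mean at the chain level: by Proposition~\ref{prop:CartierBinfinity} the Cartier cochain complex $C_{\Ca}^*(\mathcal{C})$ is the brace $B_{\infty}$-algebra attached, via Theorem~\ref{thm:OperadB}, to the operad with multiplication $\CoEndOp(\mathcal{C})$; its $m_1$-differential is $[\Delta,-]$, i.e.\ the cobar differential, so $H^*(C_{\Ca}^*(\mathcal{C}),m_1)=\Hh_{\Ca}^*(\mathcal{C})$ equipped with the Gerstenhaber structure produced by Proposition~\ref{prop:BgivesG}. Dually, $C^*(R^{\op}|B^{\op})$ is the brace $B_{\infty}$-algebra attached to $\EndOp(R^{\op})$, with $m_1$ the relative bar differential, so $H^*(C^*(R^{\op}|B^{\op}),m_1)=\HH^*(R^{\op}|B^{\op})$ with its Gerstenhaber structure.

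Next I would invoke Corollary~\ref{cor:B-infty}: under the hypothesis that $\mathcal{C}$ is finitely generated projective as a left $B$-module, the maps $R^{\op}(n)$ of Proposition~\ref{prop:endopRight} are componentwise isomorphisms (because $D_B$ is a duality by Lemma~\ref{lemma:dualityBbimod} and $\gamma$ is an isomorphism by Lemma~\ref{Lemma:leftdecomp}), so they assemble into an isomorphism of operads with multiplication and hence, through the functor $\mathbf{mOp}_k\to\mathbf{B}_{\infty,\text{strict}}$, into a strict isomorphism of $B_{\infty}$-algebras $C_{\Ca}^*(\mathcal{C})\cong C^*(R^{\op}|B^{\op})$. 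A strict $B_{\infty}$-isomorphism has underlying $A_{\infty}$-morphism concentrated in $f_1$, with $f_1$ invertible, hence in particular a quasi-isomorphism; so this is a $B_{\infty}$-quasi-isomorphism. Applying the last sentence of Proposition~\ref{prop:BgivesG}, a $B_{\infty}$-quasi-isomorphism induces an isomorphism of Gerstenhaber algebras on cohomology, which is exactly the asserted isomorphism $\Hh_{\Ca}^*(\mathcal{C})\cong\HH^*(R^{\op}|B^{\op})$.

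There is no genuine obstacle: the corollary is a formal consequence of Corollary~\ref{cor:B-infty} and Proposition~\ref{prop:BgivesG}. The only point worth a sentence of care is that the Gerstenhaber structures named in the statement are indeed the ones transported by this argument, i.e.\ that the brace $B_{\infty}$-structures of Proposition~\ref{prop:CartierBinfinity} and of the relative Hochschild proposition are the structures whose induced cup product and bracket on cohomology are, respectively, the Gerstenhaber structures of $\Hh_{\Ca}^*(\mathcal{C})$ and $\HH^*(R^{\op}|B^{\op})$; but this is precisely how those Gerstenhaber algebras were defined in Section~\ref{sec:cartier-cohomology} and Section~\ref{sec:relat-hochsch-cohom}, so the identification is immediate and the proof reduces to citing Corollary~\ref{cor:B-infty} and Proposition~\ref{prop:BgivesG}.
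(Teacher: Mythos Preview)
Your proposal is correct and follows the same approach as the paper: the paper's own proof is a one-line invocation of the functoriality of passing from $B_{\infty}$-algebras to Gerstenhaber algebras on cohomology, applied to the strict $B_{\infty}$-isomorphism of Corollary~\ref{cor:B-infty}. Your write-up simply unpacks this functoriality via Proposition~\ref{prop:BgivesG} and checks that the Gerstenhaber structures are the intended ones, which is a faithful (if more detailed) rendering of the paper's argument.
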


\begin{proof}
  As taking homology is a functor from the category of \(B_{\infty}\)-algebras to the category of Gerstenhaber algebras, we get an induced map on the level on homology. 
\end{proof}

Similarly to Proposition 6.4 in \cite{ChenLiWang2021} for absolute Hochschild cochains, the following holds for relative Hochschild cochains. As the proof is more or less identical to the proof in the absolute case, we refer the reader to the proof of Proposition 6.4 in \cite{ChenLiWang2021}

\begin{prop}
  There is a \(B_{\infty}\)-isomorphism
  \begin{equation*}
    C^*(R^{\op}|B^{\op}) \rightarrow C^*(R|B)^{\opp}.
  \end{equation*}
\end{prop}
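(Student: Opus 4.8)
The plan is to exhibit the stated map as a composite of two isomorphisms: a strict reversal isomorphism onto the transpose $B_\infty$-algebra, followed by the natural transpose--opposite isomorphism of Theorem~\ref{thm:optr}. First I would set up the underlying identification of graded vector spaces. For a $B$-bimodule $M$ let $M^{\op}$ denote the $B^{\op}$-bimodule obtained by interchanging the left and right $B$-actions. A direct check gives a natural isomorphism of $B^{\op}$-bimodules $M^{\op}\otimes_{B^{\op}}N^{\op}\cong(N\otimes_{B}M)^{\op}$, $m\otimes n\mapsto n\otimes m$, and this isomorphism is coherently associative; iterating it identifies $(R^{\op})^{\otimes_{B^{\op}}n}$ with $(R^{\otimes_{B}n})^{\op}$ via the reversal $r_{1}\otimes\cdots\otimes r_{n}\mapsto r_{n}\otimes\cdots\otimes r_{1}$. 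Since $\Hom_{B^{\op}\text{-}B^{\op}}(X^{\op},Y^{\op})=\Hom_{B\text{-}B}(X,Y)$ and $R^{\op}$, $R$ have the same underlying set, precomposition with this reversal (inserting the Koszul sign dictated by the operadic degree of an $n$-cochain, as in Definition~\ref{def:trBinfinity}) yields a degreewise linear isomorphism $\phi\colon C^{*}(R^{\op}|B^{\op})\xrightarrow{\ \sim\ }C^{*}(R|B)$.

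Next I would verify that $\phi$ is a strict isomorphism of $B_\infty$-algebras from $C^{*}(R^{\op}|B^{\op})$ onto the transpose $C^{*}(R|B)^{\tr}$. Recall that $C^{*}(R|B)$ carries the brace $B_\infty$-structure attached to the endomorphism operad $\EndOp(R)$ of the monoid $(R,\mu,\iota)$ in $B$-bimodules, while $C^{*}(R^{\op}|B^{\op})$ carries the one attached to $\EndOp(R^{\op})$ of the monoid $(R^{\op},\mu^{\op},\iota)$ in $B^{\op}$-bimodules, where $\mu^{\op}(x,y)=\mu(y,x)$. Because $\mu^{\op}$ reverses its two inputs and the ambient tensor product over $B^{\op}$ reverses tensor factors, unwinding the cup product and the brace operations of $\EndOp(R^{\op})$ and transporting them along $\phi$ produces exactly the operations $m_{n}^{\tr}$ and $\mu_{pq}^{\tr}$ of Definition~\ref{def:trBinfinity}: grafting $g_{1},\dots,g_{p}$ into the inputs of $f$ and then globally reversing all inputs is the same as grafting into $\phi(f)$ with the brace-arguments in reverse order, and the signs $\varepsilon_{n},\varepsilon$ there are precisely the Koszul signs accumulated when commuting the global reversal past an operadic composition. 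Since both sides are brace $B_\infty$-algebras and $\phi$ is a dg algebra map intertwining the braces, Lemma~\ref{lemma:strictMorphism} shows $\phi$ is a strict $B_\infty$-morphism, hence an isomorphism. The subalgebra $B$ plays no role here, as everything lives in the category of $B$-bimodules and the reversal is $B$-bilinear; this step is the relative counterpart of the computation in the proof of Proposition 6.4 of \cite{ChenLiWang2021}.

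Finally, by Theorem~\ref{thm:optr} there is a natural $B_\infty$-isomorphism $C^{*}(R|B)^{\tr}\cong C^{*}(R|B)^{\opp}$; composing it with $\phi$ gives the desired $B_\infty$-isomorphism $C^{*}(R^{\op}|B^{\op})\cong C^{*}(R|B)^{\opp}$, and passing to cohomology via Proposition~\ref{prop:BgivesG} recovers the isomorphism of Gerstenhaber algebras $\HH^{*}(R^{\op}|B^{\op})\cong\HH^{*}(R|B)^{\op}$. I expect the main obstacle to be the sign bookkeeping in the second step---checking that the conventions defining the transpose $B_\infty$-algebra agree term by term with the Koszul signs produced by reversing every tensor factor in $\EndOp(R^{\op})$---together with the preliminary verification that $M^{\op}\otimes_{B^{\op}}N^{\op}\cong(N\otimes_{B}M)^{\op}$ is associative, so that the $n$-fold reversal underlying $\phi$ is unambiguous. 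Everything else is formal.
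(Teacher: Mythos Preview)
Your proposal is correct and follows exactly the route the paper intends: the paper does not spell out a proof but simply cites Proposition~6.4 of \cite{ChenLiWang2021} and says the relative case is identical, and your outline---the reversal map giving a strict $B_\infty$-isomorphism onto $C^*(R|B)^{\tr}$, then invoking Theorem~\ref{thm:optr} to pass to $C^*(R|B)^{\opp}$---is precisely the relative version of that argument. Your identification of the sign bookkeeping as the only nontrivial verification is also accurate.
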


Therefore, by combining with Corollary~\ref{cor:B-infty} and Theorem~\ref{thm:optr}, we get that

\begin{thm}\label{thm:coringtoalg}
  Let \(\mathcal{C}\) be a \(B\)-coring such that \(\mathcal{C}\) is finitely generated projective as a left \(B\)-module. We have an isomorphism of \(B_{\infty}\)-algebras
  \begin{equation*}
    C^*_{\Ca}(\mathcal{C}) \cong C^*(R|B)^{\opp}
  \end{equation*}
  which induces an isomorphism of Gerstenhaber algebras
  \begin{equation*}
    \Hh_{\Ca}^*(\mathcal{C}) \cong \HH^*(A|B)^{\opp}.
  \end{equation*}
\end{thm}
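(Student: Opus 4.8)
The plan is to obtain the $B_{\infty}$-isomorphism by composing the structural results established immediately above, and then to push it down to cohomology. Since $\mathcal{C}$ is finitely generated projective as a left $B$-module, Corollary~\ref{cor:B-infty} supplies a strict $B_{\infty}$-isomorphism
\begin{equation*}
  C^*_{\Ca}(\mathcal{C}) \xrightarrow{\sim} C^*(R^{\op}|B^{\op}),
\end{equation*}
coming from the morphism of operads with multiplication $R^{\op}$ of Proposition~\ref{prop:endopRight} together with the fact that in the lfgp setting the duality $D$ is an equivalence and the comparison maps $\gamma$ of Lemma~\ref{Lemma:leftdecomp} are isomorphisms. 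Composing this with the $B_{\infty}$-isomorphism $C^*(R^{\op}|B^{\op}) \xrightarrow{\sim} C^*(R|B)^{\opp}$ of the preceding proposition --- whose construction mirrors that of Proposition~6.4 in \cite{ChenLiWang2021}, and which, if routed through the transpose $B_{\infty}$-algebra, calls on Theorem~\ref{thm:optr} to identify $C^*(R|B)^{\tr}$ with $C^*(R|B)^{\opp}$ --- yields the first assertion, a $B_{\infty}$-isomorphism $C^*_{\Ca}(\mathcal{C}) \cong C^*(R|B)^{\opp}$.

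For the second assertion I would apply the homology functor of Proposition~\ref{prop:BgivesG}. A $B_{\infty}$-isomorphism is in particular a $B_{\infty}$-quasi-isomorphism, hence induces an isomorphism of Gerstenhaber algebras on cohomology, so it remains only to identify $H^*\bigl(C^*(R|B)^{\opp}\bigr)$ with $\HH^*(A|B)^{\opp}$, where $A$ denotes the right algebra $R$ of $\mathcal{C}$. Unwinding Definition~\ref{def:opBinfinity}, the maps $m_1$ and $m_2$ of $C^*(R|B)^{\opp}$ agree with those of $C^*(R|B)$, so the underlying complex and the cup product on cohomology are unchanged, while $\mu_{1,1}$ is replaced by its opposite; by the discussion following Definition~\ref{def:opBinfinity} the homology functor therefore carries the opposite $B_{\infty}$-algebra to the opposite Gerstenhaber algebra. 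Thus $H^*\bigl(C^*(R|B)^{\opp}\bigr) = \HH^*(R|B)^{\opp} = \HH^*(A|B)^{\opp}$, and the $B_{\infty}$-isomorphism descends to the claimed Gerstenhaber isomorphism $\Hh_{\Ca}^*(\mathcal{C}) \cong \HH^*(A|B)^{\opp}$.

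Every ingredient here is either a strict or an explicitly constructed $B_{\infty}$-isomorphism rather than merely a quasi-isomorphism, so the composition is immediate and I anticipate no genuine obstacle. The only care points are bookkeeping: making sure the sign conventions of Definition~\ref{def:opBinfinity} entering the preceding proposition are the same ones that appear when applying Proposition~\ref{prop:BgivesG}, so that the arrows actually compose to the stated map, and keeping the identification of the right algebra $R$ with $A$ consistent throughout.
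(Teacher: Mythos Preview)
Your proposal is correct and follows essentially the same route as the paper: combine Corollary~\ref{cor:B-infty} with the preceding proposition and Theorem~\ref{thm:optr} to get the $B_{\infty}$-isomorphism, then descend to Gerstenhaber algebras via Proposition~\ref{prop:BgivesG}. Your additional unpacking of why passing to $(-)^{\opp}$ on the $B_{\infty}$-level yields the opposite Gerstenhaber algebra on cohomology is a welcome elaboration of a step the paper leaves implicit.
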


It was shown in \cite{BKK2020} that there is a one-to-one correspondence of quasi-hereditary algebras with homological exact Borel subalgebras with directed corings. For a background on quasi-hereditary algebras and their connection to corings, see \cite{K2017}. Combining this with Theorem~\ref{thm:coringtoalg}, we get the following

\begin{cor}
  Let \(k\) be an algebraically closed field. Then the one-to-one correspondence
  \begin{equation*}
    \begin{tikzcd}
      \{\text{quasi-hereditary algebras } A \text{ with homological exact Borel subalgebras } B\} \ar[leftrightarrow]{d}{1-1} \\
      \{\text{directed } B\text{-corings } \mathcal{C}\}
    \end{tikzcd}
  \end{equation*}
  is compatible with the opposite of the relative Hochschild cohomology and Cartier cohomology as Gerstenhaber algebras. 
\end{cor}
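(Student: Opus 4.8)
The plan is to recognise the bijection of \cite{BKK2020} as the coring/right-algebra passage studied in Sections~\ref{sec:cart-cohom-relat} and~\ref{sec:gerst-struct-relat}, and then to invoke Theorem~\ref{thm:coringtoalg} directly. First I would recall how the correspondence is set up. Given a quasi-hereditary algebra $A$ with a homological exact Borel subalgebra $B$, the defining property of an exact Borel subalgebra is that the induction functor $\Ind = A \otimes_B -$ is exact, which for finite-dimensional algebras over $k$ is equivalent to $A$ being finitely generated projective as a right $B$-module. Hence the right dual coring $\mathcal{C}$ associated to $B \subseteq A$ in Section~\ref{sec:corings} is defined, and by Theorem~3.7 in \cite{Sweedler1975} its right algebra $R$ is isomorphic, as a $k$-algebra containing $B$ as a subalgebra, to $A$ itself. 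The content of \cite{BKK2020} is that the coring $\mathcal{C}$ obtained in this way is directed and that $A \mapsto \mathcal{C}$, with inverse sending a directed $B$-coring to its right algebra, is exactly the stated one-to-one correspondence.

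Next I would record that the finiteness hypothesis of Theorem~\ref{thm:coringtoalg} is automatic on both sides of the bijection. Dualising a splitting of $A$ as a right $B$-module shows that $\mathcal{C}$ is finitely generated projective as a left $B$-module; conversely, as noted in Section~\ref{sec:cart-cohom-relat}, a coring is left finitely generated projective precisely when it is the right dual coring of its right algebra, so a directed $B$-coring satisfies the hypothesis. With this in place, Theorem~\ref{thm:coringtoalg} applied to $\mathcal{C}$, together with the identification $R \cong A$, gives an isomorphism of $B_\infty$-algebras $C^*_{\Ca}(\mathcal{C}) \cong C^*(A|B)^{\opp}$, and hence, via Proposition~\ref{prop:BgivesG}, an isomorphism of Gerstenhaber algebras $\Hh_{\Ca}^*(\mathcal{C}) \cong \HH^*(A|B)^{\opp}$. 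This is exactly the compatibility of the correspondence with the opposite of the relative Hochschild cohomology and Cartier cohomology asserted in the corollary.

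The hard part is not the cohomological statement, which is immediate from Theorem~\ref{thm:coringtoalg}, but the identification in the first step: one has to check that the coring which \cite{BKK2020} attaches to a pair $(A,B)$ really is isomorphic, as a $B$-coring, to the right dual coring of $B \subseteq A$, and dually that every directed $B$-coring arises as the right dual coring of its right algebra, with $B$ a homological exact Borel subalgebra of that algebra. This amounts to matching the explicit constructions of \cite{BKK2020} with those of Section~\ref{sec:corings}; once the dictionary is established, the corollary follows at once.
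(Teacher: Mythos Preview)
Your proposal is correct and follows exactly the paper's approach: the paper gives no separate proof of this corollary, merely stating that it is obtained by ``combining'' the correspondence of \cite{BKK2020} with Theorem~\ref{thm:coringtoalg}. Your argument is the spelled-out version of that sentence, and your verification that the left finite projectivity hypothesis of Theorem~\ref{thm:coringtoalg} is automatic for the corings arising from the correspondence is more than the paper itself supplies.
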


\section{Entwining structures}

We recall the definitions within the theory of entwining structures following Chapter 5 in \cite{CoringsBW} and relate the equivariant cohomology with the relative Hochschild cohomology of the twisted convolution algebra.

\hfill

Let \(k\) be a field and let \(A\) be a \(k\)-algebra and let \(C\) be a \(k\)-coalgebra. We denote the \(k\)-linear dual \(k\)-algebra of \(C\) by \(C^{\vee}\). 

\begin{mydef}
  An \textit{entwining structure} consist of a triple \((A,C,\psi)\), where \(A\) is a \(k\)-algebra, \(C\) a \(k\)-coalgebra and \(\psi : C \otimes A \rightarrow A \otimes C\) satisfying the following four conditions:
  \begin{enumerate}
  \item \(\psi \circ (\ide_C \otimes \mu) = (\mu \otimes \ide_C) \circ (\ide_C \otimes \psi) \circ (\psi \otimes \ide_A)\);
  \item \((\ide_A \otimes \Delta) \circ \psi = (\psi \otimes \ide_C) \circ (\ide_C \otimes \psi) \circ (\Delta \otimes \ide_A)\);
  \item \(\psi \circ (\ide_C \otimes \iota) = \iota \otimes \ide_C\);
  \item \((\ide_A \otimes \varepsilon) \circ \psi = \varepsilon \otimes \ide_A\). 
  \end{enumerate}
  The map \(\psi\) is called an \textit{entwining map}, and we say that \(A\) and \(C\) are \textit{entwined} by \(\psi\). We use the \(\alpha\)\textit{-notation} for the map \(\psi\), where we write \(\psi(c \otimes a) = a_{\alpha} \otimes c^{\alpha}\), suppressing the summation sign for summing over \(\alpha\). 
\end{mydef}

Given an entwining structure, we can naturally associate a coring to it as follows.

\begin{mydef}
  Let \((A,C,\psi)\) be an entwining structure. The \textit{associated coring} is the \(A\)-bimodule \(\mathcal{C} = A \otimes C\) with left action given by \(a'(a \otimes c) = a'a \otimes c\) and right action \((a \otimes c)a' = a\psi(c \otimes a')\) with comultiplication \(\Delta_{\mathcal{C}} = \ide_A \otimes \Delta_C\) and counit \(\varepsilon_{\mathcal{C}} = \ide_A \otimes \varepsilon_C\). An \textit{entwining module} of \((A,C,\psi)\) is a right \(\mathcal{C}\)-comodule.
\end{mydef}

\begin{ex}
  Let \(H\) be a Hopf-algebra. We have an entwining structure \((H,H,\psi)\) given by the map \(\psi : H \otimes H \rightarrow H \otimes H\), defined by \(\psi(h' \otimes h) = h_{\underline{2}} \otimes S(h_{\underline{1}})h'h_{\underline{3}}\). The category of entwining modules is equivalent to the category of Yetter-Drinfeld modules, see 33.6 in \cite{CoringsBW}. 
\end{ex}

\begin{lemma}\label{lemma:rightalgebraent}
  Let \((A,C,\psi)\) be an entwining structure. The right algebra of the associated coring \(\mathcal{C} = A \otimes C\) is isomorphic to the \(\psi\)-\textit{twisted convolution algebra} \(\Hom_{\psi}(C,A)\), where the product is given by \(f * g = \mu_A \circ 1 \otimes f \circ \psi \circ 1 \otimes g \circ \Delta_C\). 
\end{lemma}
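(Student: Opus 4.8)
The plan is to produce a $k$-linear bijection on underlying spaces and then check it transports the multiplication of the right algebra to the $\psi$-twisted convolution product. Recall from Section~\ref{sec:corings} that, writing $B=A$ for the base ring of the coring $\mathcal C=A\otimes C$, the right algebra is $R=\Hom_{A\text{-}}(\mathcal C,A)^{\op}$, where the left dual algebra $\Hom_{A\text{-}}(\mathcal C,A)$ carries the convolution product $f\cdot g=g\circ(1_{\mathcal C}\otimes_A f)\circ\Delta_{\mathcal C}$ with unit $\varepsilon_{\mathcal C}$; so the product of $f$ and $g$ in $R$ is $g\cdot f$.

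First I would set up the vector-space isomorphism. Since the left $A$-module structure on $\mathcal C=A\otimes C$ is carried by the first tensor factor, $\mathcal C\cong A\otimes_k C$ is free as a left $A$-module, so restriction along the inclusion $c\mapsto 1_A\otimes c$ gives a natural $k$-linear isomorphism
\[
  \Theta\colon \Hom_{A\text{-}}(A\otimes C,A)\xrightarrow{\ \sim\ }\Hom_k(C,A),\qquad
  \phi\longmapsto\bigl(c\mapsto\phi(1_A\otimes c)\bigr),
\]
with inverse $f\mapsto\bigl(a\otimes c\mapsto a\,f(c)\bigr)$, and $\Hom_k(C,A)$ is the underlying space of $\Hom_\psi(C,A)$.

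Next I would transport the multiplication through $\Theta$. The only nonformal ingredients are the comultiplication $\Delta_{\mathcal C}(a\otimes c)=(a\otimes c_{\underline 1})\otimes_A(1_A\otimes c_{\underline 2})$ and the right $A$-action $(a\otimes c)a'=a\,\psi(c\otimes a')$ on $\mathcal C$ — the latter is exactly where $\psi$ enters. Writing $\psi(c\otimes a)=a_\alpha\otimes c^\alpha$ in the $\alpha$-notation, one unwinds, for $f,g\in\Hom_{A\text{-}}(\mathcal C,A)$ and $c\in C$,
\begin{align*}
  (g\cdot f)(1_A\otimes c)
   &= f\bigl((1_A\otimes c_{\underline 1})\cdot g(1_A\otimes c_{\underline 2})\bigr)
    = f\bigl(g(1_A\otimes c_{\underline 2})_\alpha\otimes c_{\underline 1}^{\,\alpha}\bigr) \\
   &= g(1_A\otimes c_{\underline 2})_\alpha\,f(1_A\otimes c_{\underline 1}^{\,\alpha}),
\end{align*}
the last equality by left $A$-linearity of $f$. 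Evaluating the defining formula for $\Theta(f)*\Theta(g)=\mu_A\circ(1_A\otimes\Theta(f))\circ\psi\circ(1_A\otimes\Theta(g))\circ\Delta_C$ at $c$ produces the very same element, so $\Theta(g\cdot f)=\Theta(f)*\Theta(g)$; since $g\cdot f$ is the product of $f$ and $g$ in $R$, this makes $\Theta$ multiplicative. For units, $\Theta(\varepsilon_{\mathcal C})=\bigl(c\mapsto\varepsilon_C(c)\,1_A\bigr)$, which is a two-sided unit for $*$ by a short computation using the counit axiom for $C$ together with entwining axioms (3) (for the right unit) and (4) (for the left unit); hence $\Theta$ is an isomorphism of $k$-algebras.

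Finally I would record that $\Theta$ matches up the canonical copies of $A$ on the two sides — the image of $b\mapsto\bigl(x\mapsto\varepsilon_{\mathcal C}(x)b\bigr)$ in $R$ and the image of $a\mapsto\bigl(c\mapsto\varepsilon_C(c)a\bigr)$ in $\Hom_\psi(C,A)$ — so that $\Theta$ is an isomorphism of algebras-with-a-distinguished-subalgebra, which is the form in which this lemma feeds into the proof of Theorem~\ref{thm:C}. The argument is bookkeeping throughout: the only hazard is keeping the $\alpha$-notation and the two occurrences of $(-)^{\op}$ straight, so no genuine obstacle arises.
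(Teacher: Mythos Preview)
Your argument is correct: the restriction isomorphism $\Theta$ is the standard one, and your chase of the product through $\Theta$ is accurate, including the use of the right $A$-action $(a\otimes c)a'=a\,\psi(c\otimes a')$ to bring $\psi$ into play and the handling of the two $(-)^{\op}$'s. The paper itself does not give a proof of this lemma at all---it simply cites 32.9 in \cite{CoringsBW}---so your direct verification supplies strictly more detail than the paper; the approach you take is the expected one and is essentially what one finds in that reference.
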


For a proof see 32.9 in \cite{CoringsBW}.

\begin{mydef}
  Let \(A,B\) be two \(k\)-algebras and let \(\Phi : B \otimes A \rightarrow A \otimes B\) be a map such that
  \begin{enumerate}
  \item \(\Phi(b \otimes 1_A) = 1_A \otimes b\);
  \item \(\Phi(1_B \otimes a) = a \otimes 1_B\);
  \item \(\Phi(bd \otimes a) = a_{\Phi\varphi} \otimes b_\varphi d_\Phi\);
  \item \(\Phi(b \otimes ac) = a_\Phi c_\varphi \otimes b_{\Phi\varphi}\); 
  \end{enumerate}
  where  \(-_{\Phi\varphi}\) is Sweedler like notation for iterated application of the map \(\Phi\). The \textit{smash product} \(A \#_\Phi B\) is the \(k\)-module \(A \otimes B\) with product
  \begin{equation*}
    (a \# b)(c \# d) = ac_\Phi \# b_\Phi d
  \end{equation*}
  and unit \(1 \# 1\). 
\end{mydef}

\begin{lemma}\label{lemma:smashproduct}
  Let \((C,A,\psi)\) be an entwining structure and let \(\mathcal{C} = A \otimes C\) be the associated coring. Then
  \begin{equation*}
    (C^{\vee})^{\op}\# A \cong \Hom_{\psi}(C,A)
  \end{equation*}
  as algebras. 
\end{lemma}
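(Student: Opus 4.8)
The plan is to transport the entwining map $\psi$ to a distributive law between $C^{\vee}$ and $A$, check that this produces exactly the algebra $(C^{\vee})^{\op}\#A$, and then identify that algebra with the right algebra of $\mathcal C=A\otimes C$, which by Lemma~\ref{lemma:rightalgebraent} is $\Hom_{\psi}(C,A)$. Using that $C$ is finite-dimensional (the situation in which this lemma is applied), the evaluation pairing gives a $k$-linear identification $C^{\vee}\otimes A\cong\Hom_{k}(C,A)$, and $\psi\colon C\otimes A\to A\otimes C$ transposes in the $C$-argument to a $k$-linear map
\[
\Phi\colon A\otimes C^{\vee}\longrightarrow C^{\vee}\otimes A,\qquad \Phi(a\otimes\xi)=\bigl[\,c\mapsto\xi(c^{\alpha})\,a_{\alpha}\,\bigr],
\]
in the $\alpha$-notation $\psi(c\otimes a)=a_{\alpha}\otimes c^{\alpha}$.

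First I would check that $\Phi$ satisfies the four axioms of the Definition of a smash product, so that $(C^{\vee})^{\op}\#_{\Phi}A$ is a well-defined associative unital algebra with underlying vector space $C^{\vee}\otimes A$. Each axiom is a transcription of one of the four entwining axioms through the pairing $C^{\vee}\otimes A\cong\Hom_{k}(C,A)$: the unit conditions $\Phi(a\otimes 1_{C^{\vee}})=1_{C^{\vee}}\otimes a$ and $\Phi(1_{A}\otimes\xi)=\xi\otimes 1_{A}$ are the entwining axioms $(\ide_{A}\otimes\varepsilon)\circ\psi=\varepsilon\otimes\ide_{A}$ and $\psi\circ(\ide_{C}\otimes\iota)=\iota\otimes\ide_{C}$; compatibility of $\Phi$ with the product of $A$ is $\psi\circ(\ide_{C}\otimes\mu)=(\mu\otimes\ide_{C})\circ(\ide_{A}\otimes\psi)\circ(\psi\otimes\ide_{A})$; and compatibility of $\Phi$ with the product of $(C^{\vee})^{\op}$, which involves $\Delta_{C}$ through the convolution of $C^{\vee}$, is $(\ide_{A}\otimes\Delta)\circ\psi=(\psi\otimes\ide_{C})\circ(\ide_{C}\otimes\psi)\circ(\Delta\otimes\ide_{A})$. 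This is the usual dictionary between entwining structures and smash-product (factorisation) data.

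Second I would produce the isomorphism by generators and relations. Inside $R\cong\Hom_{\psi}(C,A)$ there are two natural subalgebras: the image of $C^{\vee}$ under $\xi\mapsto[\,c\mapsto\xi(c)1_{A}\,]$, which is a homomorphism for the \emph{opposite} convolution product of $C^{\vee}$ and hence is the origin of the ${}^{\op}$ in the statement, and the canonical copy of the base $B=A$ (equivalently $A^{\op}$, depending on the convention) given by $a\mapsto[\,c\mapsto\varepsilon(c)a\,]$. A short computation with the twisted convolution product of Lemma~\ref{lemma:rightalgebraent}, the counit axioms of $C$, and the entwining unit axioms identifies the product in $R$ of $[\,c\mapsto\xi(c)1_{A}\,]$ with $[\,c\mapsto\varepsilon(c)a\,]$ as the map $c\mapsto\xi(c^{\alpha})a_{\alpha}$, that is, as $\Phi(a\otimes\xi)$. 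Since this straightening relation is precisely the one defining the smash product, the assignment $\xi\#a\mapsto[\,c\mapsto\xi(c^{\alpha})a_{\alpha}\,]$ is a well-defined algebra homomorphism $(C^{\vee})^{\op}\#_{\Phi}A\to\Hom_{\psi}(C,A)$, carrying the unit $1_{C^{\vee}}\#1_{A}$ to the unit $\iota_{A}\circ\varepsilon$ of $\Hom_{\psi}(C,A)$ (both equal to $c\mapsto\varepsilon(c)1_{A}$); on underlying vector spaces it is the pairing isomorphism $C^{\vee}\otimes A\cong\Hom_{k}(C,A)$ post-composed with an invertible triangular twist built from $\psi$ (invertible because the unit axioms make its leading term the identity), hence it is bijective.

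The part I expect to be the main obstacle is purely bookkeeping: pairing the four entwining axioms with the four smash-product axioms, and, more delicately, tracking the various ${}^{\op}$'s, i.e.\ deciding which of $C^{\vee}$ and $A$ (and which of them taken opposite) occupies the $A$-slot versus the $B$-slot of the smash product, and whether the convolution of $C^{\vee}$ is formed with $\Delta_{C}$ or with $\Delta_{C}^{\op}$. These choices are forced by insisting that the candidate map be multiplicative rather than anti-multiplicative, and a wrong choice of any of them makes the map fail on one of the two generating subalgebras. One should also record, as the printed statement does not, the standing hypothesis that $C$ be finite-dimensional, which is what makes $C^{\vee}\otimes A\to\Hom_{k}(C,A)$, and hence $\Phi$ and the isomorphism, well behaved.
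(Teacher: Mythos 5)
The paper offers no argument of its own here (it simply cites Proposition~37 of \cite{CaenepeelMilitaaruZhu2002}), so the question is whether your direct proof works, and there is a genuine gap concentrated in your choice of isomorphism. You propose the $\psi$-twisted map $\xi\#a\mapsto[c\mapsto\xi(c^{\alpha})a_{\alpha}]$ and justify its bijectivity by describing it as the canonical pairing composed with an ``invertible triangular twist built from $\psi$''; but there is no grading or filtration making this twist unipotent, and an entwining map $\psi$ (equivalently its partial transpose $\Phi$) is \emph{not} assumed invertible, so bijectivity of your map is unjustified in general. Multiplicativity is also not secured by the one computation you give. Writing $u_{\xi}=[c\mapsto\xi(c)1_{A}]$ and $f_{a}=[c\mapsto\varepsilon(c)a]$, it is true (with the product of Lemma~\ref{lemma:rightalgebraent}) that the product of $u_{\xi}$ with $f_{a}$ in one order is the twisted element $[c\mapsto\xi(c^{\alpha})a_{\alpha}]$; but to get a homomorphism out of the smash product you must also check the commutation relation in the \emph{other} order, namely that $f_{a}*u_{\eta}$ equals $\sum u_{\eta_{\Phi}}*f_{a_{\Phi}}$. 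A direct computation gives $f_{a}*u_{\eta}=[c\mapsto\eta(c)a]$, the \emph{untwisted} element, whereas $\sum u_{\eta_{\Phi}}*f_{a_{\Phi}}$ is a doubly twisted expression; these agree only under extra invertibility-type hypotheses on $\psi$, so your candidate map fails to intertwine the defining relations in general.

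The standard repair, and the argument behind the cited reference, is to take the \emph{plain} identification $\xi\#a\mapsto[c\mapsto\xi(c)a]$ (finite-dimensionality of $C$ gives $C^{\vee}\otimes A\cong\Hom_k(C,A)$, a hypothesis you rightly note must be added to the statement) and let the twist live entirely in the smash multiplication through $\Phi(a\otimes\xi)=[c\mapsto\xi(c^{\alpha})a_{\alpha}]$: one then checks directly that this untwisted map carries the smash product to the twisted convolution, with no invertibility of $\psi$ needed. Your first step (translating the four entwining axioms into the four axioms for $\Phi$) is fine and is indeed part of that proof. Be warned, however, that your bookkeeping of the opposites does not match the printed conventions: with the product of Lemma~\ref{lemma:rightalgebraent}, the map $\xi\mapsto u_{\xi}$ is a homomorphism for the \emph{ordinary} convolution of $C^{\vee}$, while $a\mapsto f_{a}$ is an \emph{anti}-homomorphism of $A$, so the ``origin of the op'' is not where you place it; reconciling the statement with the paper's right-algebra (versus left dual) conventions is exactly the delicate point, and it must be resolved with the untwisted map, not by twisting the identification.
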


For a proof, see Proposition 37 in \cite{CaenepeelMilitaaruZhu2002}.

\begin{mydef}
  Let \((A,C,\psi)\) be an entwining structure. The \(\psi\)\textit{-equivariant cohomology} \(\Hh_{\psi-e}^*(C)\) is defined to be the Cartier cohomology of the associated coring. We denote the cochain complex by \(C^*_{\psi-e}(C)\).   
\end{mydef}

As \(A \otimes C \cong A^{\oplus \dim_k C}\) as a left \(A\)-module, we get the following. 

\begin{lemma}\label{lemma:fingenproj}
  Let \((A,C,\psi)\) be an entwining structure. Then \(\mathcal{C} = A \otimes C\) is left finitely generated projective over \(A\) if \(C\) is a finite-dimensional \(k\)-module. 
\end{lemma}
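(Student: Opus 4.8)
The plan is to reduce the statement to the elementary fact that a finitely generated free module is finitely generated projective. The key observation, already recorded in the remark immediately preceding the statement, is that the left \(A\)-action on \(\mathcal{C} = A \otimes C\) is given by \(a'(a \otimes c) = a'a \otimes c\); that is, it only affects the first tensor factor and leaves \(C\) untouched. So the left module structure is insensitive to the coring structure, the comultiplication, and the \(\psi\)-twisted right action, and everything comes down to bookkeeping with a \(k\)-basis of \(C\).

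First I would pick a \(k\)-basis \(c_1, \dots, c_n\) of \(C\), which exists because \(C\) is finite-dimensional. Then the map
\begin{equation*}
  A^{\oplus n} \longrightarrow A \otimes_k C, \qquad (a_1, \dots, a_n) \longmapsto \sum_{i=1}^{n} a_i \otimes c_i,
\end{equation*}
is an isomorphism of left \(A\)-modules: it is bijective, since every element of \(A \otimes_k C\) has a unique expression of the form \(\sum_i a_i \otimes c_i\), and it is left \(A\)-linear because \(a' \cdot \sum_i a_i \otimes c_i = \sum_i a' a_i \otimes c_i\). Hence \(\mathcal{C} \cong A^{\oplus n}\) as left \(A\)-modules.

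Finally I would invoke that \(A^{\oplus n}\) is free of finite rank, hence finitely generated projective as a left \(A\)-module, and that both properties are preserved under isomorphism; equivalently, one may transport the obvious finite dual basis of \(A^{\oplus n}\) (the standard basis vectors together with the coordinate projections) along the isomorphism above to obtain a left finite dual basis for \(\mathcal{C}\). There is no real obstacle here: the lemma is essentially the precise version of the displayed remark \(A \otimes C \cong A^{\oplus \dim_k C}\) preceding it, and the only point worth stating explicitly is that the left \(A\)-module structure involves only the first tensor factor.
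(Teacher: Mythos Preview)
Your proposal is correct and matches the paper's approach exactly: the paper simply records that \(A \otimes C \cong A^{\oplus \dim_k C}\) as a left \(A\)-module and states the lemma without further argument. Your write-up just makes explicit the choice of basis and the isomorphism that the paper leaves implicit.
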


Thus, by combining Lemma~\ref{lemma:rightalgebraent}, Lemma~\ref{lemma:smashproduct}, Lemma~\ref{lemma:fingenproj} and Theorem~\ref{thm:coringtoalg}, we get the following

\begin{thm}\label{thm:equivariantcohomology}
  Let \((A,C,\psi)\) be an entwining structure such that \(C\) is finitely generated projective over \(k\). Then there is an isomorphism of \(B_{\infty}\)-algebras
  \begin{equation*}
    C^*_{\psi-e}(C) \cong C^*(\Hom_{\psi}(C,A)|A)^{\opp} \cong C^*((C^{\vee})^{\op}\# A | A)
  \end{equation*}
  which thus induces an isomorphism of Gerstenhaber algebras
  \begin{equation*}
    \Hh^*_{\psi-e}(C) \cong \HH^*(\Hom_{\psi}(C,A)|A)^{\opp} \cong \HH^*((C^{\vee})^{\op}\# A | A).
  \end{equation*}
\end{thm}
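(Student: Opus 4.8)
The plan is to identify the coring associated to \((A,C,\psi)\) with a left finitely generated projective coring, apply Theorem~\ref{thm:coringtoalg} to it, and then rewrite the right algebra of that coring using the two purely algebraic identifications of Lemmas~\ref{lemma:rightalgebraent} and~\ref{lemma:smashproduct}. So first I would set \(B = A\) and let \(\mathcal{C} = A \otimes C\) be the associated coring, so that by the definition of \(\psi\)-equivariant cohomology one has \(C^*_{\psi-e}(C) = C^*_{\Ca}(\mathcal{C})\) and \(\Hh^*_{\psi-e}(C) = \Hh^*_{\Ca}(\mathcal{C})\). Since \(k\) is a field, \(C\) being finitely generated projective over \(k\) is the same as \(C\) being finite-dimensional, so Lemma~\ref{lemma:fingenproj} gives that \(\mathcal{C}\) is finitely generated projective as a left \(A\)-module, and hence Theorem~\ref{thm:coringtoalg} yields a \(B_\infty\)-isomorphism \(C^*_{\Ca}(\mathcal{C}) \cong C^*(R|A)^{\opp}\), where \(R\) is the right algebra of \(\mathcal{C}\), which in turn induces the corresponding isomorphism \(\Hh^*_{\Ca}(\mathcal{C}) \cong \HH^*(R|A)^{\opp}\) of Gerstenhaber algebras.

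It then remains to transport the right-hand sides along algebra isomorphisms. Lemma~\ref{lemma:rightalgebraent} provides an algebra isomorphism \(R \cong \Hom_\psi(C,A)\) and Lemma~\ref{lemma:smashproduct} one of the form \(\Hom_\psi(C,A) \cong (C^\vee)^{\op}\# A\). The point I would verify is that both isomorphisms are isomorphisms of \(A\)-rings, i.e. that they intertwine the canonical embeddings of \(A\) (given by \(a \mapsto a\varepsilon_{\mathcal{C}}\) into \(R\), by \(a \mapsto (c \mapsto \varepsilon_C(c)a)\) into \(\Hom_\psi(C,A)\), and by \(a \mapsto \varepsilon_C \# a\) into \((C^\vee)^{\op}\# A\)); this is a direct unwinding of the maps in 32.9 of \cite{CoringsBW} and of Proposition 37 in \cite{CaenepeelMilitaaruZhu2002}. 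Granting this, any isomorphism of algebra extensions \((A \subseteq S) \cong (A \subseteq S')\) induces an equivalence of the monoidal categories of \(A\)-bimodules carrying the monoid \(S\) to the monoid \(S'\), hence an isomorphism of the associated endomorphism operads with multiplication \(\EndOp(S) \cong \EndOp(S')\), hence, via the functor \(\mathbf{mOp}_k \to \mathbf{B}_{\infty,\text{strict}}\) of Theorem~\ref{thm:OperadB} together with Lemma~\ref{lemma:strictMorphism}, a strict \(B_\infty\)-isomorphism \(C^*(S|A) \cong C^*(S'|A)\); since a strict \(B_\infty\)-morphism commutes with the \(m_n\) and the \(\mu_{pq}\), it also commutes with the signed relabellings \(\mu_{pq}^{\opp}\) of Definition~\ref{def:opBinfinity} and is therefore simultaneously a strict \(B_\infty\)-isomorphism of the opposite \(B_\infty\)-algebras. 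Applying this first with \(S = R\), \(S' = \Hom_\psi(C,A)\) and then with \(S = \Hom_\psi(C,A)\), \(S' = (C^\vee)^{\op}\# A\), and composing with the isomorphism from Theorem~\ref{thm:coringtoalg}, gives the displayed chain of \(B_\infty\)-isomorphisms, and passing to cohomology via Proposition~\ref{prop:BgivesG} gives the displayed chain of Gerstenhaber-algebra isomorphisms.

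I expect the main obstacle to be precisely the compatibility-with-\(A\) claim above: one has to track the explicit isomorphisms behind Lemmas~\ref{lemma:rightalgebraent} and~\ref{lemma:smashproduct} and confirm that they fix the common subalgebra \(A\), so that they induce isomorphisms of \emph{relative} (rather than absolute) Hochschild cochain complexes. Everything else is formal transport of structure along functors that have already been constructed in the paper.
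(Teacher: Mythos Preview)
Your proposal is correct and follows exactly the paper's approach: the paper's proof is the single sentence ``by combining Lemma~\ref{lemma:rightalgebraent}, Lemma~\ref{lemma:smashproduct}, Lemma~\ref{lemma:fingenproj} and Theorem~\ref{thm:coringtoalg}'', and you have simply spelled this out. Your additional care in checking that the algebra isomorphisms of Lemmas~\ref{lemma:rightalgebraent} and~\ref{lemma:smashproduct} are isomorphisms of \(A\)-rings (so that they induce isomorphisms of the \emph{relative} Hochschild cochain operads) is a point the paper leaves implicit but which is indeed needed.
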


As we have an isomorphism of \(B_{\infty}\)-algebras, we get the following.

\begin{cor}\label{cor:deformation}
  Let \(k\) be a field of characteristic zero. Let \((A,C,\psi)\) be an entwining structure such that \(C\) is finitely generated projective over \(k\). Then
  \begin{equation*}
    \begin{tikzcd}
      \mathbf{MC}((sC^*_{\psi-e}(C) \otimes (t),d,[-,-])) \ar[leftrightarrow]{r}{1:1} & A\text{-relative deformations of } \Hom_{\psi}(C,A) \\
      & A\text{-relative deformations of } (C^{\vee})^{\op}\# A. \ar[leftrightarrow, swap]{u}{1:1} 
    \end{tikzcd}
  \end{equation*}
\end{cor}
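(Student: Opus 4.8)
The plan is to obtain the two one-to-one correspondences by concatenating the $B_\infty$-level comparison of Section~4, specialised to the coring attached to the entwining structure, with the Maurer--Cartan dictionary of Section~2.8. Write $\mathcal{C} = A \otimes C$ for the coring associated to $(A,C,\psi)$; it is a coring over the base ring $A$, and by Lemma~\ref{lemma:fingenproj} it is finitely generated projective as a left $A$-module because $C$ is finite-dimensional. By Lemma~\ref{lemma:rightalgebraent} its right algebra is $R = \Hom_\psi(C,A)$, with $A$ as canonical subalgebra, and by definition $C^*_{\psi-e}(C) = C^*_{\Ca}(\mathcal{C})$.

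First I would invoke Corollary~\ref{cor:B-infty} with $B = A$ (equivalently, the $\Hom_\psi(C,A)$-part of Theorem~\ref{thm:equivariantcohomology}): there is a \emph{strict} $B_\infty$-isomorphism $C^*_{\psi-e}(C) = C^*_{\Ca}(\mathcal{C}) \cong C^*(R^{\op}|A^{\op})$. A strict $B_\infty$-isomorphism is, by Lemma~\ref{lemma:strictMorphism}, a dg algebra isomorphism commuting with every brace operation, hence in particular with $m_1$ and with $\mu_{1,1}$, so it is an isomorphism of the associated shifted dg Lie algebras; extending $k[[t]]$-linearly and restricting to the ideal $(t)$, it induces a bijection
\[
  \mathbf{MC}\big(sC^*_{\psi-e}(C) \otimes (t)\big) \;\xrightarrow{\ \sim\ }\; \mathbf{MC}\big(sC^*(R^{\op}|A^{\op}) \otimes (t)\big).
\]
Next, Proposition~\ref{prop:opMC} (with $B = A$) gives a bijection between $\mathbf{MC}(sC^*(R^{\op}|A^{\op}) \otimes (t))$ and $\mathbf{MC}(sC^*(R|A) \otimes (t))$, and Proposition~\ref{prop:deformationMC} identifies the latter with the set of $A$-relative deformations of $R = \Hom_\psi(C,A)$; the hypothesis $\Char k = 0$ is precisely what licenses this deformation-theoretic dictionary. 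Composing these three bijections produces the top correspondence of the statement.

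For the remaining correspondence I would use Lemma~\ref{lemma:smashproduct}, which supplies an algebra isomorphism $(C^{\vee})^{\op}\# A \cong \Hom_\psi(C,A)$. One checks that it carries the copy $1 \# A$ of $A$ in the smash product onto the canonical subalgebra $\{\,c \mapsto \varepsilon_C(c)a \mid a \in A\,\} \subseteq \Hom_\psi(C,A)$; hence it sends $A$-relative multiplications to $A$-relative multiplications and restricts to a bijection between $A$-relative deformations of $(C^{\vee})^{\op}\# A$ and $A$-relative deformations of $\Hom_\psi(C,A)$. Together with the top correspondence this gives the displayed triangle.

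The only steps that are not purely formal are the two compatibility checks: that the strict $B_\infty$-isomorphism of Corollary~\ref{cor:B-infty} genuinely descends to a $k[[t]]$-linear dg Lie isomorphism preserving the degree-one Maurer--Cartan locus, and that the algebra isomorphism of Lemma~\ref{lemma:smashproduct} respects the subalgebra $A$ on the nose; both are routine but worth recording. Everything else is a direct concatenation of Proposition~\ref{prop:deformationMC}, Proposition~\ref{prop:opMC} and Theorem~\ref{thm:equivariantcohomology}.
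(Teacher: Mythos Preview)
Your proposal is correct and follows essentially the same route as the paper's own proof: invoke Corollary~\ref{cor:B-infty} to obtain the strict $B_\infty$-isomorphism $C^*_{\psi-e}(C)\cong C^*(\Hom_\psi(C,A)^{\op}|A^{\op})$, deduce the induced isomorphism of dg Lie algebras and hence of Maurer--Cartan sets, pass through Proposition~\ref{prop:opMC} and Proposition~\ref{prop:deformationMC} to reach $A$-relative deformations of $\Hom_\psi(C,A)$, and conclude via Lemma~\ref{lemma:smashproduct}. Your version is slightly more detailed in that you explicitly record the compatibility of the smash-product isomorphism with the embedded copy of $A$, which the paper leaves implicit.
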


\begin{proof}
  As we have an strict \(B_{\infty}\)-isomorphism \(C^*_{\psi-e}(C) \cong C^*(\Hom_{\psi}(C,A)^{\op}|A^{\op})\) by Corollary~\ref{cor:B-infty}, we get an induced isomorphism
  \begin{equation*}
    sC^*_{\psi-e}(C) \otimes (t) \cong sC^*(\Hom_{\psi}(C,A)^{\op}|A^{\op}) \otimes (t)
  \end{equation*}
  of dg Lie algebras. Therefore, we have that
  \begin{equation*}
    \mathbf{MC}(sC^*_{\psi-e}(C) \otimes (t)) \cong \mathbf{MC}(sC^*(\Hom_{\psi}(C,A)^{\op}|A^{\op}) \otimes (t))
  \end{equation*}
  but \(\mathbf{MC}(sC^*(\Hom_{\psi}(C,A)^{\op}|A^{\op}) \otimes (t)) = \mathbf{MC}(sC^*(\Hom_{\psi}(C,A)|A) \otimes (t))\) is in one-to-one correspondence with \(A\)-relative deformations of \(\Hom_{\psi}(C,A)\) by Proposition~\ref{prop:opMC}. As \(\Hom_{\psi}(C,A) \cong (C^{\vee})^{\op} \# A\) by Lemma~\ref{lemma:smashproduct}, the last part follows. 
\end{proof}

Let \((A,C,\psi)\) be the trivial entwining structure. For example, if \(A=C=kG\) of a finite abelian group, then \(\psi_{\mathcal{YD}} = \psi_{\text{triv}}\). Therefore, we get that the smash product \((C^{\vee})^{\op} \# kG\) is isomorphic to \((C^{\vee})^{\op} \otimes A\) with the usual multiplication.

\begin{prop}
  Let \(k\) be an algebraically closed field of arbritrary characteristic and let \(A\) and \(C\) be finite dimensional. Then 
  \begin{equation*}
    \Hh_{\psi_{\text{triv}}\text{-}e}(H) \cong \Hh_{\Ca}^*(C) \otimes A
  \end{equation*}
  of Gerstenhaber algebras. 
\end{prop}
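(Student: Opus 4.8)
The plan is to recognise $\Hh^{*}_{\psi_{\text{triv}}\text{-}e}(C)$ as the Cartier cohomology of a coring obtained from $C$ by base change along $k\to A$, and then to split off the $A$-contribution via Theorem~\ref{thm:tensorDecomp}. Write $\psi=\psi_{\text{triv}}$ for the flip map. The associated coring is $\mathcal{C}=A\otimes C$ with $A$ acting on the first tensor factor only; equivalently $\mathcal{C}=A\otimes_{k}C$ carries the outer $A$-bimodule structure, comultiplication $\ide_{A}\otimes\Delta_{C}$ and counit $\ide_{A}\otimes\varepsilon_{C}$. Since $C$ is finite-dimensional, $\mathcal{C}$ is left finitely generated projective over $A$ by Lemma~\ref{lemma:fingenproj}, and by Lemma~\ref{lemma:rightalgebraent} together with Lemma~\ref{lemma:smashproduct} its right algebra is $\Hom_{\psi}(C,A)\cong(C^{\vee})^{\op}\#A$, which for the flip entwining is the ordinary tensor product $(C^{\vee})^{\op}\otimes_{k}A$ of $k$-algebras --- as recorded in the paragraph preceding the statement --- with $A=1\otimes A$ as canonical subalgebra. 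Hence Theorem~\ref{thm:coringtoalg} gives an isomorphism of Gerstenhaber algebras
\begin{equation*}
  \Hh^{*}_{\psi_{\text{triv}}\text{-}e}(C)=\Hh^{*}_{\Ca}(\mathcal{C})\;\cong\;\HH^{*}\bigl((C^{\vee})^{\op}\otimes_{k}A\,\big|\,A\bigr)^{\opp}.
\end{equation*}

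Next I would split the right-hand side using Theorem~\ref{thm:tensorDecomp}, applied to the $k$-algebra extensions $k\hookrightarrow(C^{\vee})^{\op}$ and $\ide\colon A\to A$: the algebra $(C^{\vee})^{\op}\otimes_{k}A$ with its subalgebra $k\otimes_{k}A=A$ is exactly the tensor product of these two extensions, so that theorem yields an isomorphism of Gerstenhaber algebras
\begin{equation*}
  \HH^{*}\bigl((C^{\vee})^{\op}\otimes_{k}A\,\big|\,A\bigr)\;\cong\;\HH^{*}\bigl((C^{\vee})^{\op}\bigr)\otimes\HH^{*}(A\,|\,A).
\end{equation*}
The factor $\HH^{*}(A\,|\,A)$ is immediate: every $A$-bimodule $N$ is isomorphic to $A\otimes_{A}N\otimes_{A}A$, hence is $(A^{e}|A^{e})$-relative projective, so the relative bar resolution of $A$ over itself collapses to $A$ in degree $0$; therefore $\HH^{*}(A\,|\,A)$ is concentrated in degree $0$, where it equals $\Hom_{A^{e}}(A,A)=Z(A)$ with cup product the multiplication of $Z(A)$ and vanishing bracket. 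In particular $\HH^{*}(A\,|\,A)=A$ as soon as $A$ is commutative, as in the motivating example $A=kG$ with $G$ abelian. The second factor is handled by Theorem~\ref{thm:coringtoalg} once more, now for $C$ regarded as a finite-dimensional $k$-coring with right algebra $(C^{\vee})^{\op}$: it gives $\Hh^{*}_{\Ca}(C)\cong\HH^{*}\bigl((C^{\vee})^{\op}\bigr)^{\opp}$, i.e.\ $\HH^{*}\bigl((C^{\vee})^{\op}\bigr)\cong\Hh^{*}_{\Ca}(C)^{\opp}$.

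Assembling the isomorphisms, $\Hh^{*}_{\psi_{\text{triv}}\text{-}e}(C)\cong\bigl(\Hh^{*}_{\Ca}(C)^{\opp}\otimes Z(A)\bigr)^{\opp}$; since taking the opposite of a Gerstenhaber algebra commutes with tensor products and $Z(A)^{\opp}=Z(A)$ (its bracket vanishing for degree reasons), the two opposites on the Cartier factor cancel and one obtains $\Hh^{*}_{\psi_{\text{triv}}\text{-}e}(C)\cong\Hh^{*}_{\Ca}(C)\otimes Z(A)$, which is $\Hh^{*}_{\Ca}(C)\otimes A$ exactly in the commutative case. The step that demands care --- and the main potential source of error --- is precisely this bookkeeping of opposite Gerstenhaber structures: one opposite enters from Theorem~\ref{thm:coringtoalg} for $\mathcal{C}$, a second is hidden in the presentation $(C^{\vee})^{\op}$ of the right algebra and is released by Theorem~\ref{thm:coringtoalg} for $C$, a third lives on the $Z(A)$-factor, and one must check that, together with the signs in the tensor-product Gerstenhaber bracket, they compose to the identity on $\Hh^{*}_{\Ca}(C)$. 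Everything else is a routine application of Lemmas~\ref{lemma:rightalgebraent}, \ref{lemma:smashproduct}, \ref{lemma:fingenproj} and Theorems~\ref{thm:coringtoalg}, \ref{thm:tensorDecomp}. One could also sidestep the opposites by verifying directly that the coendomorphism operad $\CoEndOp(\mathcal{C})$ computing $C^{*}_{\Ca}(\mathcal{C})$ (Proposition~\ref{prop:CartierBinfinity}) splits as $Z(A)\otimes\CoEndOp(C)$ as an operad with multiplication --- using that $A\otimes_{k}C$ is a finite direct sum of copies of the regular $A$-bimodule --- whence $C^{*}_{\Ca}(\mathcal{C})\cong Z(A)\otimes C^{*}_{\Ca}(C)$ as $B_{\infty}$-algebras and the statement follows on cohomology, at the cost of the operadic compatibility check in place of the sign bookkeeping.
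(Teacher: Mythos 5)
Your argument follows essentially the same route as the paper's proof: apply Theorem~\ref{thm:equivariantcohomology} (equivalently Theorem~\ref{thm:coringtoalg} together with Lemmas~\ref{lemma:rightalgebraent}, \ref{lemma:smashproduct}, \ref{lemma:fingenproj}) and then the tensor decomposition of Theorem~\ref{thm:tensorDecomp}, identifying \(\HH^*((C^{\vee})^{\op})^{\opp}\) with \(\Hh^*_{\Ca}(C)\) and \(\HH^*(A|A)\) with \(Z(A)\). Your added care with the opposite-bookkeeping, and your observation that the natural conclusion is \(\Hh^*_{\Ca}(C)\otimes Z(A)\) (agreeing with \(\Hh^*_{\Ca}(C)\otimes A\) only when \(A\) is commutative, as in the motivating example \(A=kG\) with \(G\) abelian), is consistent with the paper, whose own proof also ends at \(Z(A)\) without further comment.
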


\begin{proof}
  By Theorem~\ref{thm:equivariantcohomology}, there is an isomorphism of Gerstenhaber algebras 
  \begin{equation*}
    \Hh_{\psi-e}(C) \cong \HH^*((C^{\vee})^{\op} \otimes A| k \otimes A)^{\op}.
  \end{equation*}
  By Proposition~\ref{thm:tensorDecomp}, we get that
  \begin{equation*}
    \HH^*((C^{\vee})^{\op} \otimes A|k \otimes A)^{\op} \cong \HH^*((C^{\vee})^{\op})^{\op} \otimes \HH^*(A|A)^{\op} \cong \Hh_{\Ca}^*(C) \otimes Z(A).
  \end{equation*}
  Thus the result follows. 
\end{proof}

Let \(k\) be a field of characteristic \(p > 0\), let \(G\) be a finite abelian group whose order does divide \(p\) and let \(H=kG\). Then \(kG = kC_{p_1^{r_1}} \otimes ... \otimes kC_{p_1^{r_n}} \otimes L\) be the decomposition of \(G\) into cyclic groups and a group whose order does not divide \(p\). Let \((kG,kG,\psi_{\text{triv}})\) be the trivial entwining structure. Then
\begin{equation*}
  \Hh_{\psi_{\text{triv}}\text{-}e}(kG) \cong \HH^*(kC_{p^{r_1}}) \otimes ... \otimes \HH^*(kC_{p^{r_n}}) \otimes \HH^*(L) \otimes kG
\end{equation*}
Thus, following Section 4 in \cite{LeZhou2014}, one can completely describe \(\Hh^*_{\psi_{\text{triv}}\text{-}e}(kG)\) for any abelian group \(G\).

\printbibliography
\end{document}